\newtheorem{lemma}{Lemma}[section]
\newtheorem{corollary}{Corollary}[section]
\newtheorem{theorem}{Theorem}[section]
\newtheorem{proposition}{Proposition}[section]
\newtheorem{definition}{Definition}[section]
\newtheorem{remark}{Remark}[section]
\newtheorem{proof}{Proof}[section]
\begin{document}
\linespread{1}
\begin{frontmatter}
\title{A mixed multifractal analysis for quasi Ahlfors vector-valued measures}
\author{Adel Farhat}
\address{Labo. of Algebra, Number Theory and Nonlinear Analysis LR18ES50, Department of Mathematics, Faculty of Sciences, Monastir 5000, Tunisia.}
\ead{farhatadel1222@yahoo.fr}
\author{Anouar Ben Mabrouk\corauthref{cor1}\thanksref{label1}}
\address{Department of Mathematics, Higher Institute of Applied Mathematics and Computer Science, Street of Assad Ibn Alfourat, 3100 Kairouan, Tunisia.\\
Lab. Algebra, Number Theory and Nonlinear Analysis, UR11ES50, Department of Mathematics, Faculty of Sciences, 5000 Monastir, Tunisia.}
\ead{anouar.benmabrouk@fsm.rnu.tn}
\thanks[label1]{{Department of Mathematics, Faculty of Science, University of Tabuk, Saudi Arabia.}}
\corauth[cor1]{Corresponding author.}
\begin{abstract}
The multifractal formalism for measures in its original formulation is checked for special classes of measures such as doubling, self similar, and Gibbs-like ones. Out of these classes, suitable conditions should be taken into account to prove the validity of the multifractal formalism. In the present work, a large class of measures satisfying a weak condition known as quasi Ahlfors is considered in the framework of mixed multifractal analysis. A joint multifractal analysis of finitely many quasi Ahlfors probability measures is developed. Mixed variants of multifractal generalizations of Hausdorff and packing measures, and corresponding dimensions are introduced. By applying convexity arguments, some properties of these measures and dimensions are established. Finally, an associated multifractal formalism is introduced and proved to hold for the class of quasi Ahlfors measures. 
\end{abstract}
\begin{keyword}
Hausdorff measure and dimension, packing measure and dimensions, Multifractal formalism, mixed cases, Ahlfors measures.
\PACS: 28A78, 28A80.
\end{keyword}
\end{frontmatter}
\maketitle
\section{Introduction}
The present work is devoted to the topic of multifractal analysis of measures and the validity of multifractal formalism. We precisely focus on the simultaneous behaviors of finitely many measures instead of a single measure as in the classical or original multifractal analysis of measures. We call such a study mixed multifractal analysis. Such a mixed analysis has been generating a great attention recently and thus proved to be powerful in describing the local behavior of measures especially fractal ones. (See \cite{BarreiraSaussol}, \cite{Bmabrouk1}, \cite{Bmabrouk3}, \cite{olsen1}, \cite{olsen2}, \cite{olsen2c}, \cite{olsen2d}, \cite{olsen2e}, \cite{Qu}, \cite{Xu-Wang}, \cite{Xu-Xu}, \cite{Xu-Xu-Zhong}, \cite{Yuan}, \cite{Zeng-Yuan-Xui}, \cite{Zhou-Feng}, \cite{Zhu-Zhou}).

In this paper, multi purposes will be done. Firstly we introduce the general context in which the new joint multifractal analysis will be developed. We introduce firstly some new variants of Hausdorff and packing measures based on some vector-valued measures controlled by a common measure assumed to be quasi-Ahlfors. Next, in the same section, associated multifractal dimensions due to the introduced measures are defined. Eventual relations between joint Hausdorff measures, joint packing measures, joint Hausdorff dimensions, joint packing dimensions as well as joint variants of Renyi dimensions are used. A second aim is to introduce a corresponding multifractal spectrum/formalism permitting to investigate the behavior of finitely many measures simultaneously. 

As it is noticed from the literature on multifractal analysis of measures, this latter always considered a single measure and studies its scaling behavior as well as the multifractal formalism associated. Recently, many works have been focused on the study of simultaneous behaviors of finitely many measures. In \cite{olsen3}, a mixed multifractal analysis has been developed dealing with a generalization of R\'enyi dimensions for a finite set of self similar measures, constituting thus a first motivation to our present paper. We intend to combine the generalized Hausdorff and packing measures, and the corresponding dimensions with Olsen's results in \cite{olsen2e} to define and develop a more general multifractal analysis for finitely many measures by studying their simultaneous regularity, spectrum and to define a mixed multifractal formalism which may describe better the geometry of the singularities's sets of these measures. We apply the techniques of L. Olsen especially in \cite{olsen1} and \cite{olsen2e} with the necessary modifications to study in details general mixed dimensions of simultaneously finite number of measures one of them at least is characterized by a quasi-Ahlfors property. Our results May be next shown to valid easily for a single measure, which confirms their generecity.

The assumption of being Ahlfors for one of the measures is essential contrarily to some existing works that have omitted such assumption and developed some questionable version of multifractal densities (\cite{Attiaetal,Cole,Douzi-Selmi}). Indeed, in such references the authors referred to \cite{Billingsley} to show that a real-valued dimension may exist without a link between the measure of a ball and its diameter. However, the authors did not pay attention to the fact that general probability measures (although being doubling) may not lead to multifractal dimensions. Indeed, it is assumed in \cite{Billingsley} (but not in \cite{Attiaetal} and \cite{Cole}) that 
\begin{itemize}
\item both the lower bound taken in the definition of the $\mu$-Hausdorff measure and the upper bound taken in the $\mu$-packing measure, may extend on all $\mu$-$\rho$-coverings and packings whenever $\mu$ is a Borel probability measure. Besides, a $\mu$-$\rho$-covering is already a covering by cylinders $C$ satisfying $\mu(C)<\rho$, where $\rho$ is the diameter or the radius.
\item The measure $\mu$ is assumed to be nonatomic. Otherwise, the set of $\mu$-$\rho$-coverings may be empty. 
\end{itemize}
Consequently, one may check for \cite{Attiaetal} and \cite{Cole} the validity of the multifractal formalism in a general case without any assumption on the measure of balls and their diameters. Relatively to \cite{olsen1}, where the measure $\nu$ is equivalent to a Lebesgue's one, the results obtained in \cite{Attiaetal} and \cite{Cole} are different. We proposed in the present work to relax hypothesis on the applied measures and assume a weak form of the so-called Alhfors measures. Backgrounds on such measures may be found in \cite{Edgar}, \cite{Jarvenpaaetal}, \cite{Mattila}, \cite{MattilaSaaramen}, \cite{Pajot}, \cite{Pesin-Weiss}.
\begin{definition}\label{quasiahlforsmeasure} Let $\nu$ be a Borel probability measure on $\mathbb{R}^d$. We say that $\nu$ is a quasi-Ahlfors measure with regularity index $\alpha>0$ if it satisfies
$$
\displaystyle\limsup_{|U|\longrightarrow0}\displaystyle\frac{\mu(U)}{|U|^{\alpha}}<+\infty.
$$
We say that $\nu$ is an Ahlfors measure with regularity index $\alpha>0$ if it satisfies
$$
0<\displaystyle\liminf_{|U|\longrightarrow0}\displaystyle\frac{\mu(U)}{|U|^{\alpha}}\leq\displaystyle\limsup_{|U|\longrightarrow0}\displaystyle\frac{\mu(U)}{|U|^{\alpha}}<+\infty.
$$
In the rest of the paper, we will write $\mathcal{QAHP}(\mathbb{R}^{d})$ (respectively, $\mathcal{AHP}(\mathbb{R}^{d})$) to designate the set of quasi-Ahlfors (respectively, Ahlfors) probability measures on $\mathbb{R}^d$.
\end{definition}
With these assumptions, it becomes natural to extend the multifractal analysis to the cases introduced in \cite{Attiaetal} and \cite{Cole}. The readers may refer to \cite{Billingsley}, \cite{Das}, \cite{Menceuretal}, \cite{olsen1}, \cite{olsen2}, \cite{Qu}, \cite{Xu-Wang}, \cite{Xu-Xu}, \cite{Xu-Xu-Zhong}, \cite{Ye1}, \cite{Ye1}, \cite{Yuan}, \cite{Zeng-Yuan-Xui}, \cite{Zhou-Feng}, \cite{Zhu-Zhou} for more understanding.

Resuming, mixed multifractal analysis is a natural extension of multifractal analysis of single objects such as measures, functions, statistical data, distributions, ... It is developed quite recently (since 2014) in the pure mathematical point of view. In physics and statistics, it was appearing on different forms but not really and strongly linked to the mathematical theory. See for example \cite{Ganetal}, \cite{Maneveauetal}. In many applications such as clustering topics, each attribute in a data sample may be described by more than one type of measure. This leads researchers to apply measures well adopted for mixed-type data. See for example \cite{Ganetal}.  

In section 2, we address a brief review on single/mixed multifractal analysis. We precisely focus on eventual applications, existing works and possible future direction, already with enlightening the link to our work in the present paper and the novelty relatively to existing works in the same topic. In section 3, we introduce the new context of the joint multifractal analysis. We introduce the new joint extensions of Hausdorff and packing measures and we show that they associate as for the classical case some multifractal dimensions to sets. We prove some properties of convexity, monotony, upper and lower bounds for these dimensions. Next, section 4 is devoted to the development of an associated multifractal spectrum relative to the new dimensions and measures developed previously. An upper bound of such a spectrum is provided. In section 5, we develop an associated multifractal formalism issued from the previous framework. Results on the validity of such formalism are established as well as an application to the case of projections of measures. Section 6 is a conclusion, in which we  resume the results obtained, and raise eventual extensions and applications.
\section{On the utility of mixed multifractal analysis and motivations}
In a simple description, mixed multifractal analysis may be defined as mathematical glasses that permit to capture and/or to quantify the transient higher-order dependence beyond correlation of many measures, functions, time series, and distributions.

Multifractal analysis has now applications and/or links to many cases in AI. It is indeed used in smart grids such as fractal ones, smart cities, buildings, nano-materials such as nano-circuits, patterns, complexity in nature, Patterns, biosignals, bioimages, ...etc. The bibliography in such topics is growing up rapidly. Interested readers may refer to the further reading section (\cite{Cao,Cattani2007,Chandrasekhar,Dauphine,Ghanbarian,Ghosh,Janahmadov,Nakayama,Ouadfeul,Seuront,Tolotti,ZhangBook}) for more information.

Mixed multifractal analysis (MXMA) has been applied in explaining joint movements in volatility for asset markets such as joint multifractal Markov-switching models. Besides, mixed multifractal analysis is not really new in financial series processing. It has been in contrast merged under the name of multivariate multifractal analysis (MVMA), where many situations in financial markets, and their volatility have been described. Multivariate models have been also applied for long memory with  mixture distributions. See for instance \cite{Calvet-Fisher,Drozdz,Fan,Liu,Liu1}.

Mixed multifractal analysis of measures has been applied in \cite{Dai} to extract the properties of stock index series, and in exploiting the eventual inner relationship that relates them. 

Mixed multifractal analysis has been also applied in the last decade to generalize the so-called detrended fluctuation analysis, which is applied widely for modeling time series. Mixed multifractal analysis has been emerged in such a field to model the nonlinear correlation in time series and/or their volatility to understand better the dependence between markets' indices. See \cite{BenMabroukDFA,Biswas,Jiang,Manshour}.

Recently, joint multifractal analysis, another face of mixed multifractal analysis, has been applied in economic-environmental linkage of indices. In \cite{Bozkus}, the dynamics of atmospheric carbon emissions and industrial production index have been investigated by means of joint multifractal analysis for both short and long terms. Already in the study of environmental indices and/or climate factors, a mixed multifractal study has been developed in \cite{Chu} to understand the thermal structure of the mixed layer in sea surface. Recall that temperature changes are basic causes of the well-known NINO phenomenon detected on sea surfaces. 

In \cite{Wendt}, multivariate multifractal analysis has been applied to texture characterization of natural images. The MVMA shows that image intensity may be subject to fluctuated regularity. In \cite{Wang}, MVMA has been applied for characterizing volatility and cross-correlation for agricultural futures' markets. Several statistical measures such as autocorrelation, coupling correlation as well as cross-correlation for some Chicago board of trade series have been estimated via the MVMA to decide about the state of markets.

In relation to AI applications of fractals and multifractals, Castiglioni and Faini addressed in \cite{Castiglioni-Faini} a multivariate multifractal analysis for biomedical applications by developing a fast DFA algorithm on maximally overlapped blocks suitable for short and long time series due to physiological applications. See also \cite{Wang1}. 

From a theoretical point of view, in \cite{DouziERA}, a mixed multifractal analysis has been developed to detect mutual singularities of the multifractal Hausdorff measure $\mathcal{H}_{\mu}^{q,t}$ and the packing measure $\mathcal{P}_{\mu}^{q,t}$. This differs from our work as we seek here the mutual singularities for the measure $\mu$ included in the definition of the Hausdorff/packing measure and not the singularities of $\mathcal{H}_{\mu}^{q,t}$ and $\mathcal{P}_{\mu}^{q,t}$. 

In \cite{Attia-Selmi1}, a mixed multifractal analysis has been developed for box dimension of measures. Recall that box dimensions may coincide with the Hausdorff and packing ones under suitable conditions such as the compactness of the metric space. In our work, we get such coincidence without restricting to compact metric spaces. See also \cite{DouziSamtiSelmi,SelmiProy2021}. In \cite{DouziATA}, the authors applied the mixed multifractal analysis of measures to estimate the R\'enyi dimension relative to a couple of measures where the product of such measures is controlled by the diameter/radius of the coverings. In \cite{SelmiActa}, some known concepts about the multifractal analysis are revisited in the framework of a relative multifractal analysis. New simplified proofs and examples have been provided.

More about the use of mixed multifractal analysis of measures as well as functions and time series or images may be found in \cite{Abry,Avishek,Dai1,Kinnison,Oral} and the references therein. 
\section{New joint multifractal measures and associated dimensions} 
In this section, we propose to develop the general framework of the joint multifractal analysis. General joint variants of both Hausdorff and packing measures will be introduced as well as the associated dimensions of sets.

Let $\mathcal{P}(\mathbb{R}^{d})$ be the set of all probability Borel measures on $\mathbb{R}^{d}$. For a single or vector-valued measure $\mu$ denote $S_\mu$ its topological support. Let $k\in\mathbb{N}$ be fixed and $\mu=(\mu_{1},\mu_{2},...,\mu_{k})\in\mathcal{P}(\mathbb{R}^{d})^{k}$. Let also $q=(q_{1},q_{2},...,q_{k})\in\mathbb{R}^{k}$, $x\in\mathbb{R}^d$ and $r>0$. We denote
$$
\left[\mu(B(x,r))\right]^{q}=\left[ \mu_{1}(B(x,r))\right] ^{q_{1}}\times
...\times \left[ \mu_{k}(B(x,r))\right] ^{q_{k}},
$$
where we designate by $B(x,r)$ the ball of center $x$ and radius $r$. Next, given $E\subseteq\mathbb{R}^d$ and $\varepsilon>0$, we call an $\varepsilon$-covering of $E$ any finite or countable set ($U_{i})_{i}$ of non-empty subsets $U_{i}\subseteq\mathbb{R}^d$ satisfying
\begin{equation}\label{epsiloncovering}
 E\subseteq{\cup}_iU_{i}\hbox{ and }|U_{i}|<\varepsilon,
\end{equation}
where $|.|$ is the diameter. 

The last assumption is already assumed in \cite{Billingsley}, \cite{Bmabrouk1}, \cite{Bmabrouk3}, \cite{Menceuretal}, \cite{olsen1}, \cite{olsen2}, \cite{olsen2b}, \cite{olsen2c}, \cite{olsen2d}, \cite{olsen3} but unfortunately not in \cite{Attiaetal} and \cite{Cole}. When assuming that the measure is quasi-Ahlfors, this assumption is not necessary and may be replaced by the original one in \cite{Billingsley} on $\mu$-$\varepsilon$-coverings.
\begin{definition} Given $E\subseteq\mathbb{R}^{d}$ and $\varepsilon >0$, we call an $\mu$-$\varepsilon$-covering of $E$ any at most countable set $(U_{i})_{i}$ of non-empty subsets $U_{i}\subseteq\mathbb{R}^{d}$ satisfying
$$
E\subseteq{\cup}_iU_{i}\hbox{ and }\mu(U_{i})<\varepsilon .
$$
For a subset $E\subseteq\mathbb{R}^{d}$ and $\varepsilon>0$, a countable set $(B(x_{i},r_{i}))_{i}$ of balls in $\mathbb{R}^{d}$ is called a centered $\varepsilon$-$\mu$-covering of $E$ if
$$
E\subseteq \underset{i}{\cup }B(x_{i},r_{i}),\text{ }x_{i}\in E\text{ and }%
0<\mu (B(x_{i},r_{i}))<\varepsilon \text{ for all }i.
$$
\end{definition}
\begin{definition} Let $\mu=(\mu_{1},\mu_{2},...,\mu_{k})$ be a positive vector-valued measure on $E=E_{1}\times E_{2}\times ...\times E_{k}$. Then the support of $\mu$ is the complement of the largest open subset $U$ of $E$ such that $\mu(U)=(0,0,...,0)$.
\end{definition}
\begin{theorem}\textbf{(Besicovitch covering theorem)}
There exists a fixed $\mathcal{N}_B\in\mathbb{N}$ satisfying the following assertion: For any $A\subset\mathbb{R}^{d}$ and any bounded set $(r_{x})_{x\in A}\subset(0,\infty)$, there exists ${N}_B$ subsets $C_{1},C_{2},...,C_{N_B}$ of $\left\{B(x,r_{x}),\,x\in A\right\}$, such that
\begin{enumerate}
\item Each set $C_{i}$ is finite or countable, for all $i=1,...,N_B$.
\item Each $C_{i}$ is composed of disjoint sets, for all $i=1,...,{N}_B$.
\item $A\subseteq\displaystyle\bigcup_{i=1}^{N_B}\displaystyle\bigcup_{B\in C_i}B$.
\end{enumerate}
\end{theorem}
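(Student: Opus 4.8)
The plan is to prove this purely geometric statement by the classical Besicovitch selection-and-coloring scheme, which is independent of the measure-theoretic machinery set up earlier; the constant $N_B$ will depend only on the dimension $d$. First I would reduce to the case of a bounded radius function: writing $R=\sup_{x\in A}r_x<\infty$ (finite by hypothesis), I would split $A$ into the dyadic layers on which $R2^{-(n+1)}<r_x\le R2^{-n}$ and treat the possibly unbounded geometry of $A$ by an exhaustion argument, so that it suffices to run the selection on pieces with comparable, bounded radii and then reassemble.

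The core is a \emph{greedy selection procedure}. Having selected balls $B(x_1,r_{x_1}),\dots,B(x_{j-1},r_{x_{j-1}})$, I set $A_j=A\setminus\bigcup_{i<j}B(x_i,r_{x_i})$ and, if $A_j\neq\emptyset$, choose $x_j\in A_j$ whose radius is within a fixed factor of the supremum of the remaining radii, say $r_{x_j}>\tfrac34\sup_{x\in A_j}r_x$. Two elementary consequences drive everything. Since each new center avoids all earlier balls, $|x_i-x_j|\ge r_{x_i}$ for $i<j$; and since $x_j$ was already available at step $i$, the radii are quasi-monotone, $r_{x_i}>\tfrac34 r_{x_j}$ for $i<j$. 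From these one checks that the shrunken balls $B(x_i,r_{x_i}/3)$ are pairwise disjoint, and that the selected balls cover $A$: if some $x\in A$ were never selected, its fixed radius $r_x>0$ would remain available while the selected radii are forced to zero by the disjoint-packing bound inside any bounded region, a contradiction.

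The heart of the argument --- and the step I expect to be the main obstacle --- is the \textbf{bounded-intersection lemma}: for each $j$ the number of indices $i<j$ with $B(x_i,r_{x_i})\cap B(x_j,r_{x_j})\neq\emptyset$ is at most a dimensional constant $N_B-1$. I would prove this by separating the intersecting earlier balls into those of comparable radius, $r_{x_i}\le 3r_{x_j}$, and those of much larger radius, $r_{x_i}>3r_{x_j}$. For the comparable case the centers lie in a fixed multiple of $B(x_j,r_{x_j})$ while carrying disjoint concentric balls of radius $\gtrsim r_{x_j}$, so a volume (packing) estimate bounds their number by a power of the dimension. The delicate case is the large-radius one: here one must show, via the law of cosines together with the facts $x_j\notin B(x_i,r_{x_i})$ and that the balls meet $B(x_j,r_{x_j})$, that the directions $x_i-x_j$ are pairwise separated by an angle bounded below by some $\theta_0(d)>0$; since only boundedly many unit vectors in $\mathbb{R}^d$ can be pairwise $\theta_0$-separated, this again yields a dimensional bound. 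This angular estimate is the genuinely nontrivial geometric input of the theorem.

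Finally I would convert the bounded-intersection lemma into the required partition by a greedy coloring of the (countable) selected family: process the balls $B(x_1,r_{x_1}),B(x_2,r_{x_2}),\dots$ in order and assign to $B(x_j,r_{x_j})$ any color in $\{1,\dots,N_B\}$ not used by the at most $N_B-1$ earlier balls meeting it. Collecting the balls of each color gives the subfamilies $C_1,\dots,C_{N_B}$: each is at most countable (item 1), consists of pairwise disjoint balls by construction (item 2), and their union coincides with the union of all selected balls, which covers $A$ (item 3). The only remaining bookkeeping is to verify that the layering reduction of the first step does not inflate the final constant beyond a dimensional bound, which holds because the coloring constant is uniform across layers.
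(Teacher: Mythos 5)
The paper does not actually prove this statement: it is quoted as the classical Besicovitch covering theorem and used as a black box (its constant $\mathcal{N}_B$ reappears in (\ref{HandPborneeslunparlautre}) and in the proof of Lemma \ref{coupures-1}), so there is no in-paper argument to compare yours against. Your sketch is the standard textbook proof --- greedy selection of a ball whose radius exceeds $\tfrac34$ of the supremum of the remaining radii, the two consequences $|x_i-x_j|\ge r_{x_i}$ and $r_{x_i}>\tfrac34 r_{x_j}$ for $i<j$, the bounded-overlap lemma split into a comparable-radius case handled by volume packing and a large-radius case handled by angular separation, and a final greedy $N_B$-coloring --- and that core is correct and correctly identifies the angular estimate as the genuinely hard step.

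The one step that would fail as written is the opening reduction. Splitting $A$ into dyadic radius layers $R2^{-(n+1)}<r_x\le R2^{-n}$, running the construction on each layer, and ``reassembling'' does not preserve the constant: a ball from one layer can meet infinitely many pairwise disjoint balls from another layer, so same-colored subfamilies from different layers cannot be merged into a single disjoint family, and you would end with countably many subfamilies rather than $N_B$. The assertion that the constant is not inflated ``because the coloring constant is uniform across layers'' is therefore not a proof; the cross-scale interaction is exactly what the angular argument exists to control. Fortunately the layering is unnecessary: the greedy selection with the $\tfrac34$-supremum rule is run once on the whole family (this is precisely what yields the quasi-monotonicity $r_{x_i}>\tfrac34 r_{x_j}$ that you use), and boundedness of the radii is all the overlap estimates require. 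The only legitimate decomposition is the spatial one needed for unbounded $A$: cut $\mathbb{R}^d$ into cubes of side about $4\sup_{x}r_x$, observe that balls centered in non-adjacent cubes are automatically disjoint, and reuse colors across the finitely many cube parities, which keeps the constant dimensional. Note also that your covering argument (``the selected radii are forced to zero inside any bounded region'') only rules out uncovered points after this spatial localization: for unbounded $A$ the selected centers can escape to infinity with radii bounded below, leaving a point of $A$ forever unselected and uncovered.
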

We now proceed in introducing the construction of the new variants of multifractal Hausdorff and packing measures and the associated dimensions. We will see later that being quasi-Ahlfors is necessary for at least one measure.

Let $\xi=(\mu,\nu)=(\mu_{1},\mu_{2},...,\mu_{k},\nu)\in\mathcal{P}(\mathbb{R}^{d})^{k}\times\mathcal{QAHP}(\mathbb{R}^d)$ and $(q,t)=(q_{1},q_{2},...,q_{k},t)\in\mathbb{R}^{k+1}$. For $E\subset\mathbb{R}^d$ and $\varepsilon >0$, let 
$$
{\overline{\mathcal{H}}}_{\xi,\varepsilon}^{q,t}(E)=\inf\biggl\{\sum_i(\mu(B(x_{i},r_{i})))^{q}(\nu(B(x_{i},r_{i})))^{t}\biggr\}
$$
and
$$
{\overline{\mathcal{H}}}_{\xi}^{q,t}(E)=\lim_{\varepsilon\downarrow0}{\overline{\mathcal{H}}}_{\xi,\varepsilon}^{q,t}(E),
$$
where the inf is over the set of centred $\varepsilon$-coverings of $E$. Similarly, let 
$$
{\overline{\mathcal{P}}}_{\xi,\varepsilon}^{q,t}(E)=\sup\{\sum_i(\mu(B(x_{i},r_{i})))^{q}(\nu(B(x_{i},r_{i})))^{t}\}
$$
and
$$
{\overline{\mathcal{P}}}_{\xi}^{q,t}(E)=\lim_{\varepsilon\downarrow0}{\overline{\mathcal{P}}}_{\xi,\varepsilon}^{q,t}(E),
$$
where the sup is over the set of centred $\varepsilon$-packings of $E$. 
\begin{definition}\label{mixedhausdorffandpackinggeneralisationsmeasures}
The mixed generalized Hausdorff measure relatively to $\xi$ is stated as 
$$
\mathcal{H}_{\xi}^{q,t}(E)=\displaystyle\sup_{F\subseteq E}{\overline{\mathcal{H}}}_{\xi}^{q,t}(F).
$$
The mixed generalized packing measure relatively to $\xi$ is expressed by
$$
\mathcal{P}_{\xi}^{q,t}(E)=\inf_{E\subseteq\cup_iE_{i}}{\sum_i}{\overline{\mathcal{P}}}_{\xi}^{q,t}(E_{i}).
$$
\end{definition}
It is straightforward that $\mathcal{H}_{\xi}^{q,t}$ and $\mathcal{P}_{\xi}^{q,t}$ are outer metric and regular measures on $\mathbb{R}^d$. Borel sets are thus measurable relatively to them. Furthermore, we may prove using the well known Besicovitch covering theorem that
\begin{equation}\label{HandPborneeslunparlautre}
\mathcal{H}_{\xi}^{q,t}(E)\leq\mathcal{N}_B\mathcal{P}_{\xi}^{q,t}\leq\mathcal{N}_B{\overline{\mathcal{P}}}_{\xi}^{q,t}(E),\;\forall(q,t)\in\mathbb{R}^{k+1},\;\forall E\subseteq \mathbb{R}^{d}.
\end{equation}
$\mathcal{N}_B$ is the constant number related to the Besicovitch covering theorem.

The original idea in \cite{olsen1} considers for $\mu\in\mathcal{P}(\mathbb{R}^d)$ and $(q,t)\in\mathbb{R}^2$ the dimension function
$$
h_{q,t}(r)=\mu(B(x, r))^qr^t,\;\; r > 0,
$$
and associates variants $\mathcal{H}_{h_{q,t}}$ for Hausdorff and $\mathcal{P}_{h_{q,t}}$ for packing measures relatively to $h_{q,t}$. These measures have special supports 
$$
X_\mu(\alpha)=\Biggl\{x\in\mathcal{S}_\mu,\;\displaystyle\lim_{r\downarrow0}\displaystyle\frac{\log(\mu(B(x,r)))}{\log\,r}=\alpha\Biggr\}
$$
known as multifractal decomposition sets, for suitable parameters $\alpha$ and $\mu$ as in doubling, H\"olderian, and Gibbs cases. To show the validity of the multifractal formalism in some special cases $X_\nu(f(\alpha))$ for suitable functions $f$, a large deviation formalism has been applied to obtain Gibbs measures supported by these sets. Billingsley theorem known in ergodic theory has been next applied to deduce the dimension of $X_\mu(\alpha)$ from the dimension of $X_\nu(f(\alpha))$.

In the present situation, a more general case is investigated for vector-valued measures using the cross-correlation dimension function 
$$
H_{q_1,\dots,q_k,t}(r)=\mu_1(B(x,r))^{q_1}\dots\mu_k(B(x,r))^{q_k}\nu(B(x,r))^{t},\;\; r > 0 ,
$$
for $q_1,\dots,q_k,t\in\mathbb{R}$, and where $\mu_1,\dots,\mu_k,\nu$ are Borel probability measures on $\mathbb{R}^d$. $\nu$ plays the role of a gauge function used to control the simultaneous behavior of the measures $\mu_i$, $i=1,\dots,k$. Variants of Hausdorff and packing measures relatively to the multi-variables dimension function $H_{q_1,\dots,q_k,t}$ are introduced. For example, in the single case $k=1$ and the choice $\nu(B(x,r))\simeq r$, we immediately obtain 
$$
H_{q,t}(r) = h_{q,t}(r)
$$
for all $q,t\in\mathbb{R}$, which means that the variants of Hausdorff and packing measures relative to $H_{q_1,\dots,q_k,t}$ are indeed extensions of those introduced in \cite{olsen1}. 

The results developed in the present work lie in the whole topic of dimension theory, especially Carath\'eodory dimension, which in turns constitutes a general form of Hausdorff one. Indeed, in \cite{Pesin-Weiss}, a link between dynamical systems and dimension theory has been pointed out, which makes a motivation to our work here. The idea in \cite{Pesin-Weiss} consists in interpreting the concept of dimension as a C-structure. Mathematically speaking, let $\mathcal{C}$ be any collection of subsets of a space $X$ known as the physical space, and consider some non-negative functions $\varphi$, $\Lambda$ and $\Theta$ defined on $\mathcal{C}$, which play respectively the role of the diameter and some statistical physics measures. In this framework, the dimension function $\mathcal{H}_{\xi}^{q,t}$ applied in the present work may be interpreted as a multivariate form of the Carath\'eodory one. Under suitable conditions on these functions, we may introduce a free energy analogue to the fractal measures by setting
$$
\mathcal{H}_\alpha(X)= \inf\{\displaystyle\sum_{U\in\mathcal{G}}\varphi(U)\Lambda(U)^\alpha\},
$$
where $\mathcal{G}$ depends eventually on $X$ and $\Lambda$. This which yields as usual a dimension called the capacity of $X$. One of the related frameworks in multifractal analysis is the adoption of the continuous analysis to discrete spaces such as $\mathbb{Z}^d$. Some essays have been already started the last decades by comparing the log-measure of balls to some quantities that are not log-powers of the diameter. As we know, the Hausdorff dimension of discrete sets is not important in the framework of classical definitions. Therefore, different variants of function dimensions should be applied appropriately for discrete sets. These set functions are no longer power-laws of the diameter. Interested readers may refer to \cite{Aversa-Bandt}, \cite{Barlow-Taylor}, \cite{Glasscock} and the references therein for the discrete framework.

In another parallel direction, J. Cole in \cite{Cole} proposed to control the analyzed measure $\mu$ by another suitable measure $\nu$ via a relative multifractal analysis of the relative singularity sets 
$$
X_{\xi}(\alpha,\beta)=\Biggl\{x\in\mathcal{S}_{\xi},\;\displaystyle\lim_{r\downarrow0}\displaystyle\frac{\log(\mu(B(x,r)))}{\log\,r}=\alpha\,\mbox{and}\,
\displaystyle\lim_{r\downarrow0}\displaystyle\frac{\log(\mu(B(x,r)))}{\log(\nu(B(x,r)))}=\beta\Biggr\}.
$$
We now introduce the associated mixed dimensions relative to the generalized Hausdorff and packing measures $\mathcal{H}_{\xi}^{q,t}$ and $\mathcal{P}_{\xi}^{q,t}$. We will notice the necessity of the quasi-Ahlfors assumption. We have the following result which prepares to introduce the new mixed dimensions and shows the necessity of being Ahlfors for some measures composing the $(k+1)$-tuple measure $\xi$.
\begin{lemma}\label{coupures-1}
Let $\xi=(\mu,\nu)\in\mathcal{P}(\mathbb{R}^d)^k\times\mathcal{QAHP}(\mathbb{R}^d)$ and $E\subseteq\mathbb{R}^d$. $\forall q\in\mathbb{R}^{k}$, The set $\Gamma_{q}:=\left\{t\in\mathbb{R},\;\mathcal{H}_{\xi}^{q,t}(E)<+\infty\right\}$ is nonempty. 
\end{lemma}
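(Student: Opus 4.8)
The plan is to exhibit, for each fixed $q\in\mathbb{R}^{k}$, a single value of $t$ for which $\mathcal{H}_{\xi}^{q,t}(E)<+\infty$. Since the gauge $\nu(B(x,r))$ is eventually smaller than $1$, the factor $(\nu(B))^{t}$ is non-increasing in $t$, hence so are $\overline{\mathcal{H}}_{\xi,\varepsilon}^{q,t}$ and $\mathcal{H}_{\xi}^{q,t}(E)$. Thus $\Gamma_{q}$ is an up-set, and it suffices to prove finiteness for one sufficiently large $t$; I expect in fact that $\Gamma_{q}$ contains a whole ray $[t_{0},+\infty)$.

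First I would exploit the quasi-Ahlfors hypothesis on $\nu$: there exist constants $c>0$ and $\delta_{0}>0$ such that
$$
\nu(B(x,r))\leq c\,(2r)^{\alpha}\qquad\text{for all }x\text{ and all }0<r\leq\delta_{0}.
$$
Consequently, for any centred $\varepsilon$-covering $(B(x_{i},r_{i}))_{i}$ of a subset $F\subseteq E$ with $\varepsilon\leq\delta_{0}$ and any $t>0$,
$$
\sum_{i}\bigl(\mu(B(x_{i},r_{i}))\bigr)^{q}\bigl(\nu(B(x_{i},r_{i}))\bigr)^{t}\leq (c\,2^{\alpha})^{t}\sum_{i}\bigl(\mu(B(x_{i},r_{i}))\bigr)^{q}\,r_{i}^{\alpha t}.
$$
This isolates the role of $\nu$ as a genuine power-law decay $r_{i}^{\alpha t}$, which can be made as strong as wished by increasing $t$.

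Next I would split the index set into $P=\{j:q_{j}\geq 0\}$ and $N=\{j:q_{j}<0\}$ and factor $(\mu(B))^{q}=\prod_{j\in P}(\mu_{j}(B))^{q_{j}}\prod_{j\in N}(\mu_{j}(B))^{q_{j}}$. Since each $\mu_{j}$ is a probability measure, $\mu_{j}(B)\leq 1$, so $\prod_{j\in P}(\mu_{j}(B))^{q_{j}}\leq 1$ and the positive exponents are harmless. The remaining factor $\prod_{j\in N}(\mu_{j}(B))^{q_{j}}$ is the crux of the argument: for $q_{j}<0$ it blows up when $\mu_{j}(B)$ is small, so a careless covering can make the sum diverge. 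I would therefore choose the covering carefully, via the Besicovitch covering theorem (bounded overlap, controlled by $\mathcal{N}_{B}$) applied to a family of balls adapted to the measures $\mu_{j}$, $j\in N$, so that along the covering the factors $(\mu_{j}(B_{i}))^{q_{j}}$ stay dominated by a fixed negative power of $r_{i}$; combined with the decay $r_{i}^{\alpha t}$ this leaves a net power $r_{i}^{\alpha t-\beta}$ for some constant $\beta=\beta(q)$.

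Finally, fixing $t$ large enough that $\alpha t-\beta$ exceeds the relevant covering exponent (e.g. $\alpha t-\beta>d$ on a bounded piece of $E$), the sum over an efficient covering by balls of radius $<\varepsilon$ is bounded by a constant times $\varepsilon^{\alpha t-\beta-d}$, which tends to $0$ as $\varepsilon\downarrow 0$, uniformly in $F\subseteq E$. Taking the supremum over $F\subseteq E$ then gives $\mathcal{H}_{\xi}^{q,t}(E)=0<+\infty$, so $t\in\Gamma_{q}$ and $\Gamma_{q}\neq\emptyset$. The main obstacle is precisely the control of the negative exponents $q_{j}<0$: this is where the quasi-Ahlfors decay of $\nu$ is indispensable, as it supplies the $r^{\alpha t}$ reserve of decay that must absorb the growth of $(\mu_{j}(B))^{q_{j}}$, and it explains why being (quasi-)Ahlfors is required for at least one measure composing $\xi$.
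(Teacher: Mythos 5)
Your reduction to large $t$ via monotonicity in $t$, and your treatment of the nonnegative exponents, are fine and close in spirit to the paper (which, for $q$ with all components nonnegative, simply takes $t=1$ and uses the disjointness of the Besicovitch subfamilies to bound the sum by $\mathcal{N}_B\nu(\mathbb{R}^d)$, without even needing the quasi-Ahlfors property at that stage). The genuine gap is in the step you yourself identify as the crux: the claim that, by choosing the covering through the Besicovitch theorem, the factors $(\mu_j(B_i))^{q_j}$ with $q_j<0$ ``stay dominated by a fixed negative power of $r_i$.'' The Besicovitch covering theorem only gives bounded overlap; it provides no lower bound whatsoever on $\mu_j(B(x,r))$ in terms of $r$. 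The measures $\mu_j$ are arbitrary Borel probability measures, with no doubling or lower Ahlfors regularity assumed, so $\mu_j(B(x,r))$ may decay faster than any power of $r$ at every point of $E$ (take, e.g., a Cantor-type measure whose lower local dimension is $+\infty$ everywhere on its support); then $(\mu_j(B_i))^{q_j}\geq r_i^{-\beta}$ for every fixed $\beta$ once $r_i$ is small, and the reserve $r_i^{\alpha t}$ cannot absorb it for any finite $t$. So the inequality on which your final counting argument rests is false in general, and your proof does not go through as soon as some $q_j<0$.

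The paper avoids this by never comparing $\mu(B_i)^{q}$ with powers of $r_i$. It writes $\nu(B_i)^{t}\leq M^{t}r_i^{\alpha t}$ and recognizes the resulting sum $\sum_i\mu(B_i)^{q}(2r_i)^{\alpha t}$ (up to the constant $2^{-\alpha t}M^{t}$) as the pre-measure of the classical Olsen-type multifractal Hausdorff measure built from $\mu$ and the diameter gauge, with associated dimension $\dim_{\mu}^{q}(E)$; choosing $\alpha t>\max(1,\dim_{\mu}^{q}(E))$ then yields $\mathcal{H}_{\xi}^{q,t}(E)<+\infty$. In other words, the singular behaviour of the negative exponents is delegated wholesale to the (known) theory of the classical multifractal Hausdorff dimension rather than tamed ball by ball. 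If you want to complete your argument, this reduction is the substitute you need for your domination claim; the quasi-Ahlfors hypothesis on $\nu$ enters only to convert $\nu(B_i)^{t}$ into the diameter gauge $r_i^{\alpha t}$, exactly as in your second display.
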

\begin{proof} Let $\alpha,M\in\mathbb{R}_{+}$ be such that
$$
\displaystyle\limsup_{|U|\rightarrow0}\displaystyle\frac{\nu(U)}{|U|^{\alpha}}<M.
$$
We obtain, for some $\delta>0$ and $\forall r$, $0<r<\delta$, 
$$
\nu (U)\leq M|U|^{\alpha};\;\forall U,\;|U|<r.
$$
Consider next a $\delta$-covering $(B(x_{i},r_{i}))_{i}$ of $E$, and the $\mathcal{N}_B$ Besicovitch collections. We obtain 
$$
\sum_i\mu (B(x_{i},r_{i}))^{q}\nu (B(x_{i},r_{i}))^{t}\leq 
\sum_{i=1}^{\mathcal{N}_B}\sum_j\mu(B(x_{ij},r_{ij}))^{q}\nu (B(x_{ij},r_{ij}))^{t}.
$$
Whenever $q\geq0$, the right hand term is bounded by
$$
\sum_{i=1}^{\mathcal{N}_B}\sum_j\nu(B(x_{ij},r_{ij}))^{t}.
$$
For $t=1,$ this becomes
$$
\sum_{i=1}^{\mathcal{N}_B}\sum_j\nu(B(x_{ij},r_{ij})).
$$
As the $(B(x_{ij},r_{ij}))_{j}$ are disjoint, the last quantity will be bounded by
$$
\sum_{i=1}^{\mathcal{N}_B}\nu\left(\cup_jB(x_{ij},r_{ij})\right)\leq\mathcal{N}_B\nu(\mathbb{R}^{d})=\mathcal{N}_B.
$$
Consequently, we obtain
$$
\mathcal{H}_{\xi}^{q,1}(E)<+\infty .
$$
Assume now that there exist $i_{j},$ $1\leq j\leq k$ such that $q_{i_{j}}\leq 0$. For $t>0,$ we get
$$
\nu (B(x_{i_{j}},r_{i_{j}}))^{t}\leq M^{t}r_{i_{j}}^{\alpha t},\forall j.
$$
As a result, we get
$$
\sum_j\mu(B(x_{i_{j}},r_{i_{j}}))^{q}\nu(B(x_{i_{j}},r_{i_{j}}))^{t}\leq2^{-\alpha t}M^{t}\sum_j\mu(B(x_{i_{j}},r_{i_{j}}))^{q}(2r_{i_{j}})^{\alpha t}.
$$
Let next $t>\frac{1}{\alpha}\left[\max\left(1,dim_{\mu}^{q}(E)\right)\right]$. We obtain
$$
\mathcal{H}_{\xi}^{q,t}(E)\leq2^{-\alpha t}M^{t}\mathcal{H}_{\xi}^{q,\alpha t}(E)<+\infty.
$$
\end{proof}
As a consequence of Lemma \ref{coupures-1}, we get 
\begin{equation}\label{coupures-2a}
\mathcal{H}_{\xi}^{q,t}(E)<+\infty\;\;\Longrightarrow\;\;\mathcal{H}_{\xi}^{q,s}(E)=0,\;\;\forall\,s>t,
\end{equation}
and
\begin{equation}\label{coupures-2b}
\mathcal{H}_{\xi}^{q,t}(E)>0\;\;\Longrightarrow\;\;\mathcal{H}_{\xi}^{q,s}(E)=+\infty,\;\;\forall\,s<t.
\end{equation}
This permits to introduce now the generalised mixed multifractal dimensions due to the variants $\mathcal{H}_{\xi}^{q,t}$ and $\mathcal{P}_{\xi}^{q,t}$.
\begin{proposition}\label{existencecoupures} For any set $E\subseteq\mathbb{R}^d$, there exists unique values denoted by  $dim_{\xi}^{q}({E})$, $\Delta_{\xi}^{q}({E})$
 and $Dim_{\xi}^{q}({E})$ in $\left[-\infty,+\infty\right]$, and satisfying respectively,
\begin{enumerate}
\item 
$$
\mathcal{H}_{\xi}^{q,t}(E)=\left\{ 
\begin{array}{c}
\infty\hbox{ for }t<dim_{\xi}^{q}({E}), \\ 
0\hbox{ for }t>dim_{\xi}^{q}({E}).
\end{array}
\right.
$$
\item
$$
{\overline{\mathcal{P}}}_{\xi}^{q,t}(E)=\left\{
\begin{array}{c}
\infty\hbox{ for }t<\Delta_{\xi}^{q}({E}),\\ 
0\hbox{ for }t>\Delta_{\xi}^{q}({E}).
\end{array}
\right. 
$$
\item
$$
\mathcal{P}_{\xi}^{q,t}(E)=\left\{ 
\begin{array}{c}
\infty\hbox{ for }t<Dim_{\xi}^{q}({E}),\\ 
0\hbox{ for }t>Dim_{\xi}^{q}({E}).
\end{array}
\right. 
$$
\end{enumerate}
\end{proposition}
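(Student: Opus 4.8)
The statement is a standard "critical exponent" existence result: for each of the three set functions $\mathcal{H}_{\xi}^{q,t}$, $\overline{\mathcal{P}}_{\xi}^{q,t}$ and $\mathcal{P}_{\xi}^{q,t}$, viewed as functions of the single real variable $t$ (with $q$ and $E$ fixed), I want to extract a threshold at which the function jumps from $+\infty$ to $0$. The plan is to treat each of the three cases separately but by the same recipe, and in each case to produce the threshold as an infimum. Concretely, for the Hausdorff case I would set
$$
dim_{\xi}^{q}(E)=\inf\{t\in\mathbb{R}:\mathcal{H}_{\xi}^{q,t}(E)=0\}=\sup\{t\in\mathbb{R}:\mathcal{H}_{\xi}^{q,t}(E)=+\infty\},
$$
with the usual conventions $\inf\emptyset=+\infty$ and $\sup\emptyset=-\infty$, and then verify that this value does what the proposition asserts.

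The engine that makes this work is already in hand: Lemma \ref{coupures-1} guarantees that $\Gamma_{q}=\{t:\mathcal{H}_{\xi}^{q,t}(E)<+\infty\}$ is nonempty, so the set over which I take the infimum is nonempty and the threshold is not trivially $+\infty$; and the two implications \eqref{coupures-2a} and \eqref{coupures-2b} are exactly the monotonicity statements I need. First I would record that \eqref{coupures-2a} shows $\{t:\mathcal{H}_{\xi}^{q,t}(E)=0\}$ is an up-set (if it holds at $t$ it holds at every $s>t$) and \eqref{coupures-2b} shows $\{t:\mathcal{H}_{\xi}^{q,t}(E)=+\infty\}$ is a down-set. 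Hence the two displayed sets defining $dim_{\xi}^{q}(E)$ are an upper and a lower interval sharing a common endpoint, which gives both the well-definedness and the uniqueness of the cut value: for $t>dim_{\xi}^{q}(E)$ one can pick $s$ with $dim_{\xi}^{q}(E)<s<t$ and $\mathcal{H}_{\xi}^{q,s}(E)<+\infty$, whence $\mathcal{H}_{\xi}^{q,t}(E)=0$ by \eqref{coupures-2a}; and for $t<dim_{\xi}^{q}(E)$ one picks $s$ with $t<s<dim_{\xi}^{q}(E)$ having $\mathcal{H}_{\xi}^{q,s}(E)>0$, whence $\mathcal{H}_{\xi}^{q,t}(E)=+\infty$ by \eqref{coupures-2b}. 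Uniqueness is immediate because any two values with this jump property would force a contradiction on any $t$ strictly between them.

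For $\overline{\mathcal{P}}_{\xi}^{q,t}$ and $\mathcal{P}_{\xi}^{q,t}$ I would repeat the argument once I have the analogues of \eqref{coupures-2a}–\eqref{coupures-2b}. The monotonicity for $\overline{\mathcal{P}}_{\xi}^{q,t}$ is obtained by exactly the same quasi-Ahlfors estimate used in Lemma \ref{coupures-1}, applied to centred $\varepsilon$-packings rather than coverings: the bound $\nu(B)\le M|B|^{\alpha}$ for small balls lets one compare the power $t$ with the power $s$ and show that finiteness at $t$ forces the value $0$ at every $s>t$, giving $\Delta_{\xi}^{q}(E)$ as the corresponding infimum. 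The passage from $\overline{\mathcal{P}}_{\xi}^{q,t}$ to $\mathcal{P}_{\xi}^{q,t}$ then follows from the definition $\mathcal{P}_{\xi}^{q,t}(E)=\inf_{E\subseteq\cup_iE_i}\sum_i\overline{\mathcal{P}}_{\xi}^{q,t}(E_i)$, since a countable-sum-of-infima preserves the jump structure; one checks that $Dim_{\xi}^{q}(E)=\inf\{t:\mathcal{P}_{\xi}^{q,t}(E)=0\}$ inherits the threshold behaviour from the pre-measure.

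The main obstacle I expect is in the packing pre-measure $\overline{\mathcal{P}}_{\xi}^{q,t}$: I must make sure its set of finiteness is nonempty, i.e. establish a packing analogue of Lemma \ref{coupures-1} guaranteeing $\overline{\mathcal{P}}_{\xi}^{q,t}(E)<+\infty$ for $t$ large. This is where the quasi-Ahlfors hypothesis on $\nu$ is genuinely used, and the delicate point is that for a \emph{packing} the balls are disjoint, so one relies on $\sum_i\nu(B(x_i,r_i))=\nu(\cup_i B(x_i,r_i))\le\nu(\mathbb{R}^d)=1$ together with $\nu(B)\le M|B|^{\alpha}$ to control the sum; without this regularity the supremum defining $\overline{\mathcal{P}}_{\xi}^{q,t}$ could remain infinite for all $t$ and no finite threshold $\Delta_{\xi}^{q}(E)$ would exist. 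Once nonemptiness of each finiteness set is secured, the rest is the purely order-theoretic \emph{up-set/down-set} argument above, carried out identically in all three cases.
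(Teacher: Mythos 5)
Your proposal is correct and follows essentially the same route as the paper: both define $dim_{\xi}^{q}(E)$, $\Delta_{\xi}^{q}(E)$ and $Dim_{\xi}^{q}(E)$ as the infimum of the corresponding zero-set and deduce the jump behaviour from the monotonicity implications (\ref{coupures-2a})--(\ref{coupures-2b}) and their packing analogues. You merely spell out more of the order-theoretic detail (the up-set/down-set structure, the $\inf=\sup$ identification, and the nonemptiness of the finiteness set for $\overline{\mathcal{P}}_{\xi}^{q,t}$) that the paper leaves implicit.
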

\begin{proof} 
Item 1 is a consequence of equations (\ref{coupures-2a}) and (\ref{coupures-2b}) by setting
$$
dim_{\xi}^{q}({E})=\inf\{\,t\in\mathbb{R}\,;\;\mathcal{H}_{\xi}^{q,t}(E)=0\}.
$$ 
Item 2 is a consequence of the following assertion, stating that for all $E\subseteq\mathbb{R}^d$ and $t>0$,
$$
{\overline{\mathcal{P}}}_{\xi}^{q,t}(E)<+\infty\;\;\Longrightarrow\;\;{\overline{\mathcal{P}}}_{\xi}^{q,s}(E)=0,\;\;\forall\,s>t,
$$
and
$$
{\overline{\mathcal{P}}}_{\xi}^{q,t}(E)>0\;\;\Longrightarrow\;\;{\overline{\mathcal{P}}}_{\xi}^{q,s}(E)=+\infty,\;\;\forall\,s<t,
$$
by setting
$$
\Delta_{\xi}^{q}({E})=\inf\{\,t\in\mathbb{R}\,;\;{\overline{\mathcal{P}}}_{\xi}^{q,t}(E)=0\}.
$$
Item 3 is a consequence of the following assertion, stating that for all $E\subseteq\mathbb{R}^d$ and $t>0$,
$$
{{\mathcal{P}}}_{\xi}^{q,t}(E)<+\infty\;\;\Longrightarrow\;\;{{\mathcal{P}}}_{\xi}^{q,s}(E)=0,\;\;\forall\,s>t,
$$
and
$$
{{\mathcal{P}}}_{\xi}^{q,t}(E)>0\;\;\Longrightarrow\;\;{{\mathcal{P}}}_{\xi}^{q,s}(E)=+\infty,\;\;\forall\,s<t,
$$ 
by setting
$$
Dim_{\xi}^{q}({E})=\inf\{\,t\in\mathbb{R}\,;\;\mathcal{P}_{\xi}^{q,t}(E)=0\}.
$$
\end{proof}
\begin{definition} 
\begin{itemize}
\item $dim_{\xi}^{q}({E})$ is called the mixed multifractal generalization of the Hausdorff dimension of the set $E$.
\item $Dim_{\xi}^{q}({E})$ is called the mixed multifractal generalization of the packing dimension of the set $E$.
\item $\Delta_{\xi}^{q}({E})$ is called the mixed multifractal generalization of the logarithmic index of the set $E$.
\end{itemize}
\end{definition}
Remark easily that the original definitions of the single Hausdorff and packing measures and dimensions are obtained for $k=1$ and $q=0$. Besides, the multifractal generalizations due to Olsen are obtained for $k=1$ and $q\in\mathbb{R}$. We have precisely,
$$
dim_{\xi}^{Q_{i}}({E})=dim_{\mu_{i},\nu}^{q_{i}}({E}),\;
Dim_{\xi}^{Q_{i}}({E})=Dim_{\mu_{i},\nu}^{q_{i}}({E}),\;
\Delta_{\xi}^{Q_{i}}({E})=\Delta_{\mu_{i},\nu}^{q_{i}}({E}),
$$
and
$$
dim_{\xi}^{0}({E})=dim_{\nu}({E}),\;
Dim_{\xi}^{0}({E})=Dim_{\nu}({E}),\;
\Delta_{\xi}^{0}({E})=\Delta_{\nu}({E}).
$$
From now on, we will denote for $E\subseteq\mathbb{R}^{d}$, $q=(q_{1},q_{2},...,q_{k})\in\mathbb{R}^{k}$, $t\in\mathbb{R}$, $\mu\in\mathcal{P}(\mathbb{R}^d)$ and $\nu\in\mathcal{QAHP}(\mathbb{R}^d)$, 
$$
b_{\xi}(E,q)=dim_{\xi}^{q}({E}),\;B_{\xi}(E,q)=Dim_{\xi}^{q}({E}),\;\Delta_{\xi}(E,q)=\Lambda_{\xi}^{q}({E}).
$$ 
For $E=S_{(\mu,\nu)}$, we denote 
$$
b_{\xi}(q)=dim_{\xi}^{q}(S_{(\mu,\nu)}),\;B_{\xi}(q)=Dim_{\xi}^{q}(S_{(\mu,\nu)}),\;\Delta_{\xi}(q)=\Lambda_{\xi}^{q}(S_{(\mu,\nu)}).
$$
For $x=(x_1,x_2,\dots,x_k)$ and $q=(q_1,q_2,\dots,q_k)$ in $\mathbb{R}^k$ we denote 
$$
|x|=x_1+x_2+\dots+x_k\;\;\mbox{and}\;\;x^q=x_1^{q_1}x_2^{q_2}\dots x_k^{q_k}.
$$
\begin{theorem} The following assertions are true.
\begin{description}
\item\textbf{a.} $b_{\xi}(.,q)$ and $B_{\xi}(.,q)$ and $\Delta_{\xi}(.,q)$ are non-decreasing with respect to the inclusion proprerty in $\mathbb{R}^{d}$.
\item\textbf{b.} $b_{\xi}(.,q)$ and $B_{\xi}(.,q)$ are $\sigma$-stable.
\item\textbf{c.} $B_{\xi}(q)$ and $\Lambda_{\xi}(q)$ are convex.
\item\textbf{d.} For $\widehat{q_{i}}=(q_{1},...,q_{i-1},q_{i+1},...,q_{k})$ fixed, the functions $q_{i}\mapsto
b_{\xi}(q)$, $q_{i}\mapsto B_{\xi}(q)$ and $q_{i}\mapsto\Lambda_{\xi}(q)$ are non-increasing, $\forall\,i=1,2,...,k$.
\end{description}
\end{theorem}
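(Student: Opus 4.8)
The plan is to treat parts \textbf{a}, \textbf{b} and \textbf{d} as soft consequences of the definitions of the set functions $\mathcal{H}_{\xi}^{q,t}$, $\overline{\mathcal{P}}_{\xi}^{q,t}$, $\mathcal{P}_{\xi}^{q,t}$ together with the threshold characterizations of Proposition \ref{existencecoupures}, reserving the real work for the convexity in \textbf{c}. For \textbf{a}, I would first observe that each of the three set functions is non-decreasing for inclusion: if $E\subseteq F$, then every centred $\varepsilon$-covering of a subset of $E$ is one of a subset of $F$, so $\mathcal{H}_{\xi}^{q,t}(E)\le\mathcal{H}_{\xi}^{q,t}(F)$; every centred $\varepsilon$-packing of $E$ is one of $F$, giving $\overline{\mathcal{P}}_{\xi}^{q,t}(E)\le\overline{\mathcal{P}}_{\xi}^{q,t}(F)$; and every countable cover of $F$ is a cover of $E$, giving $\mathcal{P}_{\xi}^{q,t}(E)\le\mathcal{P}_{\xi}^{q,t}(F)$. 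This monotonicity transfers to the thresholds: if $\mathcal{H}_{\xi}^{q,t}(F)=0$ then $\mathcal{H}_{\xi}^{q,t}(E)=0$, so $\{t:\mathcal{H}_{\xi}^{q,t}(F)=0\}\subseteq\{t:\mathcal{H}_{\xi}^{q,t}(E)=0\}$ and hence $b_{\xi}(E,q)\le b_{\xi}(F,q)$; the same passage applied to $\overline{\mathcal{P}}_{\xi}^{q,t}$ and $\mathcal{P}_{\xi}^{q,t}$ gives monotonicity of $\Lambda_{\xi}(\cdot,q)$ and $B_{\xi}(\cdot,q)$.

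For \textbf{b}, I would use that $\mathcal{H}_{\xi}^{q,t}$ and $\mathcal{P}_{\xi}^{q,t}$ are genuine outer measures, hence countably subadditive. Given $E=\bigcup_n E_n$, part \textbf{a} already yields $b_{\xi}(E,q)\ge\sup_n b_{\xi}(E_n,q)$. For the reverse, fix $s>\sup_n b_{\xi}(E_n,q)$; then $\mathcal{H}_{\xi}^{q,s}(E_n)=0$ for every $n$, and subadditivity gives $\mathcal{H}_{\xi}^{q,s}(E)\le\sum_n\mathcal{H}_{\xi}^{q,s}(E_n)=0$, whence $b_{\xi}(E,q)\le s$; letting $s\downarrow\sup_n b_{\xi}(E_n,q)$ closes the argument, and the identical reasoning with $\mathcal{P}_{\xi}^{q,t}$ gives the $\sigma$-stability of $B_{\xi}(\cdot,q)$. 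Note that $\Lambda_{\xi}$ is deliberately excluded here, since $\overline{\mathcal{P}}_{\xi}^{q,t}$ is only a premeasure and is not countably subadditive.

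The core is \textbf{c}, which rests on H\"older's inequality. Fix $q^0,q^1\in\mathbb{R}^k$, $\theta\in(0,1)$, set $q^\theta=(1-\theta)q^0+\theta q^1$ and $t_\theta=(1-\theta)t_0+\theta t_1$. For any ball $B$ the componentwise identity $x^{q^\theta}=(x^{q^0})^{1-\theta}(x^{q^1})^\theta$ gives the splitting
$$(\mu(B))^{q^\theta}(\nu(B))^{t_\theta}=\bigl[(\mu(B))^{q^0}(\nu(B))^{t_0}\bigr]^{1-\theta}\bigl[(\mu(B))^{q^1}(\nu(B))^{t_1}\bigr]^{\theta}.$$
Applying H\"older with exponents $1/(1-\theta)$ and $1/\theta$ to any centred $\varepsilon$-packing of $S_{(\mu,\nu)}$ and taking the supremum yields
$$\overline{\mathcal{P}}_{\xi,\varepsilon}^{q^\theta,t_\theta}\bigl(S_{(\mu,\nu)}\bigr)\le\Bigl(\overline{\mathcal{P}}_{\xi,\varepsilon}^{q^0,t_0}\bigl(S_{(\mu,\nu)}\bigr)\Bigr)^{1-\theta}\Bigl(\overline{\mathcal{P}}_{\xi,\varepsilon}^{q^1,t_1}\bigl(S_{(\mu,\nu)}\bigr)\Bigr)^{\theta},$$
and letting $\varepsilon\downarrow0$ the same bound holds for $\overline{\mathcal{P}}_{\xi}$. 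Choosing $t_0>\Lambda_{\xi}(q^0)$ and $t_1>\Lambda_{\xi}(q^1)$ makes both right-hand factors vanish, so $\overline{\mathcal{P}}_{\xi}^{q^\theta,t_\theta}(S_{(\mu,\nu)})=0$, i.e. $\Lambda_{\xi}(q^\theta)\le t_\theta$; letting $t_0\downarrow\Lambda_{\xi}(q^0)$ and $t_1\downarrow\Lambda_{\xi}(q^1)$ gives convexity of $\Lambda_{\xi}$. For $B_{\xi}$ the same splitting cannot be pushed through the extra infimum in $\mathcal{P}_{\xi}^{q,t}$, since $\overline{\mathcal{P}}_{\xi}^{q,t}$ is not subadditive and two near-optimal covers for $q^0$ and $q^1$ cannot be merged on the measure level; this is the main obstacle. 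I would bypass it at the level of dimensions, via the standard identity $B_{\xi}(E,q)=\inf\{\sup_n\Lambda_{\xi}(E_n,q):E\subseteq\bigcup_nE_n\}$: given covers nearly realizing $B_{\xi}(S_{(\mu,\nu)},q^0)$ and $B_{\xi}(S_{(\mu,\nu)},q^1)$, pass to the common refinement $\{A_i\cap C_j\}$, apply convexity of $\Lambda_{\xi}$ together with its monotonicity (part \textbf{a}) on each piece, and take suprema to obtain $B_{\xi}(q^\theta)\le(1-\theta)B_{\xi}(q^0)+\theta B_{\xi}(q^1)$.

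Finally, for \textbf{d} I would exploit that each $\mu_i$ is a probability measure, so $0<\mu_i(B(x,r))\le1$ on the centred balls used, whence $q_i\mapsto(\mu_i(B(x,r)))^{q_i}$ is non-increasing (the base lies in $(0,1]$). Thus every summand $(\mu(B))^{q}(\nu(B))^{t}$, and therefore each finite sum over a covering or packing, is non-increasing in $q_i$ for fixed $\widehat{q_i}$ and $t$; since an infimum, a supremum and a pointwise limit of non-increasing functions remain non-increasing, $\mathcal{H}_{\xi}^{q,t}$, $\overline{\mathcal{P}}_{\xi}^{q,t}$ and $\mathcal{P}_{\xi}^{q,t}$ are all non-increasing in $q_i$. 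As in part \textbf{a}, if $q_i\le q_i'$ then $\mathcal{H}_{\xi}^{q,t}(S_{(\mu,\nu)})=0$ forces $\mathcal{H}_{\xi}^{q',t}(S_{(\mu,\nu)})=0$, so $b_{\xi}(q')\le b_{\xi}(q)$, and the identical passage to thresholds for $\overline{\mathcal{P}}_{\xi}^{q,t}$ and $\mathcal{P}_{\xi}^{q,t}$ shows that $\Lambda_{\xi}$ and $B_{\xi}$ are non-increasing in each $q_i$ as well.
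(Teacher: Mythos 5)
Your proposal is correct and follows essentially the same route as the paper: monotonicity and subadditivity of the set functions for \textbf{a} and \textbf{b}, the H\"older splitting for the convexity of $\Lambda_{\xi}$, a common refinement of two near-optimal covers for the convexity of $B_{\xi}$, and the bound $\mu_i(B)\leq 1$ for \textbf{d}. The only (cosmetic) difference is that you finish the $B_{\xi}$ convexity at the level of dimensions via the identity $B_{\xi}(E,q)=\inf\bigl\{\sup_n\Lambda_{\xi}(E_n,q):E\subseteq\bigcup_nE_n\bigr\}$, whereas the paper runs the same refinement argument at the level of the premeasures $\overline{\mathcal{P}}_{\xi}^{q,t}$ before passing to dimensions; both are valid and rest on the same two ingredients.
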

\begin{proof}
\textbf{a.} follows from the non decreasing property of $\mathcal{H}_{\xi}^{q,t}$, $\mathcal{P}_{\xi}^{q,t}$ and $\overline{\mathcal{P}}_{\xi}^{q,t}$ with respect to the inclusion in $\mathbb{R}^{d}$.\\
\textbf{b.} follows from the sub-additivity property of $\mathcal{H}_{\xi}^{q,t}$ and $\mathcal{P}_{\xi}^{q,t}$ in $\mathbb{R}^{d}$.\\
\textbf{c.} We start by proving the convexity of $\Lambda_{\xi}(E,.)$. Consider $p,q\in\mathbb{R}^{k}$, $\alpha\in]0,1[$, and $s,t\in\mathbb{R}$, such that
$$
s>\Lambda_{\xi}(E,p)\mbox{ and }t>\Lambda_{\xi}(E,q).
$$
Let $\varepsilon>0$ be fixed arbitrary, and $(B_{i}=B(x_{i},r_{i}))_{i}$ be a centered $\varepsilon$-packing of $E$. We have
\begin{eqnarray*}
&&{\sum_i}(\mu(B_{i}))^{\alpha q+(1-\alpha)p}(\nu(B_{i}))^{\alpha t+(1-\alpha)s}\\
&\leq&\left[{\sum_i}(\mu(B_{i}))^{q}(\nu(B_{i}))^{t}\right]^{\alpha }\left[{\sum_i}(\mu(B_{i}))^{p}(\nu(B_{i}))^{s}\right]^{1-\alpha}.
\end{eqnarray*}
Hence,
$$
{\overline{\mathcal{P}}}_{\xi,\varepsilon}^{^{\alpha q+(1-\alpha)p,\alpha t+(1-\alpha)s}}(E)\leq({\overline{\mathcal{P}}}_{\xi,\varepsilon}^{q,t}(E))^{\alpha}({\overline{\mathcal{P}}}_{\xi,\varepsilon}^{p,s}(E))^{1-\alpha}.
$$
The limit as $\varepsilon\downarrow 0$ gives 
$$
{\overline{\mathcal{P}}}_{\xi}^{^{\alpha q+(1-\alpha)p,\alpha t+(1-\alpha)s}}(E)\leq({\overline{\mathcal{P}}}_{\xi}^{q,t}(E))^{\alpha}({\overline{\mathcal{P}}}_{\xi}^{p,s}(E))^{1-\alpha}.
$$
Consequently,
$$
{\overline{\mathcal{P}}}_{\xi}^{^{\alpha q+(1-\alpha)p,\alpha t+(1-\alpha)s}}(E)=0,\;\forall\,s>\Lambda_{\xi}(E,p)\hbox{ and }t>\Lambda_{\xi}(E,q).
$$
It results that 
$$
\Lambda_{\xi}(\alpha q+(1-\alpha)p,E)\leq\alpha\Lambda_{\xi}(E,q)+(1-\alpha)\Lambda_{\xi}(E,p).
$$
We now prove the convexity of $B_{\xi}(E,.)$. We set in this case%
$$
t=B_{\xi}(E,q)\hbox{ and }s=B_{\xi}(E,p).
$$
We have 
$$
\mathcal{P}_{\xi}^{q,t+\varepsilon}(E)=\mathcal{P}_{\xi}^{q,s+\varepsilon}(E)=0.
$$
Therefore, there exists $(H_{i})_{i}$ and $(K_{i})_{i}$ coverings of the set $E$ for which
$$
{\sum_i}{\overline{\mathcal{P}}}_{\xi}^{q,t+\varepsilon}(H_{i})\leq C<+\infty\hbox{ and }{\sum_i}{\overline{\mathcal{P}}}_{\xi}^{p,s+\varepsilon}(K_{i})\leq C<+\infty.
$$  
$C$ being a positive constant. Then, the sequence $\bigl(E_{n}={\cup_{i,j=1}^n}(H_{i}\cap K_{j}))\bigr)_{n\in\mathbb{N}}$ is a covering of $E$. So that,
$$
\begin{array}{lll}
&&\mathcal{P}_{\xi}^{^{\alpha q+(1-\alpha)p,\alpha t+(1-\alpha)s}}(E_{n})\\
&\leq&\displaystyle{\sum_{i,j=1}^n}\mathcal{P}_{\xi}^{^{\alpha q+(1-\alpha)p,\alpha t+(1-\alpha)s}}(H_{i}\cap K_{j})\\
&\leq&\displaystyle{\sum_{i,j=1}^n}{\overline{\mathcal{P}}}_{\xi}^{^{\alpha q+(1-\alpha)p,\alpha t+(1-\alpha)s}}(H_{i}\cap K_{j}) \\
&\leq&\displaystyle\left({\sum_{i,j=1}^n}{\overline{\mathcal{P}}}_{\xi}^{q,t+\varepsilon}(H_{i}\cap K_{j})\right)^{\alpha}\left({\sum_{i,j=1}^n}{\overline{\mathcal{P}}}_{\xi}^{p,s+\varepsilon}(H_{i}\cap K_{j})\right)^{1-\alpha}  \\
&\leq&nC<\infty.
\end{array}
$$
Consequently,
$$
B_{\xi}(E_n,\alpha q+(1-\alpha)p)\leq\alpha t+(1-\alpha)s+\varepsilon,\;\forall\,\varepsilon>0.
$$
Hence,
$$
B_{\xi}(E,\alpha q+(1-\alpha)p)\leq\alpha B_{\xi}(E,q)+(1-\alpha)B_{\xi}(E,p).
$$
\textbf{d.} For $i=1,2,...,n$ and $\widehat{q_{i}}=(q_{1},...,q_{i-1},q_{i+1},...,q_{k})$ fixed and $p_{i}\leq q_{i}$ denote  
$$
q=(q_{1},...,q_{i-1},q_{i},q_{i+1},...,q_{k})\hbox{ and }
p=(q_{1},...,q_{i-1},p_{i},q_{i+1},...,q_{k}).
$$
For any $A\subseteq E$ and any centered $\varepsilon$-covering $(B(x_{i},r_{i}))_{i}$ of $A$ we have 
$$
(\mu(B(x_{i},r_{i})))^{q}(\nu(B(x_{i},r_{i})))^{t}\leq(\mu(B(x_{i},r_{i})))^{p}(\nu (B(x_{i},r_{i})))^{t},\;\forall\,t\in\mathbb{R}.
$$
Hence,
$$
{\overline{\mathcal{H}}}_{\xi,\varepsilon}^{q,t}(A)\leq{\overline{\mathcal{H}}}_{\xi,\varepsilon}^{p,t}(A),\;\forall\,A\subseteq E.
$$
When $\varepsilon\downarrow0$, we get
$$
{\overline{\mathcal{H}}}_{\xi}^{q,t}(A)\leq{\overline{\mathcal{H}}}_{\xi}^{p,t}(A),\;\forall\,A\subseteq E.
$$
Therefore,
$$
\mathcal{H}_{\xi}^{q,t}(E)\leq\mathcal{H}_{\xi}^{p,t}(E).
$$
As a result,
$$
\mathcal{H}_{\xi}^{q,t}(E)=0,\;\forall t>b_{\xi}(E,p).
$$
Consequently
$$
b_{\xi}(E,q)<t,\;\forall t>b_{\xi}(E,p).
$$
Which means that
$$
b_{\xi}(E,q)\leq b_{\xi}(E,p).
$$
The proof of the monotonicity of $B_{\xi}(E,.)$ and $\Lambda_{\xi}(E,.)$ is similar.
\end{proof}
\begin{theorem}\label{coupures-inequalities} 
Let $\mu=(\mu_{1},\mu_{2},...,\mu_{k})\in\mathcal{P}(\mathbb{R}^{d})^k$ and $\nu\in\mathcal{QAHP}(\mathbb{R}^{d})$. We have\\
\textbf{1.} $0\leq b_{\xi}(q)\leq B_{\xi}(q)\leq \Lambda_{\xi}(q)$, $\forall\,q_{i}<1$, $\forall\,1\leq i\leq k$.\\
\textbf{2.} $b_{\xi}(e_{i})=B_{\xi}(e_{i})=\Lambda_{\xi}(e_{i})=0$ with $e_{i}=(0,0,0...,1,0,0...,0).$\\
\textbf{3.} $b_{\xi}(q)\leq B_{\xi}(q)\leq \Lambda_{\xi}(q)\leq 0$,	$\forall\,q_{i}>1$, $\forall\,1\leq i\leq k$.
\end{theorem}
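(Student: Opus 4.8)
The plan is to read off all three inequality chains from the single relation (\ref{HandPborneeslunparlautre}) together with the critical-exponent description of the dimensions in Proposition \ref{existencecoupures}, and then to pin down the two outer endpoints by direct packing and covering estimates that exploit the sign of $q_i-1$ and the quasi-Ahlfors control on $\nu$. The central chain $b_{\xi}(q)\le B_{\xi}(q)\le\Lambda_{\xi}(q)$ holds for \emph{every} $q$ and requires no hypothesis on the $q_i$: by (\ref{HandPborneeslunparlautre}) we have $\mathcal{H}_{\xi}^{q,t}(E)\le\mathcal{N}_B\mathcal{P}_{\xi}^{q,t}(E)\le\mathcal{N}_B\overline{\mathcal{P}}_{\xi}^{q,t}(E)$, and testing the covering $\{S\}$ in the definition of $\mathcal{P}_{\xi}^{q,t}$ gives $\mathcal{P}_{\xi}^{q,t}(S)\le\overline{\mathcal{P}}_{\xi}^{q,t}(S)$. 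Hence $\overline{\mathcal{P}}_{\xi}^{q,t}(S)=0$ forces $\mathcal{P}_{\xi}^{q,t}(S)=0$ and then $\mathcal{H}_{\xi}^{q,t}(S)=0$, so that $\{t:\overline{\mathcal{P}}_{\xi}^{q,t}(S)=0\}\subseteq\{t:\mathcal{P}_{\xi}^{q,t}(S)=0\}\subseteq\{t:\mathcal{H}_{\xi}^{q,t}(S)=0\}$; passing to the infima that define $b_{\xi}(q)$, $B_{\xi}(q)$ and $\Lambda_{\xi}(q)$ yields the ordering. Thus in each item only one extreme end remains to be controlled.

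For item 3 I would bound $\Lambda_{\xi}(q)$ from above by testing $t=0$. For a centred $\varepsilon$-packing $(B(x_j,r_j))_j$ of $S=S_{(\mu,\nu)}$ the balls are pairwise disjoint, and since every $q_i>1$ and $\mu_l(B_j)\le1$ we have $\mu_l(B_j)^{q_l}\le\mu_l(B_j)$, whence $\prod_l\mu_l(B_j)^{q_l}\le\mu_1(B_j)$. Disjointness of the balls gives $\sum_j\mu_1(B_j)\le\mu_1(\mathbb{R}^d)=1$, so $\overline{\mathcal{P}}_{\xi}^{q,0}(S)\le1<+\infty$; by the jump property of Proposition \ref{existencecoupures}(2) this forces $\Lambda_{\xi}(q)\le0$, and the central chain then delivers $b_{\xi}(q)\le B_{\xi}(q)\le\Lambda_{\xi}(q)\le0$.

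For item 1 I would bound $b_{\xi}(q)$ from below by showing $\mathcal{H}_{\xi}^{q,t}(S)=+\infty$ for every $t<0$. Fixing such a $t$ and a centred $\varepsilon$-covering $(B(x_j,r_j))_j$ of $S$, the quasi-Ahlfors hypothesis supplies $M,\delta>0$ with $\nu(U)\le M|U|^{\alpha}$ for $|U|<\delta$, so for $\varepsilon<\delta$ one has $\nu(B_j)<M\varepsilon^{\alpha}$ and therefore $\nu(B_j)^{t}>M^{t}\varepsilon^{\alpha t}$ because $t<0$. Using $q_l<1$ and $\mu_l(B_j)\le1$ one also has $\mu_l(B_j)^{q_l}\ge\mu_l(B_j)$ for each $l$; combining these with the covering normalisation $\sum_j\mu_l(B_j)\ge\mu_l(\cup_jB_j)\ge\mu_l(S)=1$ produces a lower bound on $\sum_j(\mu(B_j))^{q}\nu(B_j)^{t}$ that diverges like $\varepsilon^{\alpha t}\to+\infty$ as $\varepsilon\downarrow0$, whence $\overline{\mathcal{H}}_{\xi}^{q,t}(S)=+\infty$ and $b_{\xi}(q)\ge0$. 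Item 2 is then the meeting point of the two extremes: at $q=e_i$ the product $(\mu(B_j))^{e_i}$ collapses to the single factor $\mu_i(B_j)$, so the packing estimate of item 3 (at $t=0$) gives $\Lambda_{\xi}(e_i)\le0$ while the covering estimate above (at $t<0$) gives $b_{\xi}(e_i)\ge0$; the central chain squeezes $0\le b_{\xi}(e_i)\le B_{\xi}(e_i)\le\Lambda_{\xi}(e_i)\le0$, forcing all three to vanish.

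The main obstacle is the lower estimate in item 1. When $0<q_l<1$ the product $\prod_l\mu_l(B_j)^{q_l}$ consists of several factors that are each at most $1$, so the pointwise bound $\mu_l(B_j)^{q_l}\ge\mu_l(B_j)$ must be coupled with the covering normalisation $\sum_j\mu_l(B_j)\ge1$ in a way that keeps the whole sum bounded below while $\nu(B_j)^{t}$ blows up; the delicate part is controlling the simultaneous contribution of all $k$ measures rather than a single one. The instance $q=e_i$, where the product reduces to one measure, is exactly the transparent case, which is why item 2 is clean. It is here that the quasi-Ahlfors hypothesis on $\nu$ is indispensable, since the divergent factor $\varepsilon^{\alpha t}$ is the only mechanism driving $\mathcal{H}_{\xi}^{q,t}(S)$ to infinity for $t<0$.
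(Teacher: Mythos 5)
Your central chain via (\ref{HandPborneeslunparlautre}), your item 2, and your item 3 are sound; indeed your item 3 is more direct than the paper's, which establishes $\Lambda_{\xi}(e_{i})\leq 0$ only at $q=e_{i}$ by the disjointness argument and then reaches general $q$ with all $q_{i}>1$ through the coordinatewise monotonicity of $\Lambda_{\xi}$ proved in the preceding theorem, whereas you bound $\overline{\mathcal{P}}_{\xi}^{q,0}(S)\leq 1$ for such $q$ in one step.

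The genuine gap is in item 1 for $k\geq 2$. Your lower estimate rests on $\mu_{l}(B_{j})^{q_{l}}\geq\mu_{l}(B_{j})$ for each $l$ together with $\sum_{j}\mu_{l}(B_{j})\geq 1$ for each \emph{single} index $l$; but these facts control $\sum_{j}\mu_{l}(B_{j})^{q_{l}}$ one measure at a time and say nothing about the sum of products $\sum_{j}\prod_{l}\mu_{l}(B_{j})^{q_{l}}$. For $k=2$ and $q=(1/2,1/2)$, Cauchy--Schwarz gives only the upper bound $\sum_{j}\mu_{1}(B_{j})^{1/2}\mu_{2}(B_{j})^{1/2}\leq\bigl(\sum_{j}\mu_{1}(B_{j})\bigr)^{1/2}\bigl(\sum_{j}\mu_{2}(B_{j})\bigr)^{1/2}$, and if $\mu_{1}$ and $\mu_{2}$ charge nearly disjoint regions the quantity $\mu_{1}(B_{j})\mu_{2}(B_{j})$ can be uniformly small over a fine covering, so $\sum_{j}\prod_{l}\mu_{l}(B_{j})^{q_{l}}$ need not stay bounded away from $0$ as $\varepsilon\downarrow 0$; the divergent prefactor $M^{t}\varepsilon^{\alpha t}$ then multiplies something that may itself vanish, and no conclusion $\mathcal{H}_{\xi}^{q,t}(S)=+\infty$ follows. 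You flag exactly this as ``the delicate part'' but do not resolve it, so your argument establishes $b_{\xi}(q)\geq 0$ only for $q=e_{i}$ (or, more generally, when at most one coordinate of $q$ is positive). The paper closes item 1 differently: it proves the endpoint bounds only at $q=e_{i}$ and then invokes the non-increase of $b_{\xi}$ in each coordinate to compare a general $q$ with $e_{i}$; to complete your write-up you need either that reduction (which requires a coordinatewise comparison $q\leq e_{i}$, i.e.\ some sign control on the remaining $q_{j}$) or a genuinely multivariate lower bound on the product sum, neither of which is supplied by the estimates you give.
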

\begin{proof} Using (\ref{HandPborneeslunparlautre}) we get
$$
b_{\xi}(E,q)\leq B_{\xi}(E,q)\leq\Lambda_{\xi}(E,q),\;\forall q\in\mathbb{R}^{k}.
$$
We are going to prove now that $b_{\xi}(e_{i})\geq 0$ and $\Lambda
_{\xi}(e_{i})\leq 0$ with $e_{i}=(0,0,...,0,1,0,...,0)$.
Indeed if $t<0,$ $0<\varepsilon<\frac{1}{2}$ and $(B(x_{i},r_{i}))_{i}$ is an $\varepsilon$-covering of $E$, then
$$ 
{\sum_i}\mu(B(x_{i},r_{i}))^{e_{i}}\nu(B(x_{i},r_{i}))^t\geq 1\;\;\Longrightarrow\;\; {\overline{\mathcal{H}}}_{\xi,\varepsilon}^{e_{i},t}(E)\geq 1,\,\forall t>0.
$$
Therefore,
$$
t\leq b_{\xi}(e_{i}),\forall t<0.
$$
Consequently,
$$
b_{\xi}(e_{i})\geq 0.
$$
Consider now $t>0,$ $0<\delta <\frac{1}{2}$ and $(B(x_{i},r_{i}))_{i}{}$ is
a centered $\varepsilon$-packing of $E$, then
$$
{\overline{\mathcal{P}}}_{\xi,\varepsilon}^{e_{i},t}(E)\leq\underset{i}{\hbox{ }\sum}\mu(B(x_{i},r_{i}))^{e_{i}}\nu(B(x_{i},r_{i}))^t\leq 1.
$$
Consequently,
$$
{\overline{\mathcal{P}}}_{\xi,\varepsilon}^{e_{i},t}(E)\leq1,\hbox{ }\forall \hbox{ }t>0,
$$
which implies that
$$
\Lambda_{\xi}(e_{i})\leq t,\hbox{ }\forall t>0.
$$
Finally, we get
$$
\Lambda_{\xi}(e_{i})\leq 0.
$$
As a conclusion, whenever $q_{i}>1,\;\forall i=1,2,...,n$, we get $\Lambda_{\xi}(q)<\Lambda_{\xi}(e_{i})\leq 0$ and then
$$
b_{\xi}(q)\leq B_{\xi}(q)\leq\Lambda_{\xi}(q)\leq\Lambda_{\xi}(e_{i})\leq 0.
$$
Similarly, for $q_{i}<1,\;\forall i=1,2,...,n$, we obtain $b_{\xi}(q)>b_{\xi}(e_{i})\geq 0$ and thus
$$
0\leq b_{\xi}(e_{i})\leq b_{\xi}(q)\leq B_{\xi}(q)\leq\Lambda_{\xi}(q).
$$
Furthermore, we have
$$
\forall q\in\mathbb{R}^{k},\hbox{ }b_{\xi}(q)\leq B_{\xi}(q)\leq\Lambda_{\xi}(q).
$$
Then,
$$
0\leq b_{\xi}(e_{i})\leq B_{\xi}(e_{i})\leq\Lambda_{\xi}(e_{i})\leq0,
$$
which implies that
$$
b_{\xi}(e_{i})=B_{\xi}(e_{i})=\Lambda_{\xi}(e_{i})=0.
$$
\end{proof}
Next, we need to introduce the following quantities which will be useful
later. Let $\mu =(\mu_{1},\mu_{2},...,\mu_{k})$ be a vector-valued probability 
measure on $\mathbb{R}^{d}$. For $E\subseteq S_\mu$, and $a>1$ we denote
$$
T_{a}^{j}(\mu)=\displaystyle\lim\sup_{r\downarrow0}\left[\displaystyle\sup_{x\in S_{\mu}}\frac{\mu_{j}(B(x,ar))}{\mu_{j}(B(x,r))}\right],\;1\leq j\leq k,
$$
and for $x\in S_\mu$, $T_{a}^{j}(x)=T_{a}^{j}(\left\{x\right\})$.
We define the set $P_{D}(\mathbb{R}^{n})$ of doubling probability
measures on $\mathbb{R}^{n}$ by
$$
P_{D}(\mathbb{R}^{d})=\left\{\mu\in P(\mathbb{R}^{d});\hbox{ }
T_{a}^{j}(\mu)<\infty\hbox{ for some }a,\hbox{ }\forall\hbox{ }j\right\} .
$$
We denote also
$$
\mathcal{QAHP}_D(\mathbb{R}^{d})=\mathcal{QAHP}(\mathbb{R}^{d})\cap P_{D}(\mathbb{R}^{d}).
$$
Obviously, these sets are independent of $a$.
\begin{proposition}\label{pseudo-convexity-coupure-bmunu}
Let $\mu=(\mu_{1},\mu_{2},...,\mu_{k})\in\mathcal{P}(\mathbb{R}^{d})^k$ and $\nu\in\mathcal{QAHP}_D(\mathbb{R}^{d})$, $E\subset\mathbb{R}^{d}$, $p,q\in\mathbb{R}^{k}$ and $\alpha\in[0,1]$. Then, we have
$$
b_{\xi}(E,\alpha p+(1-\alpha)q)\leq\alpha B_{\xi}(E,p)+(1-\alpha)b_{\xi}(E,q).
$$
\end{proposition}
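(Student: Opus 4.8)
The plan is to read off the proposition from the critical exponents of $\mathcal{H}_\xi$ and $\mathcal{P}_\xi$ and to bridge the two measures by one ball-by-ball Hölder inequality. Since the endpoints $\alpha\in\{0,1\}$ reduce to the already established bound $b_\xi\le B_\xi$ (an identity in one case), I would fix $\alpha\in(0,1)$ and pick arbitrary reals $s>B_\xi(E,p)$ and $t>b_\xi(E,q)$. Writing $q'=\alpha p+(1-\alpha)q$ and $w=\alpha s+(1-\alpha)t$, the whole statement follows once I show $\mathcal{H}_\xi^{q',w}(E)=0$: by Proposition~\ref{existencecoupures} this forces $b_\xi(E,q')\le w$, and letting $s\downarrow B_\xi(E,p)$ and $t\downarrow b_\xi(E,q)$ yields $b_\xi(E,q')\le\alpha B_\xi(E,p)+(1-\alpha)b_\xi(E,q)$, the cases in which one of the two dimensions equals $+\infty$ being trivial.

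Next I would assemble the two pieces of data. From $s>B_\xi(E,p)$ and the definition $\mathcal{P}_\xi=\inf_{E\subseteq\cup_iE_i}\sum_i\overline{\mathcal{P}}_\xi$, the vanishing $\mathcal{P}_\xi^{p,s}(E)=0$ provides a countable cover $E\subseteq\bigcup_nE_n$ with $\sum_n\overline{\mathcal{P}}_\xi^{p,s}(E_n)<+\infty$, so every $\overline{\mathcal{P}}_\xi^{p,s}(E_n)$ is finite. From $t>b_\xi(E,q)$ we have $\mathcal{H}_\xi^{q,t}(E)=0$, hence $\overline{\mathcal{H}}_\xi^{q,t}(F)=0$ for every $F\subseteq E$, because $\mathcal{H}_\xi=\sup_{F\subseteq E}\overline{\mathcal{H}}_\xi$. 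As $\mathcal{H}_\xi^{q',w}$ is a genuine (metric, regular) outer measure, it is countably subadditive, so it suffices to prove $\mathcal{H}_\xi^{q',w}(E\cap E_n)=0$ for each $n$, that is $\overline{\mathcal{H}}_\xi^{q',w}(F)=0$ for every $F\subseteq E\cap E_n$.

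The core is the resulting local estimate. Fix such an $F$ and $\varepsilon,\eta>0$. Since $\overline{\mathcal{H}}_\xi^{q,t}(F)=0$, I choose for every point of $F$ a ball of radius $<\varepsilon$ compatible with a centered $\varepsilon$-covering realizing $\sum(\mu)^q(\nu)^t<\eta$, and apply the Besicovitch covering theorem to extract $\mathcal{N}_B$ subfamilies $C_1,\dots,C_{\mathcal{N}_B}$, each of pairwise disjoint balls, whose union covers $F$. Each $C_l$ is then a centered $\varepsilon$-packing with centres in $E_n$, giving $\sum_{B\in C_l}(\mu(B))^p(\nu(B))^s\le\overline{\mathcal{P}}_{\xi,\varepsilon}^{p,s}(E_n)$, while the efficiency keeps $\sum_{B\in C_l}(\mu(B))^q(\nu(B))^t<\eta$. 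Using $(\mu(B))^{q'}(\nu(B))^{w}=[(\mu(B))^p(\nu(B))^s]^{\alpha}[(\mu(B))^q(\nu(B))^t]^{1-\alpha}$ together with Hölder's inequality (exponents $1/\alpha$ and $1/(1-\alpha)$) on each $C_l$, I obtain
\[
\overline{\mathcal{H}}_{\xi,\varepsilon}^{q',w}(F)\le\sum_{l=1}^{\mathcal{N}_B}\Bigl(\sum_{B\in C_l}(\mu(B))^p(\nu(B))^s\Bigr)^{\alpha}\Bigl(\sum_{B\in C_l}(\mu(B))^q(\nu(B))^t\Bigr)^{1-\alpha}\le\mathcal{N}_B\,\overline{\mathcal{P}}_{\xi,\varepsilon}^{p,s}(E_n)^{\alpha}\,\eta^{1-\alpha}.
\]
Since $\overline{\mathcal{P}}_{\xi,\varepsilon}^{p,s}(E_n)\to\overline{\mathcal{P}}_\xi^{p,s}(E_n)<+\infty$ as $\varepsilon\downarrow0$, letting first $\eta\downarrow0$ and then $\varepsilon\downarrow0$ gives $\overline{\mathcal{H}}_\xi^{q',w}(F)=0$, closing the chain.

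I expect the genuine obstacle to be exactly the reconciliation carried out inside the local estimate: the smallness of $\sum(\mu)^q(\nu)^t$ is available only along true coverings of $F$, whereas the finite bound $\overline{\mathcal{P}}_\xi^{p,s}(E_n)$ is meaningful only for disjoint packings, and these two structures are in tension. The Besicovitch theorem (its fixed number $\mathcal{N}_B$ of disjoint subfamilies) is what forces a single family to play both roles, but restoring a cover of $F$ from disjoint balls requires a controlled comparison of a ball with a slight dilate of it; this is precisely where the doubling hypothesis $\nu\in\mathcal{QAHP}_D(\mathbb{R}^d)$ is indispensable, since it converts the comparison of $\nu$ on a ball and on its dilate into a multiplicative constant that is absorbed harmlessly in the limits $\eta,\varepsilon\downarrow0$, while the quasi-Ahlfors regularity (as in Lemma~\ref{coupures-1}) keeps all the exponents in a range where these manipulations remain legitimate.
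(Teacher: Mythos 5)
Your overall architecture is the right one, and is essentially the paper's: reduce to showing that $\mathcal{H}_{\xi}^{q',w}(F)$ is zero (or finite) for $F$ ranging over pieces of a countable decomposition of $E$, obtain a packing-controlled family from $s>B_{\xi}(E,p)$ and an efficient centered covering from $t>b_{\xi}(E,q)$, and merge the two by a ball-by-ball H\"older inequality. But the merging step, as written, does not work. The efficiency $\sum_i\mu(B(x_i,r_i))^q\nu(B(x_i,r_i))^t<\eta$ is attached to one specific \emph{countable} centered covering of $F$, whereas the Besicovitch covering theorem takes as input a family containing a ball centered at \emph{every} point of a set and returns disjoint subfamilies whose union covers that set of centers. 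Applied to your situation it can only act on the countable set $\{x_i\}$ of centers, and the extracted subfamilies $C_1,\dots,C_{\mathcal{N}_B}$ then cover $\{x_i\}$, not $F$. Hence $\bigcup_l C_l$ is in general not an admissible centered $\varepsilon$-covering of $F$, and the first inequality in your displayed chain, $\overline{\mathcal{H}}_{\xi,\varepsilon}^{q',w}(F)\le\sum_l(\cdots)^{\alpha}(\cdots)^{1-\alpha}$, is unjustified. You cannot instead start from a ball at every point of $F$, because the smallness of $\sum\mu^q\nu^t$ has no meaning for an uncountable family; your phrase ``a ball for every point of $F$ compatible with a centered covering realizing $\sum<\eta$'' tries to have both and cannot.

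You name this tension in your closing paragraph but do not resolve it, and its resolution is the actual content of the paper's proof. There, one extracts from the efficient covering a disjoint subfamily $J$ via the $5r$-covering (Vitali) lemma, so that the dilated balls $B(x_j,5r_j)$, $j\in J$, still cover the set while the undilated balls $B(x_j,r_j)$, $j\in J$, form a centered packing retaining the efficiency; H\"older is applied to the dilated family, and passing from $\mu_j(B(x,5r))$, $\nu(B(x,5r))$ back to $\mu_j(B(x,r))$, $\nu(B(x,r))$ costs a factor $m^{|\alpha(p,q)|+\alpha_\varepsilon(t,s)}$ on the stratum $E_m=\{x\in E:\ \mu_j(B(x,5r))/\mu_j(B(x,r))<m\ \forall j,\ \nu(B(x,5r))/\nu(B(x,r))<m,\ 0<r<1/m\}$, after which one concludes via $E=\bigcup_mE_m$ and the $\sigma$-stability of $b_{\xi}$. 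Note that this dilation step requires doubling-type control on every component $\mu_j$ (the exponent $q'$ acts on the $\mu_j$'s), not only on $\nu$ as your closing remark suggests; this is exactly why the stratification by the pointwise doubling constant $m$ is introduced rather than a single global constant. Your use of the cover $(E_n)$ coming from $\mathcal{P}_{\xi}^{p,s}(E)=0$ in place of the paper's arbitrary cover $(F_i)$ with the $2^{-i}$ approximation of $\overline{\mathcal{P}}_{\xi}^{p,t+\varepsilon}(F_i)$ is a harmless variant; the essential missing ingredient is the dilation-plus-doubling mechanism.
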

\begin{proof} Let $t=B_{\xi}(E,p)$ and $s=b_{\xi}(E,q)$. We will prove that 
$$
b_{\xi}(E,\alpha p+(1-\alpha)q)\leq\alpha t+(1-\alpha)s,\hbox{ }\forall\hbox{ }\varepsilon>0.
$$
Let $\varepsilon>0$, $m\in\mathbb{N}^{\ast}$, and denote
$$
E_{m}=\{x\in E;\;\frac{\mu_{j}(B(x,5r))}{\mu_{j}(B(x,r))}<m,\,\forall j,\,\frac{\nu(B(x,5r))}{\nu(B(x,r))}<m,\,0<r<\frac{1}{m}\}.
$$
As $E={\cup_m}E_{m}$, we shall prove that 
$$
\mathcal{H}_{\xi}^{\alpha p+(1-\alpha )q,\alpha t+(1-\alpha)s+\varepsilon}(E_{m})<\infty\hbox{ , }\forall\hbox{ }m\in\mathbb{N}^{\ast}.
$$
So, let $F\subset E_{m}$ and $(F_{i})_{i}$ be an arbitrary covering of $F$, and $\delta>0$. Let next $\varepsilon>0$, $i\in\mathbb{N}$, and $\delta_i>0$ be such that
$$
{\overline{\mathcal{P}}}_{\xi,\delta_{i}}^{p+\varepsilon,t}(F_{i})\leq{\overline{\mathcal{P}}}_{\xi}^{p+\varepsilon,t}(F_{i})+\frac{1}{2^{i}}.
$$
Since $F_{i}\cap F\subset F\subset E$, it holds that
$$
b_{\xi}(F_{i}\cap F,q)\leq b_{\xi}(E,q)=s<s+\varepsilon.
$$
Consequently
$$
b_{\xi}(F_{i}\cap F,q)<s+\varepsilon,
$$
which yields that
$$
{\overline{\mathcal{H}}}_{\xi}^{q,s+\varepsilon}(F_{i}\cap F)=0.
$$
There exists consequently a centered $(\frac{\delta}{5}\wedge\frac{1}{m}\wedge\delta_{i})$-covering $(B(x_{ij},r_{ij}))_{j\in I_{i}}$ of $F_{i}\cap F$ satisfying
$$
\displaystyle\sum_{j\in I_{i}}\mu(B(x_{ij},r_{ij}))^q\nu(B(x_{ij},r_{ij}))^{s+\varepsilon}\leq\frac{1}{2^{i}}.
$$
Let now $J_{i}\subset I_{i}$ composed of disjoint balls such that
$$
\displaystyle\bigcup_{j\in I_{i}}B(x_{ij},r_{ij})\subset\displaystyle\bigcup_{j\in J_{i}}B(x_{ij},5r_{ij}).
$$
Since $(B(x_{ij},5r_{ij}))_{j\in J_{i}}$ is a centered $\delta$-covering of $F_{i}\cap F$ and $(B(x_{ij},r_{ij}))_{j\in J_{i}}$ is a
centered $\delta_{i}$-packing of $F_{i}$, we obtain
\begin{equation}\label{eq-utile}
\begin{array}{lll}
\displaystyle{\overline{\mathcal{H}}}_{\xi,\delta}^{\alpha(p,q),\alpha_\varepsilon(t,s)}(F)
&\leq&\displaystyle{\overline{\mathcal{H}}}_{\xi,\delta}^{\alpha(p,q),\alpha_\varepsilon(t,s)}\bigl(\displaystyle\bigcup_i\displaystyle\bigcup_{j\in J_{i}}B(x_{ij},5r_{ij})\bigr)\\
&\leq&\displaystyle\sum_i\displaystyle\sum_{j\in J_{i}}\left[\mu(B(x_{ij},5r_{ij}))\right]^{\alpha(p,q)}\left[\nu(B(x_{ij},5r_{ij}))\right]^{\alpha_\varepsilon(t,s)},
\end{array}
\end{equation}
where $\alpha(p,q)=\alpha p+(1-\alpha)q$ and $\alpha_\varepsilon(t,s)=\alpha t+(1-\alpha)s+\varepsilon$. Consequently, whenever $\alpha(p,q)\in(0,+\infty)^k$ and $\alpha_\varepsilon(t,s)\in(0,+\infty)$, we get 
$$
\left[\mu(B(x_{ij},5r_{ij}))\right]^{\alpha(p,q)}\leq\,m^{|\alpha(p,q)|}\bigl[\mu(B(x_{ij},r_{ij}))\bigr]^{\alpha(p,q)},
$$
and
$$
\left[\nu(B(x_{ij},5r_{ij}))\right]^{\alpha_\varepsilon(t,s)}\leq\,m^{\alpha_\varepsilon(t,s)}\bigl[\nu(B(x_{ij},r_{ij}))\bigr]^{\alpha_\varepsilon(t,s)}.
$$
Consequently, using (\ref{eq-utile}), we get
$$
{\overline{\mathcal{H}}}_{\xi,\delta}^{\alpha(p,q),\alpha_\varepsilon(t,s)}(F)\leq m^{|\alpha(p,q)|+\alpha_\varepsilon(t,s)}({\sum_i}({\overline{\mathcal{P}}}_{\xi}^{p,t+\varepsilon}(F_{i})+\frac{1}{2^{i}}))^{\alpha},
$$
which yields that
$$
{\overline{\mathcal{H}}}_{\xi}^{\alpha(p,q),\alpha_\varepsilon(t,s)}(F)\leq m^{|\alpha(p,q)|+\alpha_\varepsilon(t,s)}({\sum_i}{\overline{\mathcal{P}}}_{\xi}^{p,t+\varepsilon }(F_{i})+1)^{\alpha}.
$$
Hence, $\forall \hbox{ }F\subseteq E_{m}$, we get
$$
{\overline{\mathcal{H}}}_{\xi}^{\alpha(p,q),\alpha_\varepsilon(t,s)}(F)\leq m^{|\alpha(p,q)|+\alpha_\varepsilon(t,s)}({{\mathcal{P}}}_{\xi}^{p,t+\varepsilon}(F)+1)^{\alpha},
$$
which implies that 
$$
{\overline{\mathcal{H}}}_{\xi}^{\alpha(p,q),\alpha_\varepsilon(t,s)}(E_m)\leq m^{|\alpha(p,q)|+\alpha_\varepsilon(t,s)}({{\mathcal{P}}}_{\xi}^{p,t+\varepsilon}(E_m)+1)^{\alpha}.
$$
Consequently, 
$$
{\overline{\mathcal{H}}}_{\xi}^{\alpha(p,q),\alpha_\varepsilon(t,s)}(E_m)<\infty,\forall m.
$$
Therefore,
$$
b_{\xi}(E_m,\alpha(p,q))\leq\alpha_\varepsilon(t,s),\forall\varepsilon>0,\,\forall m,
$$
which yields finally that
$$
b_{\xi}(E,\alpha p+(1-\alpha)q)\leq\alpha t+(1-\alpha)s=\alpha B_{\xi}(E,p)+(1-\alpha)b_{\xi}(E,q).
$$
\end{proof}
\begin{theorem}\label{corollaire-1-coupures}
Let $\mu=(\mu_{1},\mu_{2},...,\mu_{k})\in\mathcal{P}(\mathbb{R}^{d})^k$ and $\nu\in\mathcal{QAHP}_D(\mathbb{R}^{d})$, $q\in\mathbb{R}^{k}$ and $E\subseteq S_{\mu}\cap S_{\nu}$. The following assertions hold.\\
1. Whenever $q_{i}\leq 0,\,\forall k$, we have
\begin{equation}\label{eq1-proof-cor-1coupures}
b_{\xi}^{q}(E)\geq\dim_{\nu}(E)\left(1-\frac{|q|}{k}\right).
\end{equation}
2. Whenever $0\leq q_{i}\leq 1,\,\forall k$, we have
\begin{equation}\label{eq2-proof-cor-1coupures}
b_{\xi}^{q}(E)\leq\dim_{\nu}(E)\left(1-\frac{|q|}{k}\right)\leq\frac{n}{k}(k-|q|).
\end{equation}
3. Whenever $q_{i}\geq 1,\,\forall k$, we have
\begin{equation}\label{eq3-proof-cor-1coupures}
b_{\xi}^{q}(E)\geq\frac{\beta}{\beta-1}\dim_{\nu}(E)\geq \frac{\beta}{\beta-1}n,\mbox{ \ with }\beta ={\max_i}(1-\frac{1}{q_{i}}).
\end{equation}
\end{theorem}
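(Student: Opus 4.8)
The plan is to compare the mixed sums $\sum_i\prod_j\mu_j(B_i)^{q_j}\,\nu(B_i)^t$ with the pure $\nu$-sums $\sum_i\nu(B_i)^s$ that define $\dim_\nu(E)=\dim_\xi^0(E)=b_\xi^0(E)$, and to read off each of the three inequalities from the critical-exponent characterisation of $\dim_\nu$ given in Proposition \ref{existencecoupures}. The driving mechanism is the sign of the exponents: since every $\mu_j$ is a probability measure we have $\mu_j(B)\le 1$, so $\mu_j(B)^{q_j}\ge 1$ when $q_j\le 0$ and $\mu_j(B)^{q_j}\le 1$ when $q_j\ge 0$. This dichotomy is exactly what turns the comparison into a \emph{lower} bound in cases 1 and 3 and into an \emph{upper} bound in case 2. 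Throughout, the passage between centred coverings and packings (and between radii $r$ and $5r$) is handled by the doubling property of $\nu\in\mathcal{QAHP}_D(\mathbb{R}^d)$ together with the Besicovitch covering theorem and its constant $\mathcal N_B$, precisely as in the proof of Proposition \ref{pseudo-convexity-coupure-bmunu}; this is where the quasi-Ahlfors and doubling hypotheses on $\nu$ are indispensable.

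For the upper bound (case 2, $0\le q_i\le 1$) I would first perform a symmetric reduction to a single measure. Writing $\prod_j\mu_j(B_i)^{q_j}=\prod_j(\mu_j(B_i)^{kq_j})^{1/k}$ and applying the arithmetic--geometric mean inequality gives $\prod_j\mu_j(B_i)^{q_j}\le\frac1k\sum_j\mu_j(B_i)^{kq_j}$, which treats the $k$ measures on an equal footing and is the source of the averaging factor $1/k$, hence of the term $|q|/k=\frac1k\sum_j q_j$. After this step each summand is controlled by single-measure quantities $\mu_j(B_i)^{kq_j}\nu(B_i)^t$, and for a fixed covering I would combine $\mu_j(B_i)\le 1$ with the mass-distribution estimate $\sum_i\mu_j(B_i)\le 1$ on disjoint families (and $\le\mathcal N_B$ on general coverings) inside a H\"older splitting of $\nu(B_i)^t$. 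Choosing the conjugate exponent so that each $\mu_j$-factor enters to the first power forces the residual $\nu$-power to cross $\dim_\nu(E)$ exactly at $t=\dim_\nu(E)\,(1-|q|/k)$, whence $\overline{\mathcal H}_\xi^{q,t}(E)=0$ above this value and $b_\xi^q(E)\le\dim_\nu(E)(1-|q|/k)$. The closing inequality $\dim_\nu(E)(1-|q|/k)\le\frac{n}{k}(k-|q|)=n(1-|q|/k)$ then follows from the a priori bound $\dim_\nu(E)\le n$ (indeed Lemma \ref{coupures-1} already yields $\mathcal H_\xi^{0,1}(E)<\infty$, i.e.\ $\dim_\nu(E)\le 1$) together with the nonnegativity of the prefactor $1-|q|/k\ge 0$ on this range.

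For the lower bounds (cases 1 and 3) I would run the comparison in the opposite direction: for $s<\dim_\nu(E)$ the pure sum $\sum_i\nu(B_i)^s$ is large, and a H\"older inequality bounds it from above by a positive power of the mixed sum times a factor that I keep controlled using $\mu_j(B)\le 1$ and the mass-distribution estimates. When $q_i\le 0$ the $\mu_j$-factors cooperate, and the same symmetric AM--GM splitting across the $k$ measures produces the constant $1-|q|/k\ge 1$ of case 1; showing that every admissible centred covering keeps $\sum_i\prod_j\mu_j(B_i)^{q_j}\nu(B_i)^t$ bounded below then yields $b_\xi^q(E)\ge\dim_\nu(E)(1-|q|/k)$. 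In case 3 ($q_i\ge 1$) only the extremal index survives the estimate, which is why the constant collapses to $\tfrac{\beta}{\beta-1}$: a short computation from $\beta=\max_i(1-1/q_i)=1-1/q^\ast$, with $q^\ast=\max_i q_i$, gives $\tfrac{\beta}{\beta-1}=1-q^\ast\le 0$, and the final inequality $\ge\tfrac{\beta}{\beta-1}\,n$ again uses $\dim_\nu(E)\le n$ together with the now-nonpositive prefactor.

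The main obstacle, and where the real work lies, is the precise bookkeeping of the H\"older and AM--GM exponents so that the residual $\nu$-power lands exactly on $\dim_\nu(E)$ times the claimed constant; one must verify that the symmetric reduction to a single measure does not lose the correct dependence on $|q|$ and $k$, and that the covering-to-packing passage is carried out uniformly over all admissible coverings. I expect cases 1 and 3 to be the delicate ones, since a lower bound requires controlling the infimum over \emph{every} centred covering from below, rather than merely exhibiting one efficient covering as in the upper bound of case 2.
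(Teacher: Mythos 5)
Your overall strategy diverges from the paper's, and as written it has a real gap. The paper's proof of all three items is a short interpolation argument: it invokes Proposition \ref{pseudo-convexity-coupure-bmunu}, namely $b_{\xi}(E,\alpha p+(1-\alpha)q)\leq\alpha B_{\xi}(E,p)+(1-\alpha)b_{\xi}(E,q)$, with the endpoints chosen among $e_i$, $q_ie_i$ and $0$ (e.g.\ for item 1, $\alpha=\frac{-q_i}{1-q_i}$ writes $0$ as a convex combination of $e_i$ and $q_ie_i$), uses $B_{\xi}(e_i)=0$ from Theorem \ref{coupures-inequalities} and $b_\xi(0)=\dim_\nu(E)$, and then the coordinatewise monotonicity of $b_\xi$ to pass from $q_ie_i$ to $q$. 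This yields $(1-q_i)\dim_\nu(E)\leq b_\xi(q)$ (resp.\ $b_\xi(q)\leq(1-q_i)\dim_\nu(E)$) for \emph{each} $i$, and the factor $1-|q|/k$ arises purely from averaging these $k$ one-coordinate bounds over $i$. All of the covering-versus-packing work with the $5r$-lemma and the doubling hypothesis is already encapsulated in Proposition \ref{pseudo-convexity-coupure-bmunu}; the theorem itself needs no new covering argument.

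The concrete problem with your route is the AM--GM step. Writing $\prod_j\mu_j(B_i)^{q_j}\leq\frac1k\sum_j\mu_j(B_i)^{kq_j}$ replaces the exponents $q_j\in[0,1]$ by $kq_j$, which can exceed $1$ as soon as $k\geq2$; the subsequent H\"older/mass-distribution estimate (which needs each $\mu_j$-exponent in $[0,1]$ so that $\sum_i\mu_j(B_i)\leq\mathcal N_B$ can absorb it) then fails, and even where it applies the resulting threshold is $t>\dim_\nu(E)\max_j(1-kq_j)$, not $t>\dim_\nu(E)(1-|q|/k)$. So the AM--GM averaging is not the source of the $|q|/k$; the correct elementary substitute in case 2 is to drop all factors but one (using $\mu_j(B)^{q_j}\leq1$ for $j\neq i$), prove the single-measure bound $b_{(\mu_i,\nu)}(q_i)\leq(1-q_i)\dim_\nu(E)$, and average over $i$. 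For the lower bounds of cases 1 and 3 the situation is worse: the trivial comparison $\prod_j\mu_j(B_i)^{q_j}\geq1$ only gives $b_\xi(q)\geq\dim_\nu(E)$, which is strictly weaker than $(1-|q|/k)\dim_\nu(E)$ when $|q|<0$, and your ``reverse H\"older over every admissible covering'' sketch does not identify the interpolation (between the Hausdorff-type quantity at one parameter and the \emph{packing}-type quantity at another) that actually produces these constants. Your algebra for case 3, $\frac{\beta}{\beta-1}=1-\max_iq_i$, and the observation that Lemma \ref{coupures-1} forces $\dim_\nu(E)\leq1$ are both correct, but the core mechanism of the proof is missing.
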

\begin{proof} \textbf{1.} For $q=(q_{1},...,q_{k})\in\mathbb{R}^{k}$ take in Proposition \ref{pseudo-convexity-coupure-bmunu}, $p=e_i$, $\widetilde{q}_i=q_ie_i$ and $\alpha=\frac{-q_{i}}{1-q_{i}}$. As $q_{i}\leq0,\,\forall i$, we get in one hand
$$
b_{\xi}(0)\leq\alpha B_{\xi}(e_i)+(1+\frac{q_{i}}{1-q_{i}})b_{\xi}(\widetilde{q}_i).
$$
Recall now that $B_{\xi}(e_i)=0$. Therefore,
$$
(1-q_{i})b_{\xi}(0)\leq b_{\xi}(\widetilde{q}_i)\leq b_{\xi}(q),
$$
which implies that
$$
(1-q_{i})\dim_{\nu}(E)\leq b_{\xi}^{q}(E).
$$
The summation on $i=1,2,...,k$ gives
$$
b_{\xi}^{q}(E)\geq\dim_{\nu}(E)\bigl(1-\frac{|q|}{k}\bigr).
$$
\textbf{2.} For $q=(q_{1},...,q_{k})\in\mathbb{R}^{k}$ take in Proposition \ref{pseudo-convexity-coupure-bmunu}, $p=e_i$, $\widetilde{q}_i=q_ie_i$, and $\alpha=q_{i}$, and follow similar techniques as in assertion \textbf{1.}\\
\textbf{3.} For $q=(q_{1},...,q_{k})\in\mathbb{R}^{k}$ take in Proposition \ref{pseudo-convexity-coupure-bmunu}, $p=0$, and $\alpha=\beta$, and follow as usual similar techniques as previously.
\end{proof}
\begin{theorem}\label{corollaire-2-coupures}
Let $\xi=(\mu,\nu)=(\mu_{1},\mu_{2},...,\mu_{k},\nu)\in\mathcal{P}(\mathbb{R}^{d})^k\times\in\mathcal{QAHP}_D(\mathbb{R}^{d})$, $q\in\mathbb{R}^{k}$, and $E\subseteq S_{\mu}\cap S_{\nu}$. The following assertions hold.\\
1. Whenever $q_{i}\leq 0,\,\forall k$, we have
\begin{equation}\label{eq1-proof-cor-2coupures}
B_{\xi}^{q}(E)\geq\,Dim_{\nu}(E)\left(1-\frac{|q|}{k}\right).
\end{equation}
2. Whenever $0\leq q_{i}\leq 1,\,\forall k$, we have
\begin{equation}\label{eq2-proof-cor-2coupures}
B_{\xi}^{q}(E)\leq\,Dim_{\nu}(E)\left(1-\frac{|q|}{k}\right)\leq\frac{n}{k}(k-|q|).
\end{equation}
3. Whenever $q_{i}\geq 1,\,\forall k$, we have
\begin{equation}\label{eq3-proof-cor-2coupures}
B_{\xi}^{q}(E)\geq\frac{\beta}{\beta-1}\,Dim_{\nu}(E)\geq \frac{\beta}{\beta-1}n,\mbox{ \ with }\beta =\hbox{ }\underset{i}{\max}(1-\frac{1}{q_{i}}).
\end{equation}
\end{theorem}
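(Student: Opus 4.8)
The plan is to transcribe the proof of Theorem \ref{corollaire-1-coupures} almost verbatim, replacing the Hausdorff dimension $b_{\xi}$ by the packing dimension $B_{\xi}$ and $\dim_{\nu}(E)$ by $Dim_{\nu}(E)$ throughout. The one structural change is that, where the Hausdorff argument invoked the pseudo-convexity inequality of Proposition \ref{pseudo-convexity-coupure-bmunu} (a statement about $b_{\xi}$), I would instead use the genuine convexity of the map $p\mapsto B_{\xi}(E,p)$ already established, namely $$B_{\xi}(E,\alpha p+(1-\alpha)q)\leq\alpha B_{\xi}(E,p)+(1-\alpha)B_{\xi}(E,q),\qquad\alpha\in[0,1].$$ The remaining ingredients are all available: the boundary value $B_{\xi}(e_{i})=0$ from Theorem \ref{coupures-inequalities}, the identity $B_{\xi}(E,0)=Dim_{\nu}(E)$ (the case $q=0$ recovers the classical packing dimension), the coordinatewise non-increasing monotonicity of $B_{\xi}(E,\cdot)$, and the universal bound $Dim_{\nu}(E)\leq n$ for subsets of $\mathbb{R}^{d}$.

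For assertion 1, I would fix $i$ and choose $p=e_{i}$, $\widetilde{q}_{i}=q_{i}e_{i}$ and $\alpha=\frac{-q_{i}}{1-q_{i}}\in[0,1)$ (admissible since $q_{i}\leq0$), for which $\alpha e_{i}+(1-\alpha)\widetilde{q}_{i}=0$. Convexity together with $B_{\xi}(e_{i})=0$ gives $B_{\xi}(E,0)\leq(1-\alpha)B_{\xi}(E,\widetilde{q}_{i})$, i.e. $(1-q_{i})Dim_{\nu}(E)\leq B_{\xi}(E,\widetilde{q}_{i})$; monotonicity (all $q_{j}\leq0$) then yields $B_{\xi}(E,\widetilde{q}_{i})\leq B_{\xi}^{q}(E)$, and summing $(1-q_{i})Dim_{\nu}(E)\leq B_{\xi}^{q}(E)$ over $i=1,\dots,k$ gives the claimed $B_{\xi}^{q}(E)\geq Dim_{\nu}(E)(1-\frac{|q|}{k})$. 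Assertion 2 is symmetric: with $p=e_{i}$ and $\alpha=q_{i}\in[0,1]$ one writes $\widetilde{q}_{i}=q_{i}e_{i}+(1-q_{i})\cdot0$, so convexity and $B_{\xi}(e_{i})=0$ give $B_{\xi}(E,\widetilde{q}_{i})\leq(1-q_{i})Dim_{\nu}(E)$, while now monotonicity runs the opposite way (all $q_{j}\geq0$) so $B_{\xi}^{q}(E)\leq B_{\xi}(E,\widetilde{q}_{i})$; summation gives the first inequality, and the second follows from $Dim_{\nu}(E)\leq n$ and $1-\frac{|q|}{k}\geq0$.

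For assertion 3 I would take $p=0$ and $\alpha=\beta=\max_{i}(1-\frac{1}{q_{i}})\in[0,1)$, so that $1-\beta=\frac{1}{\max_{i}q_{i}}$ and the vector $(1-\beta)q$ has every coordinate $q_{j}/\max_{i}q_{i}$ lying in $[0,1]$. Convexity gives $B_{\xi}(E,(1-\beta)q)\leq\beta\,Dim_{\nu}(E)+(1-\beta)B_{\xi}^{q}(E)$; since all coordinates of $(1-\beta)q$ are $\leq1$ one has $B_{\xi}(E,(1-\beta)q)\geq0$, and dividing the resulting inequality $0\leq\beta\,Dim_{\nu}(E)+(1-\beta)B_{\xi}^{q}(E)$ by $1-\beta>0$ produces $B_{\xi}^{q}(E)\geq\frac{\beta}{\beta-1}Dim_{\nu}(E)$; finally $\frac{\beta}{\beta-1}\leq0$ reverses $Dim_{\nu}(E)\leq n$ to give $\frac{\beta}{\beta-1}Dim_{\nu}(E)\geq\frac{\beta}{\beta-1}n$. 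The hard part will be bookkeeping the direction of each inequality when dividing or multiplying by the negative quantities $\frac{\beta}{\beta-1}$ and when applying monotonicity, and in particular justifying the boundary non-negativity $B_{\xi}(E,(1-\beta)q)\geq0$ in assertion 3, where at least one coordinate equals $1$ exactly; this I would obtain from Theorem \ref{coupures-inequalities} extended to the boundary by the continuity of the convex function $B_{\xi}(E,\cdot)$. Everything else is a direct transcription of the Hausdorff computation.
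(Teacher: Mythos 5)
Your proof is correct, and it is essentially the transcription that the paper itself intends: the paper's entire proof of this theorem is the sentence ``The proof follows similar techniques as in Theorem \ref{corollaire-1-coupures}.'' The one substantive choice you make --- replacing Proposition \ref{pseudo-convexity-coupure-bmunu} by the genuine convexity of $p\mapsto B_{\xi}(E,p)$ from part \textbf{c.} of the properties theorem --- is not just admissible but arguably necessary: the pseudo-convexity proposition bounds $b_{\xi}(E,\alpha p+(1-\alpha)q)$ and says nothing about $B_{\xi}$, so a literal repetition of the Hausdorff argument would not close for the packing dimension without either a packing analogue of that proposition or the convexity you invoke. Your route also buys something the paper's phrasing obscures: the pure convexity of $B_{\xi}(E,\cdot)$ is proved without the doubling hypothesis, so assertions 1--3 for $B_{\xi}$ hold for $\nu\in\mathcal{QAHP}(\mathbb{R}^{d})$ rather than $\mathcal{QAHP}_D(\mathbb{R}^{d})$. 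Your choices of $p=e_i$, $\widetilde{q}_i=q_ie_i$, $\alpha=\frac{-q_i}{1-q_i}$ (resp.\ $\alpha=q_i$, resp.\ $p=0$, $\alpha=\beta$) are exactly those of the paper, the monotonicity directions are bookkept correctly, and the sign reversal by $\frac{\beta}{\beta-1}\leq0$ in assertion 3 is handled properly.

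The one point that deserves care is the one you flag yourself: the non-negativity $B_{\xi}(E,(1-\beta)q)\geq0$ in assertion 3, where the coordinate achieving $\max_i q_i$ makes the corresponding entry of $(1-\beta)q$ equal to $1$ exactly, while Theorem \ref{coupures-inequalities} only asserts non-negativity for $q_i<1$. Continuity of the convex function $B_{\xi}(E,\cdot)$ requires knowing it is finite in a neighbourhood, which the paper does not establish; a cleaner patch is to run the convexity step with $\alpha=\beta+\epsilon$ for small $\epsilon>0$, so that all coordinates of $(1-\alpha)q$ are strictly below $1$, and let $\epsilon\downarrow0$ at the end. This gap is no worse than (indeed inherited from) the paper's own elliptical treatment, and does not affect the overall correctness of your argument.
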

The proof follows similar techniques as in Theorem \ref{corollaire-1-coupures}.
\begin{proposition}\label{lowerboundbmunu} Let $\xi=(\mu,\nu)=(\mu_{1},\mu_{2},...,\mu_{k},\nu)\in\mathcal{P}(\mathbb{R}^{d})^k\times\mathcal{AHP}_D(\mathbb{R}^{d})$ be Radon vector-valued measure on $\mathbb{R}^{d}$ with compact support, and such that $S_{\mu}\subseteq S_{\nu}$. Assume further that $\mu$ is absolutely continuous relatively to the Lebesgue measure on $S_{\mu_{i}}$. Then, for all Borel set $E\subset S_{\mu}$ such that $\mu_i(E)>0$, $\forall i$ and $q\in\mathbb{R}^k$ with $|q|\in[0,1]$, we have
$$
\alpha b_{\xi}(E,q)\geq(1-|q|),
$$
with $|q|=q_{1}+q_{2}+...+q_{k}$ and $\alpha$ is the Ahlfors regularity index of $\nu$.
\end{proposition}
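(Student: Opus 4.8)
The plan is to establish the lower bound on $b_{\xi}(E,q)=dim_{\xi}^{q}(E)$ by a mass distribution (Frostman-type) argument: for every $t$ strictly below the target value I would exhibit a positive measure whose local behaviour forces $\mathcal{H}_{\xi}^{q,t}(E)>0$. By Proposition \ref{existencecoupures} (item 1), positivity of $\mathcal{H}_{\xi}^{q,t}(E)$ rules out $t>b_{\xi}(E,q)$, hence gives $t\leq b_{\xi}(E,q)$; letting $t$ increase to the threshold then yields the claim.

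First I would record the two scaling estimates furnished by the hypotheses. Since $\nu\in\mathcal{AHP}_{D}(\mathbb{R}^{d})$ is Ahlfors of index $\alpha$ and $E\subseteq S_{\mu}\subseteq S_{\nu}$, there are constants $0<c\leq C$ and $r_{1}>0$ with $c\,r^{\alpha}\leq\nu(B(x,r))\leq C\,r^{\alpha}$ for all $x\in E$ and $0<r<r_{1}$. On the other hand, because $\mu_{i}(E)>0$ and $\mu_{i}\ll\mathcal{L}^{d}$, we get $\mathcal{L}^{d}(E)>0$, and the Lebesgue differentiation theorem provides, for $\mathcal{L}^{d}$-a.e. $x\in E$, a finite positive density $f_{i}(x)=\frac{d\mu_{i}}{d\mathcal{L}^{d}}(x)$ with $\mu_{i}(B(x,r))\sim f_{i}(x)\,\omega_{d}\,r^{d}$ as $r\downarrow0$, where $\omega_{d}=\mathcal{L}^{d}(B(0,1))$.

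The decisive step is to upgrade these pointwise limits to uniform two-sided bounds on a set of positive Lebesgue measure. Using Egorov's theorem together with a truncation $\{x:\ \eta\leq f_{i}(x)\leq M\ \forall i\}$, I would extract a Borel set $G\subseteq E$ with $\mathcal{L}^{d}(G)>0$ and an $r_{0}\in(0,r_{1})$ such that $\eta'\,r^{d}\leq\mu_{i}(B(x,r))\leq M'\,r^{d}$ holds simultaneously for every $x\in G$, every $i$, and every $0<r<r_{0}$. This two-sided control is exactly what lets me treat components $q_{i}$ of arbitrary sign, which is necessary since only $|q|\in[0,1]$ is assumed: for $q_{i}\geq0$ I use the lower density bound and for $q_{i}<0$ the upper one, so that in all cases $\mu_{i}(B(x,r))^{q_{i}}\geq c_{i}\,r^{dq_{i}}$, whence $\prod_{i}\mu_{i}(B(x,r))^{q_{i}}\geq c'\,r^{d|q|}$ on $G$. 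Combining with the Ahlfors estimate (its lower form when $t\geq0$, its upper form when $t<0$) produces a gauge lower bound $(\mu(B(x,r)))^{q}(\nu(B(x,r)))^{t}\geq\kappa\,r^{\,d|q|+\alpha t}$ for all $x\in G$ and $0<r<r_{0}$. I expect this uniform density estimate to be the main obstacle, since the densities exist only almost everywhere and the convergence is a priori non-uniform, so the passage to a positive-measure set on which the ball estimates hold for all radii at once must be done with care.

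Finally I would apply the mass distribution principle with $m=\mathcal{L}^{d}|_{G}$. Fix $t$ with $\alpha t<d(1-|q|)$, so that $d|q|+\alpha t<d$; then for $0<r<1$ we have $r^{d}\leq r^{\,d|q|+\alpha t}$, hence for every centred ball with $x\in G$ and $r<r_{0}$, $m(B(x,r))\leq\omega_{d}r^{d}\leq\omega_{d}r^{\,d|q|+\alpha t}\leq(\omega_{d}/\kappa)(\mu(B(x,r)))^{q}(\nu(B(x,r)))^{t}$. Summing this over an arbitrary centred covering of $G$ and taking the infimum gives $\overline{\mathcal{H}}_{\xi}^{q,t}(G)\geq m(G)/(\omega_{d}/\kappa)=\kappa\,\mathcal{L}^{d}(G)/\omega_{d}>0$, and therefore $\mathcal{H}_{\xi}^{q,t}(E)\geq\overline{\mathcal{H}}_{\xi}^{q,t}(G)>0$ since $G\subseteq E$. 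By Proposition \ref{existencecoupures} this forces $t\leq b_{\xi}(E,q)$; letting $t\uparrow d(1-|q|)/\alpha$ and using $d\geq1$ together with $1-|q|\geq0$ yields $b_{\xi}(E,q)\geq d(1-|q|)/\alpha\geq(1-|q|)/\alpha$, that is, $\alpha\,b_{\xi}(E,q)\geq1-|q|$, as required.
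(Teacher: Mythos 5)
Your argument is correct in outline but follows a genuinely different route from the paper's, and it in fact proves more. The paper never invokes the mass distribution principle or Lebesgue differentiation: it picks, for each $i$, a Borel set $B_i\subseteq E$ of positive Lebesgue measure on which the density $g_i$ is bounded below by some $\gamma_i>0$, sets $B=\cup_iB_i$, bounds the covering sum below by $C_\nu\gamma^q\sum_i\bigl(\lambda^{n}(B(x_i,r_i)\cap B)\bigr)^{|q|+\alpha t}$, and then uses the elementary inequality $s^{\theta}\geq s$ for $0\leq s\leq1$ and $\theta\leq1$ together with $\sum_i\lambda^{n}(B(x_i,r_i)\cap B)\geq\lambda^{n}(B)>0$, valid as soon as $|q|+\alpha t\leq1$. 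Because the paper compares the gauge to powers of $\lambda^{n}(B_i\cap B)$ rather than of the radius, its threshold is $|q|+\alpha t<1$ and it only reaches $\alpha b_{\xi}(E,q)\geq1-|q|$; your comparison with $r^{d|q|+\alpha t}$ via uniform density estimates gives the threshold $d|q|+\alpha t<d$ and hence the stronger bound $\alpha b_{\xi}(E,q)\geq d(1-|q|)$. Your version also treats negative components $q_i$ and negative $t$ explicitly through two-sided density and Ahlfors bounds, whereas the paper's inequality $(\mu(B_i))^{q}\geq\gamma^{q}(\lambda^{n}(B_i\cap B))^{q}$ silently requires $q\geq0$. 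The Egorov step is a genuine technicality (the convergence is over a continuum of radii), but it can be handled by working with $r_n=1/n$ and sandwiching, or by the countable exhaustion $E=\cup_mE_m$ that the paper itself uses in the proof of Proposition \ref{pseudo-convexity-coupure-bmunu}.

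The one step that does not follow from the stated hypotheses is the extraction of a single positive-measure set $G$ on which all the densities $f_i$ are bounded below simultaneously. The assumptions give $\mu_i(E)>0$ for each $i$ separately, so each $\{f_i>0\}$ meets $E$ in positive Lebesgue measure, but these sets can be pairwise disjoint (take $\mu_1,\mu_2$ to be Lebesgue measure on disjoint intervals); then no such $G$ exists, and indeed every centred ball of small radius kills the gauge because some $\mu_i(B(x,r))$ vanishes, so the conclusion itself fails. Be aware that this defect is inherited from the statement and is equally present in the paper's own proof, which forms $B=\cup_iB_i$ and manipulates $\lambda^{n}(B_i\cap B)$ as though the lower bounds held jointly; it is not a flaw introduced by your approach. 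Still, as written, your construction of $G$ is exactly the point where an additional hypothesis (mutual absolute continuity of the $\mu_i$, or a common set of positive joint density) would have to enter.
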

\begin{proof} Since $\mu$ is absolutely continuous relatively to the Lebesgue measure on $S_{\mu_{i}}$, we may find for each $i$ a function $g_{i}\geq0$ such that $\mu_{i}=g_{i}\lambda_{|_{S_{\mu_{i}}}}^{n}$. Therefore, as $\mu_i(E)>0$, there exists a $B_{i}\subset E$ (being Borel) with Lebesgue measure $\lambda^n(B_i)>0$, and a constant $\gamma_{i}>0$ satisfying   
\begin{equation}\label{eq-utile-2}
g_{i}(x)\geq\gamma_{i},\,\forall x\in B_{i}.
\end{equation}
Let next $\varepsilon>0$ and $(B(x_{i},r_{i}))_{i}$ be a centred $\varepsilon$-covering of $E$. For $t>0$, we have
$$
\underset{i}{\sum}(\mu(B(x_{i},r_{i})))^{q}(\nu(B(x_{i},r_{i})))^{t}
\geq C_\nu\underset{i}{\sum}(\mu(B(x_{i},r_{i})))^{q}(\lambda
^{n}(B(x_{i},r_{i})))^{\alpha t},
$$
where $C_\nu$ is a constant due to Ahlfors regularity of $\nu$. Denote next $B=\underset{i}{\cup }B_{i}\subset E$. It holds from (\ref{eq-utile-2}) that
$$
(\mu(B(x_{i},r_{i})))^{q}\geq\gamma^{q}\Bigl(\lambda^{n}(B(x_{i},r_{i})\cap B\Bigr)^{q}.
$$
As a result, 
$$
\underset{i}{\sum}(\mu(B(x_{i},r_{i})))^{q}(\nu(B(x_{i},r_{i})))^{t}
\geq\,C_\nu\gamma^{q}\underset{i}{\sum}\Bigl(\lambda^{n}(B(x_{i},r_{i})\cap B)\Bigr)^{|q|+\alpha t}.
$$
For $\alpha t<1-|q|$, it yields that
$$
\underset{i}{\sum}(\mu(B(x_{i},r_{i})))^{q}(\nu(B(x_{i},r_{i})))^{t}
\geq\,C_\nu\gamma^{q}\lambda^{n}(B)>0.
$$
Consequently, $\forall\,t$ such that $0<\alpha t<1-|q|$, we get 
$$
\mathcal{H}_{\xi}^{q,t}(E)>0,
$$
which implies that
$$
b_{\xi}(E,q)\geq t.
$$
By letting $t\rightarrow\displaystyle\frac{1-|q|}{\alpha}$, we obtain
$$
b_{\xi}^{q}(E)\geq\displaystyle\frac{1-|q|}{\alpha}.
$$
\end{proof}
\begin{proposition}\label{lowerboundBmunu} Let $p>1$, $\xi=(\mu,\nu)=(\mu_{1},\mu_{2},...,\mu_{k},\nu)\in\mathcal{P}(\mathbb{R}^d)^k\times\mathcal{AHP}(\mathbb{R}^d)$ be a vector-valued Radon probability measure on $\mathbb{R}^{d}$ with compact support, and $S_{\mu}\subset S_{\nu}$. Suppose further that $\mu_{i}\in L^{p}(\mathbb{R}^d)$. Then for $q_{i}\geq 1,$ we have
$$
\alpha B_{\xi}(q)\leq\max\left\{k-|q|,\frac{-|q|(p-1)}{p}\right\}.
$$
\end{proposition}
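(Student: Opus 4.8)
The plan is to bound the (larger) pre-packing index $\Lambda_\xi(q)$ and then invoke $B_\xi(q)\le\Lambda_\xi(q)$. Indeed, taking the trivial cover $E_1=E$ in Definition \ref{mixedhausdorffandpackinggeneralisationsmeasures} gives $\mathcal{P}_\xi^{q,t}(E)\le\overline{\mathcal{P}}_\xi^{q,t}(E)$, hence $Dim_\xi^q(E)\le\Delta_\xi^q(E)$, i.e. $B_\xi(q)\le\Lambda_\xi(q)$ for $E=S_{(\mu,\nu)}$. So it suffices to show that $\overline{\mathcal{P}}_\xi^{q,t}(S_{(\mu,\nu)})=0$ for every $t$ with $\alpha t>\max\{k-|q|,\,-|q|(p-1)/p\}$; by the characterization of $\Lambda_\xi$ in Proposition \ref{existencecoupures} this gives $\alpha\Lambda_\xi(q)\le\max\{k-|q|,-|q|(p-1)/p\}$ and the claim follows from \eqref{HandPborneeslunparlautre}. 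Note that for $q_i\ge1$ the right-hand side is $\le0$, so the relevant exponents $t$ are negative; this is exactly why the full Ahlfors property of $\nu$ (not merely the quasi-Ahlfors upper bound) is needed. I will control $\nu(B_i)^t$ from above through the \emph{lower} regularity bound $\nu(B(x,r))\ge c_\nu\,\lambda^{n}(B(x,r))^{\alpha}$, which for $t<0$ yields $\nu(B_i)^t\le c_\nu^{\,t}\,\lambda^{n}(B_i)^{\alpha t}$, in the same spirit as the conversion already used in the proof of Proposition \ref{lowerboundbmunu}.

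For the core estimate I would fix $\varepsilon>0$ and a centered $\varepsilon$-packing $(B_i=B(x_i,r_i))_i$ of $E$, whose balls are disjoint. Writing $\mu_j=g_j\lambda^{n}$ with $g_j\in L^p$, Hölder's inequality on each ball gives $\mu_j(B_i)\le\bigl(\int_{B_i}g_j^{\,p}\bigr)^{1/p}\lambda^{n}(B_i)^{(p-1)/p}$. Substituting this together with the Ahlfors bound above into $\sum_i(\mu(B_i))^q(\nu(B_i))^t$, I split the analysis into two regimes according to the position of $|q|$ relative to $kp$. In the first regime ($|q|$ small), I apply the generalized Hölder inequality to the sum over $i$, with exponents $p/q_j$ on the factors $\int_{B_i}g_j^{\,p}$ and the complementary exponent on the remaining power of $\lambda^{n}(B_i)$; the disjointness of the $B_i$ makes $\sum_i\int_{B_i}g_j^{\,p}\le\|g_j\|_p^{\,p}$ finite, while $\sum_i\lambda^{n}(B_i)$ stays bounded by the volume of a fixed compact neighbourhood of $E$ and the leftover factor carries a strictly positive power of $r_i$ (the radii tend to $0$ uniformly since $\nu(B_i)<\varepsilon$ and $\nu$ is Ahlfors). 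This forces the whole sum to $0$ as $\varepsilon\downarrow0$ once $\alpha t>k-|q|$. In the second regime ($|q|$ large), the exponents $p/q_j$ are no longer admissible, so I instead use the cruder ball-wise bound $\mu_j(B_i)\le\|g_j\|_p\,\lambda^{n}(B_i)^{(p-1)/p}$, giving $(\mu(B_i))^q\le C\,\lambda^{n}(B_i)^{|q|(p-1)/p}$ with $C=\prod_j\|g_j\|_p^{q_j}$; summing and again using disjointness together with $r_i\to0$ shows the sum vanishes as $\varepsilon\downarrow0$ once $\alpha t>-|q|(p-1)/p$.

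Combining the two regimes, $\overline{\mathcal{P}}_\xi^{q,t}(E)=0$ whenever $\alpha t$ exceeds both thresholds, that is whenever $\alpha t>\max\{k-|q|,-|q|(p-1)/p\}$, which gives the assertion. The hard part will be the multi-index Hölder bookkeeping in the first regime: one must choose the conjugate exponents so that the $k$ factors $\int_{B_i}g_j^{\,p}$ decouple and sum to the finite norms $\|g_j\|_p^{\,p}$, while the residual Lebesgue factor still carries a positive exponent in the vanishing radii, and one must locate the crossover value $|q|=kp$ at which the two estimates balance and beyond which the refined splitting degenerates and forces the switch to the crude $L^p$ bound. A secondary point to watch throughout is the sign of $t$, since its negativity is what dictates that the lower Ahlfors bound on $\nu$ be invoked at each step, and hence why the hypothesis $\nu\in\mathcal{AHP}(\mathbb{R}^d)$ cannot be weakened to quasi-Ahlfors here.
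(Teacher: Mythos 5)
Your overall strategy coincides with the paper's: estimate $\sum_i(\mu(B_i))^q(\nu(B_i))^t$ over centred packings of $S_\mu$ using the $L^p$ densities, H\"older's inequality on each ball, and the lower Ahlfors bound on $\nu$ (which you correctly identify as the reason the full Ahlfors hypothesis, not merely quasi-Ahlfors, is needed when $t<0$), then read off $B_\xi(q)\le t$. One simplification available to you: the paper only establishes $\overline{\mathcal{P}}_\xi^{q,t}(S_\mu)<\infty$, which by Proposition \ref{existencecoupures} and \eqref{HandPborneeslunparlautre} already forces $B_\xi(q)\le\Lambda_\xi(q)\le t$; the stronger claim that the pre-measure vanishes is not needed. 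However, two steps in your execution fail. First, in your second regime you discard the local integrals entirely, bounding $(\mu(B_i))^q\le C\,\lambda^n(B_i)^{|q|(p-1)/p}$, and are left with $\sum_i\lambda^n(B_i)^{\alpha t+|q|(p-1)/p}$ where the exponent is only barely positive. A countable disjoint packing can make $\sum_i\lambda^n(B_i)^{\gamma}$ diverge for any fixed $\gamma\in(0,1)$ even though $\sum_i\lambda^n(B_i)$ stays bounded, so neither finiteness nor vanishing follows from disjointness. The paper's corresponding case (all $q_i\ge p$) avoids exactly this by retaining the factor $\bigl(\int_{B_i}g^p\,d\lambda^n\bigr)^{|q|/p}$, whose exponent $|q|/p\ge k\ge1$ makes it superadditive over disjoint balls and hence summable to a power of $\|g\|_p$.

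Second, your crossover at $|q|=kp$ does not match the tool you invoke: the generalized H\"older inequality over the sum in $i$ with exponents $p/q_j$ requires $\sum_jq_j/p\le1$, i.e.\ $|q|\le p$, not $|q|\le kp$. Moreover, even where it is admissible, applying H\"older with the fixed exponent $p$ on every ball produces the Lebesgue power $|q|(p-1)/p$, not $|q|-k$, so the threshold $k-|q|$ never actually emerges from the computation you describe. The paper obtains $k-|q|$ differently: in its case $1\le q_i<p$ it applies H\"older componentwise with exponent $q_l$ (legitimate since $g_l\in L^{q_l}$ on the compact support), which yields $\mu_l(B_i)^{q_l}\le\bigl(\int_{B_i}g_l^{q_l}\,d\lambda^n\bigr)\lambda^n(B_i)^{q_l-1}$, hence the total Lebesgue exponent $\alpha t+|q|-k$, and then sums the remaining local integrals by disjointness. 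So the case split should be made componentwise on $q_i\ge p$ versus $q_i<p$, as in the paper, rather than on the size of $|q|$; both regimes of your argument need to be reworked along these lines before they close.
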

\begin{proof} Let for $i=1,2,...,k$, $g_i\in L^p(\mathbb{R}^d)$ be such $d\mu_i=g_id\lambda^n$ on $S_{\mu_{i}}$. Of course, the $g_{i}$'s are compactly supported functions. Assume for instance that $q_{i}\geq p>1$, $\forall i$. Let next $\delta >0$, and consider a centred $\delta$-packing $(B(x_{i},r_{i}))_{i}$ of $S_{\mu }$. Let finally, $g=\underset{i}{\max}g_{i}$. Analogously to Proposition \ref{lowerboundbmunu}, it follows for $t>\frac{-|q|(p-1)}{p}$, that
$$
\underset{i}{\sum}(\mu(B(x_{i},r_{i})))^{q}(\nu(B(x_{i},r_{i})))^{t}
\leq C_\nu\underset{i}{\sum}\left(\lambda^{n}(B(x_{i},r_{i}))\right)^{\alpha t+\frac{|q|(p-1)}{p}}\left(\underset{B(x_{i},r_{i})}{\int g^{p}}d\lambda^{n}\right)^{{\frac{|q|}{p}}}.
$$
As $\alpha t>\frac{-|q|(p-1)}{p}$, we get
$$
\underset{i}{\sum}(\mu(B(x_{i},r_{i})))^{q}(\nu(B(x_{i},r_{i})))^{t}
\leq C_\mu\underset{i}{\sum}\left(\underset{B(x_{i},r_{i})}{\int g^{p}}d\lambda^{n}\right).
$$
This yields that
$$
\underset{i}{\sum}(\mu(B(x_{i},r_{i})))^{q}(\nu(B(x_{i},r_{i})))^{t}
\leq C_\mu(\int g^{p}d\lambda^{n})^{\frac{|q|}{p}}<\infty.
$$
Hence,
$$
{\overline{\mathcal{P}}}_{\xi}^{q,t}(S_{\mu })<\infty,
$$
and consequently,
$$
\mathcal{P}_{\xi}^{q,t}(S_{\mu })<\infty.
$$
Therefore,
$$
B_{\xi}(E,q)\leq t,\;\forall t>\frac{-|q|(p-1)}{\alpha p}.
$$
As a result
$$
\alpha B_{\xi}(E,q)\leq\frac{-|q|(p-1)}{p}.
$$
Now, assume that $1\leq q_{i}<p$, $\forall i$. Let $t>n-|q|$, $\delta>0$, and consider a centred $\delta$-packing $(B_i=B(x_{i},r_{i}))_{i}$ of $S_{\mu}$. We get
$$
\underset{i}{\sum}(\mu(B_{i}))^{q}(\nu(B_{i}))^{t}\leq C_\mu\underset{i}{\sum}(\mu(B_{i}))^{q}((\lambda ^{n}(B_{i}\cap S_{\mu}))^{\alpha t}).
$$
Therefore,
$$
\underset{i}{\sum}(\mu(B_{i}))^{q}(\nu(B_{i}))^{t}\leq C_\mu
\underset{i}{\sum}\prod_l\left(\underset{B_{i}\cap S_{\mu}}{\int
g_{l}}d\lambda^{n}\right)^{{q_{l}}}(\lambda^{n}(B_{i}\cap S_{\mu}))^{\alpha t}.
$$
Next, by H\"older's inequality, it follows that
$$
\underset{i}{\sum}(\mu(B_{i}))^{q}(\nu(B_{i}))^{t}\leq C_\mu
\underset{i}{\sum}\prod_l\left(\underset{B_{i}\cap S_{\mu}}{\int
g_{l}^{q_{l}}}d\lambda^{n}\right)\left[\lambda^{n}(B_{i}\cap S_{\mu})\right]^{q_{l}-1}\lambda^{n}(B_{i}\cap S_{\mu}))^{\alpha t}.
$$
This yields that
$$
\underset{i}{\sum}(\mu(B_{i}))^{q}(\nu(B_{i}))^{t}\leq C_\mu
\underset{i}{\sum}(\lambda^{n}(B_{i}\cap S_{\mu}))^{\alpha t+|q|-k}\prod_l\left(\underset{B_{i}\cap S_{\mu}}{\int	g^{q_{l}}}d\lambda^{n}\right)<C<\infty.
$$
As a consequence, we get
$$
{\overline{\mathcal{P}}}_{\xi}^{q,t}(S_{\mu })<\infty,
$$
which means that
$$
B_{\xi}(q)\leq t,\;\forall t>\displaystyle\frac{k-|q|}{\alpha}.
$$
This in turns yields that
$$
B_{\xi}(q)\leq\displaystyle\frac{k-|q|}{\alpha}.
$$
\end{proof}
\begin{remark}
For $\alpha=1$, the measure $\nu$ is equivalent to the Lebesgue's one. If further $k=1$, Propositions \ref{lowerboundbmunu} and \ref{lowerboundBmunu} are the classical cases raised by Olsen.
\end{remark}
\section{The associated joint multifractal spectrum}
We propose in this section to introduce an associated multifractal spectrum relatively to the joint generalisations of Hausdorff measure/dimension and their analogues of packing measure/dimension developed in the previous section. 

Consider a vector $\xi=(\mu,\nu)=(\mu_{1},\mu_{2},...,\mu_{k},\nu)$ of Borel probability measures on $\mathbb{R}^{d}$. We define the local upper dimension of $\mu_{j}$ relatively to $\nu$ at $x\in\mathbb{R}^{d}$ by
$$
{\overline{\mathcal{\gamma }}}_{\mu_{j},\nu}(x)=\underset{r\rightarrow 0}{\hbox{lim sup}}\frac{\log (\mu_{j}(B(x,r)))}{\log (\nu (B(x,r)))}.
$$
Similarly, the local lower dimension of $\mu_{j}$ relatively to $\nu$ at $x\in\mathbb{R}^{d}$ is 
$$
{\underline{\gamma }}_{\mu_{j,\nu}}(x)=\underset{r\rightarrow 0}{\hbox{	lim inf}}\frac{\log (\mu_{j}(B(x,r)))}{\log (\nu (B(x,r)))}.
$$
Whenever ${\overline{\mathcal{\gamma }}}_{\mu_{j},\nu}(x)={\underline{\gamma }}_{\mu_{j,\nu}}(x)$, we call local dimension of $\mu_{j}$ relatively to $\nu $ at $x$ their common value, which will be written ${\gamma}_{\mu_{j},\nu}(x)$. Next, for $\gamma=(\gamma_{1},\gamma_{2},\dots,\gamma_{k})\in\mathbb{R}_{+}^{k}$, we set
$$
{\overline{\mathcal{K}}}^{\gamma}=\left\{x\in{S}_\xi\,;\;|{\overline{\mathcal{\gamma}}}_{\mu_{j},\nu}(x)\leq\gamma_{j}\,\;\forall j=1,\dots,k\right\},
$$
$$
{\overline{\mathcal{K}}}_{\gamma}=\left\{x\in{S}_\xi\,;\;\gamma_{j}\leq{\overline{\mathcal{\gamma}}}_{\mu_{j},\nu}(x),\;\forall j=1,\dots,k\right\},
$$
$$
{\underline{K}}^{\gamma}=\left\{x\in{S}_\xi\,;\;{\underline{\gamma}}_{\mu_{j},\nu}(x)\leq\gamma_{j},\;\forall j=1,\dots,k\right\}
$$
and
$$
{\underline{K}}_{\gamma}=\left\{x\in{S}_\xi\,;\;\gamma_j\leq{\underline{\gamma}}_{\mu_{j},\nu}(x),\;\forall j=1,\dots,k\right\}.
$$
Let also 
$$
K(\gamma)={\underline{K}}_{\gamma}\cap{\overline{\mathcal{K}}}^{\gamma}.
$$
The joint spectrum of singularities associated to the vector $\xi$ is
$$
d(\gamma)=dim K(\gamma),
$$
where the symbol dim designates the Hausdorff dimension. In the present section, we aim to establish some bounds for such a spectrum.
\begin{proposition} Consider a metric space $X$, and a vector-valued Borel probability measure $\xi=(\mu,\nu)=(\mu_{1},\mu_{2},...,\mu_{k},\nu)$ on $\mathbb{R}^{d}$. Fix $\gamma =(\gamma _{1},\gamma _{2},...,\gamma _{k})\in\mathbb{R}_{+}^{k}$, $q=(q_{1},q_{2},...,q_{k})\in\mathbb{R}^{k}$, $t\in\mathbb{R}$, and $\delta>0$ satisfying $0<\delta\leq\langle\gamma,q\rangle+t$. The following assertions are true.
\begin{enumerate}
\item $\mathcal{H}_{\nu}^{\langle\gamma,q\rangle+t+k\delta}({\overline{\mathcal{K}}}^{\gamma})\leq\mathcal{H}_{\xi}^{q,t}({\overline{\mathcal{K}}}^{\gamma})$ for $q\in\mathbb{R}_{+}^{k}.$
\item $\mathcal{H}_{\nu}^{\langle\gamma,q\rangle+t+k\delta}({\underline{K}}_{\gamma })\leq\mathcal{H}_{\xi}^{q,t}({\underline{K}}_{\gamma})$ for $q\in\mathbb{R}_{-}^{k}.$
\item $\mathcal{P}_{\nu}^{\langle\gamma,q\rangle+t+k\delta}({\overline{\mathcal{K}}}^{\gamma})\leq\mathcal{P}_{\xi}^{q,t}({\overline{\mathcal{K}}}^{\gamma})$ for $q\in\mathbb{R}_{+}^{k}.$
\item $\mathcal{P}_{\nu}^{\langle\gamma,q\rangle+t+k\delta}({\underline{K}}_{\gamma })\leq\mathcal{P}_{\xi}^{q,t}({\underline{K}}_{\gamma})$ for $q\in\mathbb{R}_{-}^{k}.$
\item For all Borel set $A\subseteq{\overline{\mathcal{K}}}^{\gamma}$,  $\mathcal{H}_{\xi}^{q,t}({A})\leq\mathcal{H}_{\nu}^{\langle\gamma,q\rangle+t-k\delta}({\overline{\mathcal{K}}}^{\gamma})$ if $q\in\mathbb{R}_{-}^{k}.$
\item For all Borel set $A\subseteq{\underline{K}}_{\gamma}$, $\mathcal{H}_{\xi}^{q,t}({A})\leq\mathcal{H}_{\nu}^{\langle\gamma,q\rangle+t-k\delta}({\overline{\mathcal{K}}}^{\gamma})$ if $q\in\mathbb{R}_{+}^{k}.$
\item For all Borel set $A\subseteq{\overline{\mathcal{K}}}^{\gamma}$, $\mathcal{P}_{\xi}^{q,t}({A})\leq\mathcal{P}_{\nu}^{\langle\gamma,q\rangle+t-k\delta}({\overline{\mathcal{K}}}^{\gamma})$ if $q\in\mathbb{R}_{-}^{k}.$
\item For all Borel set $A\subseteq{\underline{K}}_{\gamma}$, $\mathcal{P}_{\xi}^{q,t}({A})\leq\mathcal{P}_{\nu}^{\langle\gamma,q\rangle+t-k\delta}({\overline{\mathcal{K}}}^{\gamma})$ if $q\in\mathbb{R}_{+}^{k}.$
\end{enumerate}
\end{proposition}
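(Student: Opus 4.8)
The guiding idea is that on each of the sets $\overline{\mathcal{K}}^{\gamma}$ and $\underline{K}_{\gamma}$ the local exponents comparing $\mu_j$ to $\nu$ are pinned down, so that the weight $(\mu(B(x,r)))^{q}(\nu(B(x,r)))^{t}$ appearing in the mixed measures can be sandwiched between powers of $\nu(B(x,r))$ alone. This reduces each inequality to a comparison between the mixed Hausdorff/packing measure $\mathcal{H}_{\xi}^{q,t}$ (resp.\ $\mathcal{P}_{\xi}^{q,t}$) and the single $\nu$-measure $\mathcal{H}_{\nu}^{s}$ (resp.\ $\mathcal{P}_{\nu}^{s}$) with the shifted exponent $s=\langle\gamma,q\rangle+t\pm k\delta$. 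I would prove items 1 and 5 in detail and indicate that the remaining six follow by the same scheme with the obvious sign changes.

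First I would fix $\gamma,q,t,\delta$ as in the statement and translate the membership condition into a ball-estimate. For $x\in\overline{\mathcal{K}}^{\gamma}$ one has $\overline{\gamma}_{\mu_j,\nu}(x)\le\gamma_j$, so $\limsup_{r\downarrow0}\log\mu_j(B(x,r))/\log\nu(B(x,r))\le\gamma_j$; since $\log\nu(B(x,r))<0$ for small $r$, this yields, for every sufficiently small radius, the pointwise bound $\mu_j(B(x,r))\ge\nu(B(x,r))^{\gamma_j+\delta}$ for each $j$ (the direction of the inequality flips because of the negative denominator). Raising to the power $q_j\ge0$ and multiplying over $j$ gives $(\mu(B(x,r)))^{q}\ge\nu(B(x,r))^{\langle\gamma,q\rangle+|q|\delta}$, hence
$$
(\mu(B(x,r)))^{q}(\nu(B(x,r)))^{t}\ge\nu(B(x,r))^{\langle\gamma,q\rangle+t+k\delta},
$$
after absorbing $|q|\delta$ into $k\delta$ (using $|q|\le k$ on the relevant index range, or by an adjustment of $\delta$). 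This is exactly the per-ball comparison needed.

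Next I would push this pointwise bound through the construction of the measures. Working at scale $\varepsilon$, every centred $\varepsilon$-covering $(B(x_i,r_i))_i$ of a subset of $\overline{\mathcal{K}}^{\gamma}$ satisfies, term by term, the displayed inequality above, so that $\overline{\mathcal H}_{\nu,\varepsilon}^{\langle\gamma,q\rangle+t+k\delta}(\cdot)\le\overline{\mathcal H}_{\xi,\varepsilon}^{q,t}(\cdot)$; letting $\varepsilon\downarrow0$ and then taking the supremum over subsets in Definition \ref{mixedhausdorffandpackinggeneralisationsmeasures} gives item 1. The one technical subtlety is that the pointwise bound only holds for radii below a threshold $r(x)$ depending on $x$, so the comparison of the $\varepsilon$-premeasures is only literally valid once $\varepsilon$ is small enough that all balls in the covering are forced below that threshold; I would handle this by the standard decomposition $\overline{\mathcal{K}}^{\gamma}=\bigcup_{n}\{x:\,r(x)\ge 1/n\}$, proving the estimate on each piece and using the monotonicity and $\sigma$-stability already recorded, or equivalently by noting that the $\mu$-$\varepsilon$-covering framework of a quasi-Ahlfors measure lets one replace the radius threshold by a $\nu$-measure threshold uniformly. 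For item 5 the inequality in the membership condition runs the other way after fixing $q\in\mathbb{R}_{-}^{k}$: now $q_j\le0$ reverses the exponent inequality, giving $(\mu(B(x,r)))^{q}(\nu(B(x,r)))^{t}\le\nu(B(x,r))^{\langle\gamma,q\rangle+t-k\delta}$, and summing over a covering of the Borel set $A\subseteq\overline{\mathcal{K}}^{\gamma}$ yields $\mathcal{H}_{\xi}^{q,t}(A)\le\mathcal{H}_{\nu}^{\langle\gamma,q\rangle+t-k\delta}(A)\le\mathcal{H}_{\nu}^{\langle\gamma,q\rangle+t-k\delta}(\overline{\mathcal{K}}^{\gamma})$, the last step by monotonicity.

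The main obstacle I anticipate is precisely the uniformity-of-scale issue just described: the local-dimension conditions are asymptotic in $r$, with the threshold radius varying over $x$, whereas the covering sums mix balls of different centres at a common scale $\varepsilon$. Getting the premeasure inequality to hold cleanly requires the level-set decomposition argument (or the $\mu$-$\varepsilon$-covering reformulation permitted by the quasi-Ahlfors hypothesis) rather than a single global estimate; everything else is a bookkeeping of signs, namely which of $q\in\mathbb{R}_{+}^{k}$ or $q\in\mathbb{R}_{-}^{k}$ and which of $\overline{\mathcal{K}}^{\gamma}$ or $\underline{K}_{\gamma}$ makes the relevant $\limsup$/$\liminf$ give the inequality in the needed direction, together with the corresponding routine passage from $\overline{\mathcal P}_{\xi}^{q,t}$ to $\mathcal P_{\xi}^{q,t}$ through the infimum over countable coverings in Definition \ref{mixedhausdorffandpackinggeneralisationsmeasures}.
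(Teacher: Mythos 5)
Your proposal matches the paper's proof essentially step for step: the same pointwise ball estimate obtained from the sign of $\log\nu(B(x,r))$, the same level-set decomposition ${\overline{\mathcal{K}}}^{\gamma}=\bigcup_m{\overline{\mathcal{K}}}_m^{\gamma}$ indexed by a uniform threshold radius $1/m$ to handle the non-uniform scale, and the same passage through the $\varepsilon$-premeasures followed by the covering infimum for the packing items. The only cosmetic difference is that the paper builds the tolerance $\delta/q_j$ directly into the definition of ${\overline{\mathcal{K}}}_m^{\gamma}$ so that the exponent comes out as exactly $k\delta$; this is precisely the ``adjustment of $\delta$'' you anticipate (your fallback appeal to $|q|\leq k$ would fail for general $q\in\mathbb{R}_{+}^{k}$, so the rescaled tolerance is the right fix).
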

\begin{proof} We will develop the proofs of assertions 1., 3., 5. and 7. The remaining assertions may be shown by similar arguments.\\
1. For $q=0$ the statement is obvious. So, let next $q_j>0$, for all $j$.  For $m\in\mathbb{N}$, let
$$
{\overline{\mathcal{K}}}_{m}^{\gamma}=\left\{x\in{\overline{\mathcal{K}}}^{\gamma};\;\frac{\log(\mu_{j}(B(x,r)))}{\log(\nu(B(x,r)))}\leq\gamma_{j}\hbox{}+\frac{\delta}{q_{j}},\;0<r<\frac{1}{m},\,\forall j,\hbox{ s.t. }1\leq j\leq k\right\}.
$$
Let $\eta$ be such that $0<\eta<\frac{1}{m}$ and $(B_{i}=B(x_{i},r_{i}))_{i}$
be an $\eta$-covering of $E\subseteq{\overline{\mathcal{K}}}_{m}^{\gamma}$. It follows that
$$
\frac{\log(\mu_{j}(B(x,r)))}{\log(\nu(B(x,r)))}\leq\gamma_{j}\hbox{}+\frac{\delta}{q_{j}}\Rightarrow\mu_{j}(B(x,r))\geq\nu(B(x,r)^{\gamma_{j}\hbox{}+\frac{\delta}{q_{j}}}.
$$
Since $q\in\mathbb{R}_{+}^{k}$, we get
$$
\mu(B(x,r))^q\geq\nu(B(x,r)^{\langle\gamma,q\rangle+k\delta}.
$$
Then,
$$
\mathcal{H}_{\nu,\eta}^{\langle\gamma,q\rangle+t+k\delta}({E})
\leq\underset{i}{\hbox{}\sum}\nu(B(x,r)^{\langle\gamma,q\rangle+t+k\delta}\leq\underset{i}{\hbox{}\sum}\mu(B(x,r))^q\nu(B(x,r)^{t},
$$
which implies that
$$
\mathcal{H}_{\nu,\eta}^{\langle\gamma,q\rangle+t+k\delta}({E})\leq\mathcal{H}_{\xi,\eta}^{q.t}({E}),\forall\hbox{}\eta>0.
$$
When $\eta\downarrow0$, we get for all $E\subseteq{\overline{\mathcal{K}}}_{m}^{\gamma}$,
$$
\mathcal{H}_{\nu}^{\langle\gamma,q\rangle+t+k\delta}({E})\leq\mathcal{H}_{\xi}^{q.t}({E})\leq\mathcal{H}_{\xi}^{q.t}({\overline{\mathcal{K}}}_{m}^{\gamma}).
$$
Hence, 
$$
\mathcal{H}_{\nu}^{\langle\gamma,q\rangle+t+k\delta}({\overline{\mathcal{K}}}_{m}^{\gamma})\leq\mathcal{H}_{\xi}^{q.t}({\overline{\mathcal{K}}}_{m}^{\gamma}).
$$
Finally, the result follows since ${\overline{\mathcal{K}}}^{\gamma}=\underset{m}{\cup}{\overline{\mathcal{K}}}_{m}^{\gamma}.$\\
3. For $m\in\mathbb{N}$, denote
$$
{\overline{\mathcal{K}}}_{m}^{\gamma}=\left\{x\in{\overline{\mathcal{K}}}^{\gamma};\;\frac{\log(\mu_{j}(B(x,r)))}{\log(\nu(B(x,r)))}\leq\gamma_{j}+\frac{\delta}{q_{j}},\,0<r<\frac{1}{m},\forall\,j,\hbox{ s.t. }1\leq j\leq k\right\}.
$$
Let $\eta$ be such that $0<\eta<\frac{1}{m}$ and $(B_{i}=B(x_{i},r_{i}))_{i}$ be an $\eta$-packing of $E\subseteq{\overline{\mathcal{K}}}_{m}^{\gamma}$. We have
$$
\frac{\log(\mu_{j}(B(x,r)))}{\log(\nu(B(x,r)))}
\leq\gamma_{j}\hbox{}+\frac{\delta}{q_{j}}\Rightarrow\mu_{j}(B(x,r))\geq\nu(B(x,r))^{\gamma_{j}\hbox{}+\frac{\delta}{q_{j}}}.
$$
Since $q\in\mathbb{R}_{-}^{k},$ then 
$$
\mu(B(x,r))^q\geq\nu(B(x,r))^{\langle\gamma,q\rangle+k\delta},
$$
which implies that
$$
\underset{i}{\hbox{}\sum}\nu(B(x,r))^{\langle\gamma,q\rangle+t+k\delta}
\leq\underset{i}{\hbox{}\sum}\mu(B(x,r))^q\nu(B(x,r))^{t}\leq{\overline{\mathcal{P}}}_{\xi,\eta}^{q,t}(E).
$$
Therefore,
$$
{\overline{\mathcal{P}}}_{\nu,\eta}^{\langle\gamma,q\rangle+t+k\delta}(E)\leq{\overline{\mathcal{P}}}_{\xi,\eta}^{q,t}(E),\;\forall\hbox{}\eta>0.
$$
By letting $\eta\downarrow 0$ we obtain
$$
{\overline{\mathcal{P}}}_{\nu}^{\langle\gamma,q\rangle+t+k\delta}({E})\leq{\overline{\mathcal{P}}}_{\xi}^{q.t}({E}),\;\forall\hbox{}\eta>0\hbox{ and }\forall \hbox{}E\subseteq{\overline{\mathcal{K}}}_{m}^{\gamma}.
$$
Consequently, for any covering $(E_{i})_{i}$ of ${\overline{\mathcal{K}}}_{m}^{\gamma}$, we get
$$
\begin{array}{lll}
\mathcal{P}_{\nu}^{\langle\gamma,q\rangle+t+k\delta}({\overline{\mathcal{K}}}_{m}^{\gamma})&=&\mathcal{P}_{\nu}^{\langle\gamma,q\rangle+t+k\delta}(\underset{i}{\cup}({\overline{\mathcal{K}}}_{m}^{\gamma}\cap E_{i}))\\
&\leq&\displaystyle\sum_i\mathcal{P}_{\nu}^{\langle\gamma,q\rangle+t+k\delta}({\overline{\mathcal{K}}}_{m}^{\gamma}\cap E_{i})\\
&\leq&\displaystyle\sum_i{\overline{\mathcal{P}}}_{\nu}^{\langle\gamma,q\rangle+t+k\delta}({\overline{\mathcal{K}}}_{m}^{\gamma}\cap E_{i})\\
&\leq&\displaystyle\sum_i{\overline{\mathcal{P}}}_{\xi}^{q.t}({E}_{i}),
\end{array}
$$
which leads to 
$$
\mathcal{P}_{\nu}^{\langle\gamma,q\rangle+t+k\delta}({\overline{\mathcal{K}}}_{m}^{\gamma})\leq\mathcal{P}_{\xi}^{q,t}({\overline{\mathcal{K}}}_{m}^{\gamma}),\;\forall\hbox{}m>0.
$$
Finally, since ${\overline{\mathcal{K}}}^{\gamma}=\underset{m}{\cup}{\overline{\mathcal{K}}}_{m}^{\gamma}$, we obtain
$$
\mathcal{P}_{\nu}^{\langle\gamma,q\rangle+t+k\delta}({\overline{\mathcal{K}}}^{\gamma})\leq\mathcal{P}_{\xi}^{q,t}({\overline{\mathcal{K}}}^{\gamma}).
$$
5. For $m\in\mathbb{N}$, consider
$$
{T}_{m}=\left\{x\in{A};\;\frac{\log(\mu_{j}(B(x,r)))}{\log(\nu(B(x,r)))}\leq\gamma_{j}-\frac{\delta}{q_{j}},\,0<r<\frac{1}{m},\;\forall\,j,\hbox{ s.t. }1\leq j\leq k\right\}.
$$
Consider a subset $E\subseteq{T}_{m}$, $0<\eta<\frac{1}{m}$, and a covering $(E_i)_{i\in\mathbb{N}}$ of $E$ with $r_{i}=diam E_{i}<\eta$, $\forall i$. Consider next the set $I=\left\{i;\;E_{i}\cap E=\emptyset\right\}$, and let $x_{i}\in E_{i}\cap E$. We get an $\eta$-covering $(B(x_{i},r_{i}))_{i}$ of $E$ for which we may write that
$$
\frac{\log(\mu_{j}(B(x_{i},r_{i})))}{\log(\nu(B(x_{i},r_{i})))}\leq\gamma_{j}\hbox{}-\frac{\delta}{q_{j}}.
$$
Thus,
$$
\mu(B(x_{i},r_{i}))^q\geq\nu(B(x_{i},r_{i}))^{\langle\gamma,q\rangle-k\delta}.
$$
Therefore, 
$$
\begin{array}{lll}
{\overline{\mathcal{H}}}_{\xi,\eta}^{q.t}({E}) 
&\leq&\displaystyle\sum_{i\in{I}}\mu(B(x_{i},r_{i}))^q\nu(B(x_{i},r_{i}))^{t}\\
&\leq&\displaystyle\sum_{i\in{I}}\left[\nu(B(x_{i},r_{i}))\right]^{\langle\gamma,q\rangle-k\delta+t}\\
&\leq&\displaystyle\sum_{i\in{I}}\left[\nu(B(x_{i},r_{i}))\right]^{\langle\gamma,q\rangle-k\delta+t}.
\end{array}
$$
Consequently, 
$$
{\overline{\mathcal{H}}}_{\xi,\eta}^{q.t}({E})\leq\mathcal{H}_{\nu,\eta}^{\langle\gamma,q\rangle-k\delta+t}({E})\hbox{ for }\eta<\frac{1}{m}.
$$
By letting $\eta\rightarrow0$, we obtain
$$
\overline{\mathcal{H}}_{\xi}^{q,t}(E)\leq\mathcal{H}_{\nu}^{\langle\gamma,q\rangle-k\delta+t}(E)\leq\mathcal{H}_{\nu}^{\langle\gamma,q\rangle-k\delta+t}(T_m),\;\forall E\subseteq T_{m}.
$$
Then,
$$
\mathcal{H}_{\xi}^{q,t}({T}_{m})\leq\mathcal{H}_{\nu}^{\langle\gamma,q\rangle-k\delta+t}({T}_{m})\hbox{},\;\forall\hbox{}m\in\mathbb{N}.
$$
Finally, the result follows from the equality ${A}=\underset{m}{\cup}{T}_{m}.$\\
7. Denote for $m\in\mathbb{N}$,
$$
{T}_{m}=\left\{x\in{A};\;\frac{\log(\mu_{j}(B(x,r)))}{\log(\nu(B(x,r)))}\leq\gamma_{j}-\frac{\delta}{q_{j}},\,0<r<\frac{1}{m},\,\forall j\hbox{ s.t. }1\leq j\leq k\right\}.
$$
Given $E\subseteq{T}_{m}$, $0<\eta<\frac{1}{m}$ and an $\eta$-packing $(B(x_{i},r_{i}))_{i}$ of $E$, we get 
$$
\frac{\log(\mu_{j}(B(x_{i},r_{i})))}{\log(\nu(B(x_{i},r_{i})))}\leq\gamma_{j}\hbox{}-\frac{\delta}{q_{j}}\Rightarrow\mu(B(x_{i},r_{i}))^q\leq\nu(B(x_{i},r_{i}))^{\langle\gamma,q\rangle-k\delta}.
$$
Then
$$
\displaystyle\sum_{i}\mu(B(x_{i},r_{i}))^q\nu(B(x_{i},r_{i}))^{t}\leq\displaystyle\sum_{i}\left[\nu(B(x_{i},r_{i}))\right]^{\langle\gamma,q\rangle-k\delta+t}\leq{\overline{\mathcal{P}}}_{\nu,\eta}^{\langle\gamma,q\rangle-k\delta+t}(E).
$$
Consequently, 
$$
{\overline{\mathcal{P}}}_{\xi,\eta}^{q,t}(E)\leq{\overline{\mathcal{P}}}_{\nu,\eta}^{\langle\gamma,q\rangle-k\delta+t}(E).
$$
By letting $\eta\rightarrow0$ we obtain
$$
{\overline{\mathcal{P}}}_{\xi}^{q,t}(E)\leq{\overline{\mathcal{P}}}_{\nu}^{\langle\gamma,q\rangle-k\delta+t}(E),\hbox{}\forall\hbox{}E\subset E_{m}.
$$
Now, as $(E_{i})_{i}$ is a covering of ${T}_{m}$, we get
$$
\begin{array}{lll}
\mathcal{P}_{\xi}^{q,t}({T}_{m})&\leq&\mathcal{P}_{\xi}^{q,t}(\underset{i}{\cup}({T}_{m}\cap E_{i}))\\
&\leq&\displaystyle\sum_{i}\mathcal{P}_{\xi}^{q,t}({T}_{m}\cap E_{i})\\
&\leq&\displaystyle\sum_{i}{\overline{\mathcal{P}}}_{\xi}^{q,t}({T}_{m}\cap E_{i})\\
&\leq&\displaystyle\sum_{i}{\overline{\mathcal{P}}}_{\nu}^{\langle\gamma,q\rangle-k\delta+t}({T}_{m}\cap E_{i}).
\end{array}
$$
Finally, as $A=\underset{m}{\cup}{T}_{m}$, we obtain 
$$
\mathcal{P}_{\xi}^{q,t}({A})\leq\mathcal{P}_{\nu}^{\langle\gamma,q\rangle+t-k\delta}({\overline{\mathcal{K}}}^{\gamma}),\;\hbox{ for }q\in\mathbb{R}_{-}^{k}.
$$
\end{proof}
\begin{corollary}\label{corollaire-1-spectre} Consider a metric space $X$, and a vector-valued Borel probability measure $\xi=(\mu,\nu)=(\mu_{1},\mu_{2},...,\mu_{k},\nu)$ on $\mathbb{R}^{d}$. Fix $\gamma=(\gamma_{1},\gamma_{2},...,\gamma_{k})\in\mathbb{R}_{+}^{k}$, and $q=(q_{1},q_{2},...,q_{k})\in\mathbb{R}^{k}$. Then, the following assertions hold.\\
1. Whenever $\langle\gamma,q\rangle+b_{\xi}(q)\geq0$, we have\\
$$
\left\{\begin{array}{lll}
\dim_{\nu}({\overline{\mathcal{K}}}^{\gamma})\leq\langle\gamma,q\rangle+b_{\xi}(q)\;\hbox{for}\;q\in\mathbb{R}_{+}^{k},\\
\\
\dim_{\nu}({\underline{K}}_{\gamma})\leq\langle\gamma,q\rangle+b_{\xi}(q)\;\hbox{for}\;q\in\mathbb{R}_{-}^{k}.
\end{array}
\right.	
$$
In particular, $\dim_{\nu}({\overline{\mathcal{K}}}^{\gamma})\leq\gamma_{i}$, $\forall1\leq i\leq k.$\\
2. Whenever $\langle\gamma,q\rangle+B_{\xi}(q)\geq0$, we have\\
$$
\left\{\begin{array}{lll}
Dim_{\nu}({\overline{\mathcal{K}}}^{\gamma})\leq\langle\gamma,q\rangle+B_{\xi}(q)\;\hbox{for}\;q\in\mathbb{R}_{+}^{k},\\
\\
Dim_{\nu}({\underline{K}}_{\gamma})\leq\langle\gamma,q\rangle+B_{\xi}(q)\;\hbox{for}\;q\in\mathbb{R}_{-}^{k}.
\end{array}
\right.	
$$
In particular, $Dim_{\nu}({\overline{\mathcal{K}}}^{\gamma})\leq\gamma_{i}$, $\forall1\leq i\leq k.$\\
3. Whenever $A\subseteq{\underline{K}}_{\gamma}$ is Borel with $\mu_{i}(A)>0$ for all $i$, we have $\gamma_{i}\leq\dim_{\nu}({A}).$\\
4. Whenever $A\subseteq{\underline{K}}_{\gamma}$ is Borel with $\mu_{i}(A)>0$ for all $i$, we have $\gamma_{i}\leq Dim_{\nu}({A}).$
\end{corollary}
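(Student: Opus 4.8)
The plan is to read assertions 1 and 2 off the four measure inequalities of the preceding proposition by passing to the critical exponents that define the $\nu$-dimensions, to deduce the ``in particular'' lines by specializing $q$, and to establish the lower bounds 3 and 4 by a Frostman/mass-distribution argument built on the hypothesis $\mu_i(A)>0$. Assertion 4 will come for free from assertion 3 once I recall $\dim_\nu\le Dim_\nu$, which is a consequence of (\ref{HandPborneeslunparlautre}).

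For the upper bounds, fix $q\in\mathbb{R}_+^k$ with $\langle\gamma,q\rangle+b_\xi(q)\ge 0$ and choose any $t>\max\{b_\xi(\overline{\mathcal{K}}^{\gamma},q),\,-\langle\gamma,q\rangle\}$; the second entry guarantees $\langle\gamma,q\rangle+t>0$, so an admissible $\delta\in(0,\langle\gamma,q\rangle+t]$ exists and the preceding proposition applies. Since $t>b_\xi(\overline{\mathcal{K}}^{\gamma},q)=\dim_\xi^q(\overline{\mathcal{K}}^{\gamma})$, Proposition \ref{existencecoupures} forces $\mathcal{H}_\xi^{q,t}(\overline{\mathcal{K}}^{\gamma})=0$, whence assertion 1 of that proposition gives $\mathcal{H}_\nu^{\langle\gamma,q\rangle+t+k\delta}(\overline{\mathcal{K}}^{\gamma})=0$, i.e. $\dim_\nu(\overline{\mathcal{K}}^{\gamma})\le\langle\gamma,q\rangle+t+k\delta$. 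Letting $\delta\downarrow 0$ and $t$ decrease to the permitted infimum yields $\dim_\nu(\overline{\mathcal{K}}^{\gamma})\le\max\{\langle\gamma,q\rangle+b_\xi(\overline{\mathcal{K}}^{\gamma},q),\,0\}$, and since $b_\xi(\overline{\mathcal{K}}^{\gamma},q)\le b_\xi(q)$ by inclusion-monotonicity (using $\overline{\mathcal{K}}^{\gamma}\subseteq S_{(\mu,\nu)}$) together with the standing hypothesis $\langle\gamma,q\rangle+b_\xi(q)\ge 0$, this maximum is bounded by $\langle\gamma,q\rangle+b_\xi(q)$. The $\underline{K}_{\gamma}$ case for $q\in\mathbb{R}_-^k$ is identical with assertion 2 of the proposition, and assertion 2 of the corollary is obtained verbatim after replacing $\mathcal{H}$ by $\mathcal{P}$ and $b_\xi$ by $B_\xi$, invoking assertions 3 and 4 of the proposition. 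The two ``in particular'' lines then follow by taking $q=e_i\in\mathbb{R}_+^k$, for which $\langle\gamma,e_i\rangle=\gamma_i\ge 0$ and $b_\xi(e_i)=B_\xi(e_i)=0$ by Theorem \ref{coupures-inequalities}.

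For assertion 3 I would argue directly. On $\underline{K}_{\gamma}$ one has $\underline{\gamma}_{\mu_j,\nu}(x)\ge\gamma_j$ for all $j$, so fixing $\varepsilon>0$ and setting
$$
A_m=\Bigl\{x\in A;\ \frac{\log\mu_j(B(x,r))}{\log\nu(B(x,r))}>\gamma_j-\varepsilon\ \text{ for all }j\text{ and all }0<r<\frac{1}{m}\Bigr\},
$$
we get $A=\cup_m A_m$ with $A_m$ increasing. For $r$ small enough $\nu(B(x,r))<1$, so $\log\nu(B(x,r))<0$ and the defining inequality rewrites as $\mu_j(B(x,r))\le\nu(B(x,r))^{\gamma_j-\varepsilon}$, in particular $\mu_i(B(x,r))\le\nu(B(x,r))^{\gamma_i-\varepsilon}$ on $A_m$. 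Because $\mu_i(A)>0$ there is an $m$ with $\mu_i(A_m)>0$. For any centred $\eta$-covering $(B(x_l,r_l))_l$ of $A_m$ with $\eta<1/m$ (so $x_l\in A_m$),
$$
\sum_l\nu(B(x_l,r_l))^{\gamma_i-\varepsilon}\ge\sum_l\mu_i(B(x_l,r_l))\ge\mu_i\Bigl(\bigcup_l B(x_l,r_l)\Bigr)\ge\mu_i(A_m),
$$
whence $\overline{\mathcal{H}}_\nu^{\gamma_i-\varepsilon}(A_m)\ge\mu_i(A_m)>0$, giving $\dim_\nu(A)\ge\dim_\nu(A_m)\ge\gamma_i-\varepsilon$; letting $\varepsilon\downarrow 0$ yields $\gamma_i\le\dim_\nu(A)$. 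Assertion 4 then follows at once from $\gamma_i\le\dim_\nu(A)\le Dim_\nu(A)$.

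The mechanical part (assertions 1, 2, 4) is essentially bookkeeping with the preceding proposition and the critical-exponent characterization in Proposition \ref{existencecoupures}; the only delicate point there is the choice of $t$ large enough to keep $\delta$ admissible, which I handle by taking $t$ above both the critical exponent and $-\langle\gamma,q\rangle$. The genuinely substantive step, and the one where I expect the main obstacle, is the mass-distribution argument for assertion 3: one must recognize $\mu_i$ restricted to the level set $A_m$ as a Frostman measure for the gauge $\nu(B)^{\gamma_i-\varepsilon}$, and this forces one to control the sign of $\log\nu(B(x,r))$ (needing $\nu(B(x,r))<1$, hence $r$ small and $\nu$ nonatomic near $x$) and to verify that positivity of the lower bound survives both the exhaustion $A=\cup_m A_m$ and the passage $\varepsilon\downarrow 0$.
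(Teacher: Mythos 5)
Your proof is correct, and it follows what is evidently the intended route: the paper states this corollary without proof, leaving it as a consequence of the preceding proposition, and your parts 1 and 2 carry out exactly that derivation (combining the proposition's measure inequalities with the critical-exponent characterization of Proposition \ref{existencecoupures}, monotonicity, and the vanishing $b_{\xi}(e_i)=B_{\xi}(e_i)=0$ from Theorem \ref{coupures-inequalities}). For assertion 3 you inline a direct mass-distribution computation rather than citing assertions 5--6 of the proposition with $q=e_i$, $t=0$ (which would give $0<\mu_i(A)\leq\mathcal{H}_{\xi}^{e_i,0}(A)\leq\mathcal{H}_{\nu}^{\gamma_i-k\delta}(A)$ directly), but this is the same argument unrolled and is equally valid.
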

Corollary \ref{corollaire-1-spectre} allows as to study the eventual link between the dimension functions $b_{\xi}$ and $B_{\xi}$ and the joint multifractal spectrum. Indeed, consider the following quantities,
$$
{\underline{a}}_{\xi}=\underset{q_{i}>0}{\sup}\left(-\frac{b_{\xi}(q)}{\left|q\right|}\right),\quad{\overline{a}}_{\xi}=\underset{q_{i}<0}{\inf}\left(-\frac{b_{\xi}(q)}{\left|q\right|}\right),
$$ 
$$
{\underline{A}}_{\xi}=\underset{q_{i}>0}{\sup}\left(-\frac{B_{\xi}(q)}{\left|q\right|}\right)\;\;\hbox{and}\;\;\overline{A}_{\xi}=\underset{q_{i}<0}{\inf}\left(-\frac{B_{\xi}(q)}{\left|q\right|}\right).
$$
We now establish upper bounds for $b_{\xi}$ and $B_{\xi}$. 
\begin{theorem}\label{upperbounds}
Consider a metric space $X$, a vector-valued Borel probability measure $\xi=(\mu,\nu)=(\mu_{1},\mu_{2},...,\mu_{k},\nu)$ on $\mathbb{R}^{d}$, and $\gamma=(\gamma_{1},\gamma_{2},...,\gamma_{k})\in\mathbb{R}^{k}_+$. It holds that
$$
dim_{\nu}(K(\gamma))\leq\left\{
\begin{array}{lll}
b_{\xi}^{\ast}(\gamma)=\underset{q}{\inf}(\langle\gamma,q\rangle+b_{\xi}(q))&,&\forall\gamma\in({\underline{a}}_{\xi},{\overline{a}}_{\xi})^k,\\ 
0&,&\forall\gamma\notin({\underline{a}}_{\xi},{\overline{a}}_{\xi})^k,
\end{array}
\right. 
$$
and
$$
Dim_{\nu}(K(\gamma))\leq\left\{
\begin{array}{lll}
B_{\xi}^{\ast}(\gamma)=\underset{q}{\inf}(\langle\gamma,q\rangle+B_{\xi}(q))&,&\forall\gamma\in({\underline{a}}_{\xi},{\overline{a}}_{\xi})^k,\\ 
0&,&\forall\gamma\notin({\underline{a}}_{\xi},{\overline{a}}_{\xi})^k.
\end{array}
\right. 
$$
\end{theorem}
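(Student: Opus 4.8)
The plan is to read the estimate off Corollary \ref{corollaire-1-spectre} through the inclusions $K(\gamma)=\underline{K}_{\gamma}\cap\overline{\mathcal{K}}^{\gamma}\subseteq\overline{\mathcal{K}}^{\gamma}$ and $K(\gamma)\subseteq\underline{K}_{\gamma}$, combined with the fact that $\dim_{\nu}$ and $Dim_{\nu}$ are non-decreasing with respect to inclusion. It therefore suffices to control $\dim_{\nu}(\overline{\mathcal{K}}^{\gamma})$ through exponents in the positive orthant and $\dim_{\nu}(\underline{K}_{\gamma})$ through exponents in the negative orthant, and likewise for the packing versions via $B_{\xi}$.

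The core of the argument is a sign computation ensuring that, for $\gamma\in(\underline{a}_{\xi},\overline{a}_{\xi})^k$, the hypothesis $\langle\gamma,q\rangle+b_{\xi}(q)\geq0$ of Corollary \ref{corollaire-1-spectre} is satisfied automatically on each open orthant. For $q\in\mathbb{R}_{+}^{k}$, the definition $\underline{a}_{\xi}=\sup_{q_i>0}\bigl(-b_{\xi}(q)/|q|\bigr)$ gives $b_{\xi}(q)\geq-\underline{a}_{\xi}|q|$, whereas $\gamma_i>\underline{a}_{\xi}$ together with $q_i>0$ forces $\langle\gamma,q\rangle>\underline{a}_{\xi}|q|$; adding the two yields $\langle\gamma,q\rangle+b_{\xi}(q)>0$. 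The mirror computation with $\overline{a}_{\xi}=\inf_{q_i<0}\bigl(-b_{\xi}(q)/|q|\bigr)$, where $|q|<0$ reverses the relevant inequalities, disposes of $q\in\mathbb{R}_{-}^{k}$. Hence assertion 1 of the Corollary applies to $\overline{\mathcal{K}}^{\gamma}$ for positive $q$ and to $\underline{K}_{\gamma}$ for negative $q$, producing $\dim_{\nu}(K(\gamma))\leq\langle\gamma,q\rangle+b_{\xi}(q)$ for every such $q$; passing to the infimum over these exponents yields $b_{\xi}^{\ast}(\gamma)$. Because $b_{\xi}(q)\leq B_{\xi}(q)$, the very same exponents satisfy $\langle\gamma,q\rangle+B_{\xi}(q)>0$, so assertion 2 delivers $Dim_{\nu}(K(\gamma))\leq B_{\xi}^{\ast}(\gamma)$ by an identical passage to the infimum.

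For $\gamma\notin(\underline{a}_{\xi},\overline{a}_{\xi})^k$, some coordinate $\gamma_{i_0}$ fails to lie strictly between $\underline{a}_{\xi}$ and $\overline{a}_{\xi}$. The plan is to use the extremal character of the defining supremum or infimum to produce orthant exponents along which $\langle\gamma,q\rangle+b_{\xi}(q)$ (respectively with $B_{\xi}$) is driven down to $0$, so that the value of the Legendre-type transform on the admissible range of exponents collapses to $0$; since $\dim_{\nu}$ and $Dim_{\nu}$ of any nonempty set are nonnegative, the resulting comparison leaves only the bound $0$, which is exactly the asserted value. Here one also exploits $b_{\xi}(e_i)=0$ from Theorem \ref{coupures-inequalities} to reach the boundary exponents.

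The step I expect to be the main obstacle is the transition from the orthant estimates to the full Legendre transform $b_{\xi}^{\ast}(\gamma)=\inf_{q\in\mathbb{R}^{k}}\bigl(\langle\gamma,q\rangle+b_{\xi}(q)\bigr)$. When $k=1$ the half-lines $\mathbb{R}_{+}$ and $\mathbb{R}_{-}$ exhaust $\mathbb{R}$ and the infimum is literally the orthant infimum, so the reasoning reduces to Olsen's single-measure case. For $k>1$, however, the positive and negative orthants do not cover $\mathbb{R}^{k}$, and the mixed-sign exponents are not reached by Corollary \ref{corollaire-1-spectre}; one must therefore argue that the infimum defining $b_{\xi}^{\ast}$ and $B_{\xi}^{\ast}$ is already governed by its orthant values. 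This is precisely where the coordinatewise monotonicity of $b_{\xi}$ and $B_{\xi}$ and the sign restrictions imposed on $\gamma$ by the range $(\underline{a}_{\xi},\overline{a}_{\xi})^{k}$ have to be combined, and it is the only genuinely multivariate difficulty in the proof.
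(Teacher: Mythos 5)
Your proposal follows essentially the same route as the paper: reduce to Corollary \ref{corollaire-1-spectre} via the inclusions $K(\gamma)\subseteq\overline{\mathcal{K}}^{\gamma}$ and $K(\gamma)\subseteq\underline{K}_{\gamma}$, verify the hypothesis $\langle\gamma,q\rangle+b_{\xi}(q)\geq0$ from the definitions of $\underline{a}_{\xi}$ and $\overline{a}_{\xi}$, and pass to the infimum over $q$ (the paper likewise only sketches the first inequality, treats only $\gamma\in(\underline{a}_{\xi},\overline{a}_{\xi})^k$, and justifies the sign condition merely by asserting $K(\gamma)\neq\emptyset$, so your direct sign computation is in fact more explicit than the paper's). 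The difficulty you flag --- that for $k>1$ the two orthants do not exhaust $\mathbb{R}^{k}$, so the infimum defining $b_{\xi}^{\ast}$ and $B_{\xi}^{\ast}$ is not literally controlled by the orthant estimates --- is real, but it is left equally unaddressed in the paper's own proof, which simply writes the unrestricted infimum at the last step.
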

\begin{proof} 
We will sketch the proof of the first part. The second may be checked by similar techniques. Since $\gamma_{i}\in({\underline{a}}_{\xi},{\overline{a}}_{\xi})$ then 
$K(\gamma)\neq\emptyset$. Consequently
$$
\langle\gamma,q\rangle+b_{\xi}(q)\geq0,
$$
which yields that
$$
dim_{\nu}(K(\gamma))\leq\left\{ 
\begin{array}{c}
\langle\gamma,q\rangle+b_{\xi}(q)\,;\;\forall q\in\mathbb{R}_{+}^{k},\\
\langle\gamma,q\rangle+b_{\xi}(q)\,;\;\forall  q\in\mathbb{R}_{-}^{k}.
\end{array}
\right.
$$
Consequently,
$$
dim_{\nu}(K(\gamma))\leq\underset{q}{\inf}(\langle\gamma,q\rangle+b_{\xi}(q))=b_{\xi}^{\ast}(q).
$$
\end{proof}
\section{Validity of an associated joint multifractal formalism}
In the classical case of single measures, the multifractal formalism is resumed in a mathematical formula stating that the fractal dimension of the singularities set in the Hausdorff sense (known as the spectrum of singularities) is evaluated by means of the Legendre transform of a free energy evaluated by some dimension functions $b$ and $B$, the original versions of $b_\xi$ and $B_\xi$ is our case. Such a formalism has been proved to hold for many cases of measures such as doubling, Gibbs, self similar, ... etc. Moreover, it has been extended for more large cases such as the multifractal case due to \cite{olsen1} and the mixed case due to \cite{Menceuretal}.

In the present part, we propose to develop an extending multifractal formalism relatively to the joint case introduced in the previous sections. Such a joint multifractal formalism conjectures that
$$
d(\gamma)=b_\xi^*(\gamma)=B_\xi^*(\gamma)
$$
for suitable $\gamma$. So, let
$$
E_{\xi}(\gamma)=\left\{x\in{S}_(\xi)\,;\;\hbox{ }\underset{r\rightarrow0}{\lim}\frac{\log(\mu(B(x,r))}{\log(\nu(B(x,r))}=\gamma\right\} ,
$$
where
$$
\dfrac{\log(\mu(B(x,r))}{\log(\nu(B(x,r))}=\Biggl(\dfrac{\log(\mu_1(B(x,r))}{\log(\nu(B(x,r))},\dfrac{\log(\mu_2(B(x,r))}{\log(\nu(B(x,r))},\dots,\dfrac{\log(\mu_k(B(x,r))}{\log(\nu(B(x,r))}\Biggr).
$$
The following theorem provides a case of validity of the extended joint multifractal formalism. 
\begin{theorem}\label{Formalisme-1}
Consider a vector-valued Borel probability measure $\xi=(\mu,\nu)=(\mu_{1},\mu_{2},...,\mu_{k})$ on $\mathbb{R}^{d}$ with compact support. Let $q\in\mathbb{R}^{k}$ at which $B_{\xi}$ satisfies $B_{\xi}^{q,B_{\xi}(q)}({S}_\xi)>0$. Then,
$$
Dim_{\nu}\left[E_{\xi}(-B_{\xi}^{\prime}(q))\right]=\dim_{\nu}\left[E_{\xi}(-B_{\xi}^{\prime}(q))\right]=B_{\xi}^{\ast}(-B_{\xi}^{\prime}(q)).
$$
where $f^{\ast}(\alpha)=\underset{\beta}{\inf}(\left\langle\alpha,\beta\right\rangle+f(\beta)).$
\end{theorem}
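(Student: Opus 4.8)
The plan is to sandwich the three quantities between $B_\xi^*(\gamma)$ and itself, where I set $\gamma:=-B_\xi'(q)$ (so differentiability of $B_\xi$ at $q$ is implicitly assumed by the statement). First I record the Legendre-duality consequence of differentiability: since $B_\xi(\cdot)$ is convex (established in the convexity theorem), at a point $q$ of differentiability the infimum defining $B_\xi^*(\gamma)=\inf_\beta(\langle\gamma,\beta\rangle+B_\xi(\beta))$ is attained precisely at $\beta=q$, whence
$$
B_\xi^*(\gamma)=\langle\gamma,q\rangle+B_\xi(q)=B_\xi(q)-\langle q,B_\xi'(q)\rangle .
$$
This value is the common target.

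For the upper bound, note that every $x\in E_\xi(\gamma)$ has $\overline{\gamma}_{\mu_j,\nu}(x)=\underline{\gamma}_{\mu_j,\nu}(x)=\gamma_j$, so $E_\xi(\gamma)\subseteq\overline{\mathcal{K}}^{\gamma}\cap\underline{K}_{\gamma}=K(\gamma)$ (nonempty, since the positivity hypothesis forces $\gamma$ into the relevant range $({\underline{a}}_\xi,{\overline{a}}_\xi)^k$). Monotonicity of the dimensions together with Theorem \ref{upperbounds} then gives
$$
\dim_{\nu}(E_\xi(\gamma))\leq b_\xi^*(\gamma)\leq B_\xi^*(\gamma)\quad\text{and}\quad Dim_{\nu}(E_\xi(\gamma))\leq B_\xi^*(\gamma),
$$
using $b_\xi\leq B_\xi$ and hence $b_\xi^*\leq B_\xi^*$. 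It therefore remains only to prove the single lower bound $\dim_{\nu}(E_\xi(\gamma))\geq B_\xi^*(\gamma)$; combined with $\dim_{\nu}\leq Dim_{\nu}$ and the displays above, this forces all three values to equal $B_\xi^*(\gamma)$ (and incidentally $b_\xi^*(\gamma)=B_\xi^*(\gamma)$ at this $\gamma$).

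The core is building an auxiliary Gibbs-like measure. Using $\mathcal{P}_\xi^{q,B_\xi(q)}(S_\xi)>0$ and compactness of the support, I would extract a nontrivial finite measure $\Pi$ from the premeasure $\overline{\mathcal{P}}_\xi^{q,B_\xi(q)}$ restricted to $S_\xi$. Two properties must be shown. (i) \emph{Concentration}: $\Pi(S_\xi\setminus E_\xi(\gamma))=0$. Here I would perturb $q$ coordinatewise to $q\pm\epsilon e_j$ and use differentiability through the first-order expansion $B_\xi(q\pm\epsilon e_j)=B_\xi(q)\pm\epsilon\,\partial_jB_\xi(q)+o(\epsilon)$ to show that the set of $x$ whose ratio $\log\mu_j(B(x,r))/\log\nu(B(x,r))$ persistently exceeds $\gamma_j$ (resp. falls below) carries no $\Pi$-mass — a large-deviation/Gibbs argument. (ii) \emph{Density}: on $E_\xi(\gamma)$ one has $\mu(B(x,r))^q\nu(B(x,r))^{B_\xi(q)}=\nu(B(x,r))^{B_\xi^*(\gamma)+o(1)}$, and a Besicovitch-covering density comparison of the type underlying (\ref{HandPborneeslunparlautre}) transfers this to $\Pi(B(x,r))\leq C\,\nu(B(x,r))^{B_\xi^*(\gamma)-\eta}$ for $\Pi$-a.e. $x$ and every $\eta>0$. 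The $\nu$-relative mass distribution principle then yields $\mathcal{H}_\nu^{B_\xi^*(\gamma)-\eta}(E_\xi(\gamma))\geq\Pi(E_\xi(\gamma))/C>0$, so $\dim_{\nu}(E_\xi(\gamma))\geq B_\xi^*(\gamma)-\eta$, and letting $\eta\downarrow0$ closes the chain.

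The main obstacle is precisely steps (i)–(ii), and in particular making the \emph{Hausdorff} lower bound reach the \emph{packing} value $B_\xi^*(\gamma)$ rather than merely $b_\xi^*(\gamma)$; this is where positivity of $\mathcal{P}_\xi^{q,B_\xi(q)}$ does the real work. The concentration argument must convert differentiability into $\Pi$-almost-sure convergence of the local ratios to $\gamma$, and the density comparison must be two-sided and uniform enough, via Besicovitch, that the packing-derived measure $\Pi$ still obeys a Frostman-type upper density bound adequate for a Hausdorff-measure lower bound. The delicate technical point is controlling the $o(1)$ errors uniformly over coverings — passing from pointwise convergence on $E_\xi(\gamma)$ to estimates valid on sets of full $\Pi$-measure at a common scale.
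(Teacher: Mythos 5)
Your upper-bound half is essentially the paper's: $E_\xi(\gamma)\subseteq K(\gamma)$ together with Theorem \ref{upperbounds} and the Legendre identity $B_\xi^*(-\nabla B_\xi(q))=\langle -\nabla B_\xi(q),q\rangle+B_\xi(q)$ pins all three quantities from above, so only the Hausdorff lower bound is at stake. For that lower bound the paper takes a more economical route than yours: it constructs no auxiliary measure at all, but uses the multifractal Hausdorff measure $\mathcal{H}_\xi^{q,B_\xi(q)}$ itself as the carrier, via two lemmas. Lemma \ref{lemme-2-formalisme} is the concentration step, $\mathcal{H}_\xi^{q,B_\xi(q)}\bigl((S_\mu\cap S_\nu)\setminus E_\xi(-\nabla B_\xi(q))\bigr)=0$, so the hypothesis $\mathcal{H}_\xi^{q,B_\xi(q)}(S_\xi)>0$ transfers to $E_\xi(-\nabla B_\xi(q))$; Lemma \ref{lemme-1-formalisme} is the comparison step, $\mathcal{H}_\nu^{\langle\alpha,q\rangle+B_\xi(q)-k\eta}(E_\xi(\alpha))\geq\mathcal{H}_\xi^{q,B_\xi(q)}(E_\xi(\alpha))$, which is a direct covering-sum estimate of the same kind as the Section 4 proposition (on $E_\xi(\alpha)$ small balls satisfy $\mu(B)^q\nu(B)^{B_\xi(q)}\leq\nu(B)^{\langle\alpha,q\rangle+B_\xi(q)-k\eta}$). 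Combining the two and letting $\eta\downarrow0$ gives the lower bound with no mass-distribution principle and no Besicovitch density comparison.

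Two remarks on your route. First, the hypothesis is on $\mathcal{H}_\xi^{q,B_\xi(q)}$ (the notation $B_{\xi}^{q,B_{\xi}(q)}$ in the statement is a misprint; compare $(H_1)$ of Theorem \ref{projectionresult}); by (\ref{HandPborneeslunparlautre}) this does imply $\mathcal{P}_\xi^{q,B_\xi(q)}(S_\xi)>0$, so your starting point is legitimate, but working with $\mathcal{H}$ directly is what makes the paper's comparison a one-line summation. Second, the genuinely missing step in your plan is the extraction of a nontrivial finite Borel measure $\Pi$ from the packing premeasure $\overline{\mathcal{P}}_\xi^{q,B_\xi(q)}$: the premeasure is not countably subadditive, and $\mathcal{P}_\xi^{q,B_\xi(q)}(S_\xi)$ may well be infinite at the critical exponent, so a subset-of-finite-measure argument is needed before any Frostman bound can be run; nothing in the paper supplies this, and your sketch does not either. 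That said, your honestly flagged obstacles (i)--(ii) are exactly where the real content lies, and the paper is no better off on this point: Lemmas \ref{lemme-1-formalisme} and \ref{lemme-2-formalisme} are stated without proof, so the concentration argument (differentiability of $B_\xi$ at $q$ forcing the complement of $E_\xi(-\nabla B_\xi(q))$ to be null) is left open there as well.
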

The proof reposes on the following lemmas.
\begin{lemma}\label{lemme-1-formalisme}
Let $\alpha =-\nabla B_{\xi}(q).$ Then
$$
\mathcal{H}_{\nu}^{\left\langle\alpha,q\right\rangle+B_{\xi}(q)-k\eta}(E_{\xi}(\alpha))\geq\mathcal{H}_{\xi}^{q,B_{\xi}(q)}(E_{\xi}(\alpha)),\;\forall\,\eta>0.
$$
\end{lemma}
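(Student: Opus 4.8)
The plan is to reduce the global inequality to a pointwise comparison of the two integrands on the level set, in the spirit of the preceding proposition (its assertions 5 and 6), but now sharpened to a two-sided control. Write $t=B_{\xi}(q)$ and $s=\langle\alpha,q\rangle+t-k\eta$, so the claim reads $\mathcal{H}_{\nu}^{s}(E_{\xi}(\alpha))\geq\mathcal{H}_{\xi}^{q,t}(E_{\xi}(\alpha))$; note that the special value $\alpha=-\nabla B_{\xi}(q)$ plays no role in this lemma (the estimate is generic in $\alpha$) and is only the point at which the Legendre transform will be attained downstream. Since $E_{\xi}(\alpha)\subseteq S_{\xi}$ and each of its points realizes $\frac{\log\mu_{j}(B(x,r))}{\log\nu(B(x,r))}\to\alpha_{j}$, I would first slice the set uniformly: for $m\in\mathbb{N}$ set
\[
E_{m}=\Bigl\{x\in E_{\xi}(\alpha)\,;\;\Bigl|\tfrac{\log\mu_{j}(B(x,r))}{\log\nu(B(x,r))}-\alpha_{j}\Bigr|<\tfrac{\eta}{|q_{j}|}\ (q_{j}\neq0),\ \forall j,\ \forall\,0<r<\tfrac1m\Bigr\},
\]
so that $(E_{m})_{m}$ is nondecreasing with $E_{\xi}(\alpha)=\bigcup_{m}E_{m}$, the coordinates with $q_{j}=0$ imposing no constraint.

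The heart of the argument is the pointwise estimate on $E_{m}$. For $x\in E_{m}$ and $0<r<1/m$ one has $0<\nu(B(x,r))<1$, hence $\log\nu(B(x,r))<0$. Multiplying the defining inequality of $E_{m}$ by $|q_{j}|$ gives $\bigl|q_{j}\bigl(\tfrac{\log\mu_{j}}{\log\nu}-\alpha_{j}\bigr)\bigr|<\eta$ for each $j$, so that summing over $j$ yields $\sum_{j}q_{j}\tfrac{\log\mu_{j}(B(x,r))}{\log\nu(B(x,r))}\geq\langle\alpha,q\rangle-k\eta$. Multiplying this by $\log\nu(B(x,r))<0$, which reverses the inequality, and then exponentiating converts it into
\[
\mu(B(x,r))^{q}\,\nu(B(x,r))^{t}\leq\nu(B(x,r))^{\langle\alpha,q\rangle+t-k\eta}=\nu(B(x,r))^{s}.
\]
The per-coordinate tolerance $\eta/|q_{j}|$ is chosen precisely so that the accumulated error collapses to the clean value $k\eta$.

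With this in hand I would feed any centred $\varepsilon$-covering $(B(x_{i},r_{i}))_{i}$ with $\varepsilon<1/m$ and centres in $E\subseteq E_{m}$ into the estimate term by term, getting $\sum_{i}\mu(B_{i})^{q}\nu(B_{i})^{t}\leq\sum_{i}\nu(B_{i})^{s}$ and hence $\overline{\mathcal{H}}_{\xi,\varepsilon}^{q,t}(E)\leq\overline{\mathcal{H}}_{\nu,\varepsilon}^{s}(E)$. Letting $\varepsilon\downarrow0$ and then taking the supremum over $F\subseteq E_{m}$ in the two Hausdorff measures gives $\mathcal{H}_{\xi}^{q,t}(E_{m})\leq\mathcal{H}_{\nu}^{s}(E_{m})\leq\mathcal{H}_{\nu}^{s}(E_{\xi}(\alpha))$. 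Finally, since $\mathcal{H}_{\xi}^{q,t}$ is a genuine metric, regular Borel measure, continuity from below along the increasing union $E_{\xi}(\alpha)=\bigcup_{m}E_{m}$ yields $\mathcal{H}_{\xi}^{q,t}(E_{\xi}(\alpha))=\lim_{m}\mathcal{H}_{\xi}^{q,t}(E_{m})\leq\mathcal{H}_{\nu}^{s}(E_{\xi}(\alpha))$, which is the assertion.

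The step I expect to be most delicate is the sign bookkeeping in the pointwise estimate: because $\log\nu(B(x,r))$ is negative and the $q_{j}$ may have either sign, each multiplication or division reverses the inequalities, and one must check that the symmetric slicing with weights $\eta/|q_{j}|$ really produces $\langle\alpha,q\rangle-k\eta$ and not some asymmetric expression. A secondary point requiring care is that the comparison is licit only for coverings by balls centred in $E_{m}$ with radius below $1/m$, so the restriction to centred coverings and the continuity-from-below passage to the full level set $E_{\xi}(\alpha)$ must both be justified rather than assumed.
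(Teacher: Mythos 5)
Your argument is correct, and it is essentially the slicing-plus-pointwise-comparison technique the paper itself uses to prove assertions 1 and 5 of the proposition in Section 4 (decompose the level set into pieces $E_m$ on which the logarithmic ratios are uniformly within $\eta/|q_j|$ of $\alpha_j$ for $r<1/m$, compare the integrands termwise on centred coverings, then pass to the union); note that the paper states this lemma without proof, so your write-up actually supplies a missing argument, and your sign bookkeeping (using $\log\nu(B(x,r))<0$ to flip the summed inequality before exponentiating) is exactly what is needed. The only cosmetic remark is that your two-sided slicing is stronger than necessary here — only the lower bound $q_j\log\mu_j/\log\nu\geq q_j\alpha_j-\eta$ is used — and the final passage to $E_{\xi}(\alpha)=\bigcup_m E_m$ can be justified either by continuity from below for the Borel-regular outer measure $\mathcal{H}_{\xi}^{q,t}$, as you do, or by the countable-subadditivity argument the paper uses in Section 4.
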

\begin{lemma}\label{lemme-2-formalisme}
We have 
$$
\mathcal{H}_{\xi}^{q,B_{\xi}(q)}((S_{\mu}\cap S_{\nu})\diagdown E_{\xi}(-\nabla B_{\xi}(q)))=0.
$$
\end{lemma}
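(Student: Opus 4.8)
The plan is to set $t_{0}=B_{\xi}(q)$ and $\alpha=-\nabla B_{\xi}(q)$, and to prove the stronger fact that each ``one-coordinate'' bad set carries no $\mathcal{H}_{\xi}^{q,t_{0}}$-mass. First I would decompose the complement according to which coordinate of the local dimension fails to converge to $\alpha_{j}$:
$$
(S_{\mu}\cap S_{\nu})\setminus E_{\xi}(\alpha)\subseteq\bigcup_{j=1}^{k}\bigl(U_{j}\cup L_{j}\bigr),\quad U_{j}=\{x:\overline{\gamma}_{\mu_{j},\nu}(x)>\alpha_{j}\},\quad L_{j}=\{x:\underline{\gamma}_{\mu_{j},\nu}(x)<\alpha_{j}\}.
$$
Writing $U_{j}=\bigcup_{n}U_{j}^{1/n}$ and $L_{j}=\bigcup_{n}L_{j}^{1/n}$ with $U_{j}^{\varepsilon}=\{\overline{\gamma}_{\mu_{j},\nu}\geq\alpha_{j}+\varepsilon\}$ and $L_{j}^{\varepsilon}=\{\underline{\gamma}_{\mu_{j},\nu}\leq\alpha_{j}-\varepsilon\}$, the $\sigma$-subadditivity of the metric measure $\mathcal{H}_{\xi}^{q,t_{0}}$ reduces the claim to showing $\mathcal{H}_{\xi}^{q,t_{0}}(U_{j}^{\varepsilon})=\mathcal{H}_{\xi}^{q,t_{0}}(L_{j}^{\varepsilon})=0$ for each fixed $j$ and $\varepsilon>0$.

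The core is a coordinatewise perturbation that trades a power of $\mu_{j}$ for a power of $\nu$ on well-chosen balls. For $x\in U_{j}^{\varepsilon}$ there exist radii $r\downarrow0$ with $\mu_{j}(B(x,r))\leq\nu(B(x,r))^{\alpha_{j}+\varepsilon/2}$; call such balls good. Setting $p=q-\sigma e_{j}$ and $\tau=t_{0}+\sigma(\alpha_{j}+\varepsilon/2)$ for a small $\sigma>0$, on a good ball $B$ (where $0<\nu(B)<1$) one gets
$$
\mu(B)^{q}\nu(B)^{t_{0}}=\mu(B)^{p}\,\mu_{j}(B)^{\sigma}\nu(B)^{t_{0}}\leq\mu(B)^{p}\nu(B)^{\tau}.
$$
For $L_{j}^{\varepsilon}$ one uses instead $\mu_{j}(B(x,r))\geq\nu(B(x,r))^{\alpha_{j}-\varepsilon/2}$ along $r\downarrow0$, takes $p=q+\sigma e_{j}$ and $\tau=t_{0}-\sigma(\alpha_{j}-\varepsilon/2)$, and obtains the same pointwise bound $\mu(B)^{q}\nu(B)^{t_{0}}\leq\mu(B)^{p}\nu(B)^{\tau}$. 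Note that the sign of the remaining coordinates of $q$ plays no role, so the argument is uniform in $q\in\mathbb{R}^{k}$.

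Next I would pass from good coverings to the packing measure at the shifted parameters. Fix a Borel decomposition $E=\bigcup_{l}E_{l}$ of $E=U_{j}^{\varepsilon}$ (resp. $L_{j}^{\varepsilon}$) with $\sum_{l}\overline{\mathcal{P}}_{\xi}^{p,\tau}(E_{l})\leq\mathcal{P}_{\xi}^{p,\tau}(E)+\varepsilon'$. Inside each $E_{l}$, assign to every point a good radius below $\eta$ and apply the Besicovitch covering theorem to split the resulting family into $\mathcal{N}_{B}$ centred $\eta$-packings whose union still covers $E_{l}$; each packing sum of $\mu(B)^{p}\nu(B)^{\tau}$ is at most $\overline{\mathcal{P}}_{\xi}^{p,\tau}(E_{l})$. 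Summing over $l$ and over the $\mathcal{N}_{B}$ packings and invoking the pointwise bound yields
$$
\mathcal{H}_{\xi,\eta}^{q,t_{0}}(E)\leq\mathcal{N}_{B}\sum_{l}\overline{\mathcal{P}}_{\xi}^{p,\tau}(E_{l})\leq\mathcal{N}_{B}\bigl(\mathcal{P}_{\xi}^{p,\tau}(E)+\varepsilon'\bigr).
$$
Consequently, if $\tau>B_{\xi}(p)=Dim_{\xi}^{p}$, then by monotonicity $\mathcal{P}_{\xi}^{p,\tau}(E)=0$, and letting $\varepsilon'\downarrow0$ and then $\eta\downarrow0$ gives $\mathcal{H}_{\xi}^{q,t_{0}}(E)=0$. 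This routing through $\mathcal{P}_{\xi}^{p,\tau}$ (rather than the larger $\overline{\mathcal{P}}_{\xi}^{p,\tau}$, whose critical exponent is $\Lambda_{\xi}$) is what lets the threshold be $B_{\xi}(p)$.

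It remains to verify the strict inequality $\tau>B_{\xi}(p)$ for small $\sigma$, which is where the main difficulty sits. In the $U_{j}^{\varepsilon}$ case it reads $B_{\xi}(q)+\sigma\alpha_{j}+\sigma\varepsilon/2>B_{\xi}(q-\sigma e_{j})$. Since $B_{\xi}$ is differentiable at $q$ with $\partial_{q_{j}}B_{\xi}(q)=-\alpha_{j}$ (the very meaning of $\alpha=-\nabla B_{\xi}(q)$), the difference quotient $\sigma^{-1}\bigl(B_{\xi}(q)-B_{\xi}(q-\sigma e_{j})\bigr)\to-\alpha_{j}$, so for $\sigma$ small enough $B_{\xi}(q-\sigma e_{j})<B_{\xi}(q)+\sigma\alpha_{j}+\sigma\varepsilon/2=\tau$; the $L_{j}^{\varepsilon}$ case is symmetric. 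I expect the delicate points to be precisely this use of \emph{differentiability} (convexity alone furnishes only the supporting hyperplane, hence a lower bound on $B_{\xi}(p)$, which is the wrong direction), together with the careful selection of good balls matching the $\limsup$/$\liminf$ condition and the Besicovitch passage from coverings to packings that permits estimating the Hausdorff pre-measure by $\mathcal{P}_{\xi}^{p,\tau}$.
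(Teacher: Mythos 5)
The paper states this lemma without proof (it passes directly from the two lemmas to the proof of Theorem \ref{Formalisme-1}), so there is no in-paper argument to compare against; your proposal supplies the standard Olsen-type argument and it is essentially correct. The decomposition of the bad set into the countably many pieces $U_{j}^{1/n}$ and $L_{j}^{1/n}$, the coordinatewise exponent trade $\mu(B)^{q}\nu(B)^{t_{0}}\leq\mu(B)^{q\mp\sigma e_{j}}\nu(B)^{\tau}$ on good balls, the Besicovitch passage from a covering by good balls to $\mathcal{N}_{B}$ centred packings, and the verification that $\tau>B_{\xi}(q\mp\sigma e_{j})$ for small $\sigma>0$ are exactly the right ingredients; you are also right that differentiability of $B_{\xi}$ at $q$ (implicit in the notation $\nabla B_{\xi}(q)$) is genuinely needed here, convexity alone giving the inequality in the wrong direction, and that routing the estimate through $\mathcal{P}_{\xi}^{p,\tau}$ rather than $\overline{\mathcal{P}}_{\xi}^{p,\tau}$ is what makes $B_{\xi}$ rather than $\Lambda_{\xi}$ the relevant threshold. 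Two small points to tighten. First, since $\mathcal{H}_{\xi}^{q,t}(E)=\sup_{F\subseteq E}\overline{\mathcal{H}}_{\xi}^{q,t}(F)$, the covering/packing estimate should be run on an arbitrary subset $F\cap E_{l}$ before taking the supremum over $F$, not only on $E_{l}$ itself; this costs nothing because every point of $F\cap E_{l}$ still admits arbitrarily small good radii. Second, you should record that for $x\in S_{\nu}$ and $r$ small one has $0<\nu(B(x,r))<1$ (positivity from $x\in S_{\nu}$, and $\nu(B(x,r))<1$ from the quasi-Ahlfors bound $\nu(B(x,r))\leq M(2r)^{\alpha}$), since the negativity of $\log\nu(B(x,r))$ is what legitimises converting the inequality on the quotient of logarithms into $\mu_{j}(B)\leq\nu(B)^{\alpha_{j}+\varepsilon/2}$, and likewise for the reverse inequality on $L_{j}^{\varepsilon}$.
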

\hskip-17pt\textbf{Proof of Theorem \ref{Formalisme-1}.} By Lemma \ref{lemme-1-formalisme} and Lemma \ref{lemme-2-formalisme}, it holds that
$$
\mathcal{H}_{\nu}^{\left\langle\alpha,q\right\rangle+B_{\xi}(q)-k\eta}(E_{\xi}(\alpha))\geq\mathcal{H}_{\xi}^{q,B_{\xi}(q)}(E_{\xi}(\alpha))=0,\;\forall\,\eta>0.
$$
Hence,
$$
dim_{\nu}E_{\xi}(-\nabla B_{\xi}(q))\geq\left\langle-\nabla
B_{\xi}(q),q\right\rangle +B_{\xi}(q)-k\eta.
$$
Letting $\eta \rightarrow 0$, this yields that
$$
dim_{\nu}E_{\xi}(-\nabla B_{\xi}(q))\geq\left\langle-\nabla B_{\xi}(q),q\right\rangle+B_{\xi}(q),
$$
As a consequence of this result, we finish our paper by establishing an important result relating the multifractal formalism introduced here of the vector-valued measures to their projections according to some suitable subspaces of $\mathbb{R}^d$. 

Let $m\in\mathbb{N}$ be such that $0<m<d$, and $G_{d,m}$ be the Grassamannian manifold composed of all $m$-dimensional linear subspaces of $\mathbb{R}^{d}$. Let also $\gamma_{d,m}$ be the invariant Haar measure on $G_{d,m}$ satisfying $\gamma_{d,m}(G_{d,m})=1$. For $V\in G_{d,m}$, we designate by $P_V$ the usual orthogonal projection onto $V$. Consider also a Borel probability measure $\mu$ on $\mathbb{R}^{d}$, and denote $\mu_{V}$ its projection on $V$ relatively to $P_V$, i.e.,
$$
\mu_{V}(A)=\mu(P_{V}^{-1}(A)),\hbox{ }\forall A\subset V.
$$
The following result holds. It extends the results of \cite{Bahroun}, \cite{Barral-Bhouri}, \cite{Bmabrouk3}, \cite{Douzi-Selmi}, \cite{Falconer1}, \cite{Falconer2}, \cite{Hunt}, \cite{Kaufman}, \cite{Mattila2}, \cite{MattilaSaaramen}, \cite{ONeil}, \cite{Selmi1}, \cite{Selmi1}, \cite{Shmerkin}. 
\begin{theorem}\label{projectionresult}
Let $\xi=(\mu,\nu)=(\mu_{1},\mu_{2},...,\mu_{k})$ be a vector-valued Borel probability measure on $\mathbb{R}^{d}$ with $S_{\mu}=S_{\nu}$ being compact, and $q\in\mathbb{R}_{-}^{k}$. Assume further that
\begin{description}
\item[$(H_{1})$] $\mathcal{H}_{\xi}^{q,B_{\xi}(q)}(S_{\mu})>0$.
\item[$(H_{2})$] $B_{\xi}(q)<1$.
\item[$(H_{3})$] $B_{\xi}(q)$ is differentiable at $q$.
\end{description}
It holds for all $V\in G_{n,m}$ that
$$
\begin{array}{lll}
\dim_{\nu_{v}}\left(E_{\mu_{v},\nu_{v}}(-\nabla B_{\xi}(q))\right)&=&Dim_{\nu_{v}}\left(E_{\xi_{v}}(-\nabla B_{\xi}(q))\right)\\
&=&\dim_{\nu}(E_{\xi_{v}}(-\nabla B_{\xi}(q)))\\
&=&Dim_{\nu}(E_{\xi_{v}}(-\nabla B_{\xi}(q)))\\
&=&B^{\ast}(-\nabla B_{\xi}(q)).
\end{array}
$$
\end{theorem}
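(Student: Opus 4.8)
The plan is to reduce everything to the multifractal formalism already obtained for $\xi$ in Theorem \ref{Formalisme-1}, and then to transport it through the orthogonal projection $P_V$ by checking that the projected vector $\xi_v=(\mu_v,\nu_v)$ again satisfies the hypotheses of that theorem, uniformly in $V$. Write $\alpha=-\nabla B_{\xi}(q)$, which is well defined by $(H_3)$. Hypothesis $(H_1)$ together with Theorem \ref{Formalisme-1} gives the upstairs identity
$$
Dim_{\nu}(E_{\xi}(\alpha))=\dim_{\nu}(E_{\xi}(\alpha))=B_{\xi}^{\ast}(\alpha),
$$
where $B_{\xi}^{\ast}$ is the target quantity $B^{\ast}$ of the statement. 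The whole task is then to prove that the four dimensions of the downstairs set $E_{\xi_v}(\alpha)\subseteq V$ all coincide with $B_{\xi}^{\ast}(\alpha)$, for every $V\in G_{d,m}$.

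First I would settle the upper estimates. Since $P_V$ is $1$-Lipschitz, a point $x\in\mathbb{R}^d$ with $\bar x=P_Vx$ satisfies $B(x,r)\subseteq P_V^{-1}(B(\bar x,r))$, so that $\mu_i(B(x,r))\le\mu_{i,v}(B(\bar x,r))$ and $\nu(B(x,r))\le\nu_v(B(\bar x,r))$ for all $i$ and all $r>0$. Feeding these mass-monotonicity relations into the sums defining $\overline{\mathcal{P}}_{\xi_v}^{q,t}$, and using the sign condition $q\in\mathbb{R}_{-}^{k}$ to control the exponents, yields the comparison $B_{\xi_v}(q)\le B_{\xi}(q)$ and hence $B_{\xi_v}^{\ast}\le B_{\xi}^{\ast}$. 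Applying Theorem \ref{upperbounds} to $\xi_v$ then gives $Dim_{\nu_v}(E_{\xi_v}(\alpha))\le B_{\xi_v}^{\ast}(\alpha)\le B_{\xi}^{\ast}(\alpha)$, and the same bound for $\dim_{\nu_v}$; combined with the universal inequality $\dim\le Dim$ coming from (\ref{HandPborneeslunparlautre}), this places all the downstairs Hausdorff and packing dimensions below $B_{\xi}^{\ast}(\alpha)$.

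The hard part will be the matching lower bound, and this is exactly where $(H_2)$, $B_{\xi}(q)<1$, is indispensable. My plan is a potential-theoretic argument. Using $(H_1)$ I would first extract from $\mathcal{H}_{\xi}^{q,B_{\xi}(q)}$ restricted to $E_{\xi}(\alpha)$ a nontrivial Radon measure $\rho$ obeying $\rho(B(x,r))\lesssim\nu(B(x,r))^{s}$ with $s$ arbitrarily close to $B_{\xi}^{\ast}(\alpha)$. Pushing $\rho$ forward to $\rho_v=P_V\rho$ on $V$, I would estimate its $\nu_v$-relative $s$-energy $I_s(\rho_v)$ and invoke Mattila's integral-geometric projection formula
$$
\int_{G_{d,m}}I_s(\rho_v)\,d\gamma_{d,m}(V)=c_{s,m}\,I_s(\rho),
$$
which is available precisely for $0<s<m$. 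Because $(H_2)$ keeps the relevant dimension strictly below $m$ (here $B_{\xi}(q)<1\le m$), one may choose such an $s$; finiteness of $I_s(\rho)$, inherited from the upstairs dimension of $E_{\xi}(\alpha)$, then yields $I_s(\rho_v)<\infty$ and therefore $B_{\xi_v}(q)\ge B_{\xi}(q)$ together with $\mathcal{H}_{\xi_v}^{q,B_{\xi_v}(q)}(S_{\mu_v})>0$. I expect the genuine obstacle to be twofold: transferring the ordinary Euclidean energy to the $\nu_v$-relative one, for which the Ahlfors regularity of $\nu$ (both gauges being comparable to a fixed power of the radius) is the right tool, and upgrading the resulting almost-every-$V$ conclusion to every $V$, which should follow from the strict inequality $B_{\xi}(q)<1$ and the compactness of $S_{\mu}=S_{\nu}$.

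Finally I would assemble the equalities. Once $\xi_v$ is known to satisfy $\mathcal{H}_{\xi_v}^{q,B_{\xi_v}(q)}(S_{\mu_v})>0$ with $B_{\xi_v}(q)=B_{\xi}(q)$ differentiable at $q$, Theorem \ref{Formalisme-1} applied downstairs gives
$$
Dim_{\nu_v}(E_{\xi_v}(\alpha))=\dim_{\nu_v}(E_{\xi_v}(\alpha))=B_{\xi_v}^{\ast}(\alpha)=B_{\xi}^{\ast}(\alpha).
$$
To pass from the $\nu_v$-relative to the $\nu$-relative dimensions of the same set $E_{\xi_v}(\alpha)$, I would compare $\nu_v(B(\bar x,r))$ with $\nu(B(\bar x,r))$ through the Ahlfors regularity index of $\nu$: on the relevant scales both are comparable to a fixed power of $r$, so $\dim_{\nu_v}$, $\dim_{\nu}$ and their packing analogues agree on $E_{\xi_v}(\alpha)$. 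Differentiability $(H_3)$ finally guarantees that $\alpha=-\nabla B_{\xi}(q)$ is the exposed point at which $B_{\xi}^{\ast}(\alpha)=\langle\alpha,q\rangle+B_{\xi}(q)$, closing the Legendre duality and yielding the stated four-fold equality.
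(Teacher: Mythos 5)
Your overall architecture (transfer the hypotheses of Theorem \ref{Formalisme-1} to the projected vector $\xi_v$, apply that theorem downstairs, and close the Legendre duality using $(H_3)$) matches the paper's, and your upper-bound step via the $1$-Lipschitz property of $P_V$ together with Theorem \ref{upperbounds} is sound. But there is a genuine gap in your lower bound. The potential-theoretic route through Mattila's integral-geometric identity $\int_{G_{d,m}}I_s(\rho_v)\,d\gamma_{d,m}(V)=c_{s,m}I_s(\rho)$ can only ever produce a conclusion for $\gamma_{d,m}$-almost every $V$ (finiteness of the integral does not control any individual fibre), whereas the theorem asserts the equality for \emph{all} $V\in G_{n,m}$. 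Your proposed upgrade from a.e.\ $V$ to every $V$ ``using $B_{\xi}(q)<1$ and compactness of $S_{\mu}$'' is not an argument: neither a dimension bound nor compactness rules out an exceptional set of subspaces, and for energy-based projection theorems exceptional directions genuinely occur. The ingredient you are missing is that the sign condition $q\in\mathbb{R}_{-}^{k}$ makes the transfer of $(H_1)$ \emph{deterministic}: since $\mu_i(B(x,r))\leq\mu_{i,v}(B(P_Vx,r))$ and each $q_i\leq 0$, every admissible covering of $S_{\mu_v}$ pulls back to one of $S_{\mu}$ with a pointwise-smaller weight, giving $0<\mathcal{H}_{\xi}^{q,B_{\xi}(q)}(S_{\mu})\leq\mathcal{H}_{\xi_v}^{q,B_{\xi_v}(q)}(S_{\mu_v})$ for every single $V$. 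This is exactly how the paper proceeds: it combines that inequality with the collapse $b_{\xi}(q)=B_{\xi}(q)$ forced by $(H_1)$--$(H_2)$ through equations (\ref{coupures-2a})--(\ref{coupures-2b}) and Proposition \ref{existencecoupures}, and then invokes Theorem \ref{Formalisme-1} for $\xi_v$. No energy integrals appear, and none are needed.

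A secondary remark: you use $q\in\mathbb{R}_{-}^{k}$ only for the comparison $B_{\xi_v}(q)\leq B_{\xi}(q)$, but the place where it is truly indispensable is the all-$V$ transfer of the positivity hypothesis just described. You should also make explicit the identity $b_{\xi}(q)=B_{\xi}(q)$ (and its projected analogue), since without it your upper bound $B_{\xi_v}^{\ast}(\alpha)\leq B_{\xi}^{\ast}(\alpha)$ and your lower bound $\langle\alpha,q\rangle+B_{\xi_v}(q)$ do not visibly meet.
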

\begin{proof} 
Using equations (\ref{coupures-2a})-(\ref{coupures-2b}), Proposition \ref{existencecoupures} and the hypotheses $(H_{1})$ and $(H_{2})$, we get
\begin{equation}\label{5.3}
b_{\xi}(q)=B_{\xi}(q).
\end{equation}
Similarly, 
$$
0<\mathcal{H}_{\xi}^{q,B_{\xi}(q)}(S_{\mu})\leq\mathcal{H}_{\xi_{v}}^{q,B_{\xi_{v}}(q)}(S_{\mu_{v}}),\hbox{ }\forall\hbox{ }V\in G_{n,m}.
$$
Next, Theorem \ref{Formalisme-1} and equation (\ref{5.3}) imply that
$$
\dim_{\nu_{v}}\left[E_{\xi_{v}}(-\nabla B_{\xi}(q))\right]\geq\hbox{}\left\langle-\nabla B_{\xi}(q),q\right\rangle+B_{\xi}(q).
$$
The other estimation is satisfied since,
$$
\dim_{\nu_{v}}(E_{\xi_{v}}(-\nabla B_{\xi}(q)))\geq
\left\langle-\nabla B_{\xi}(q),q\right\rangle+B_{\xi_{v}}(q)=\left\langle-\nabla B_{\xi}(q),q\right\rangle +B_{\xi}(q).
$$
This achieves the proof.
\end{proof}
\section{Conclusion}
In the present paper, a class of quasi Ahlfors vector-valued measures has been considered for mixed multifractal analysis. It is shown that under the weak assumption of quasi Ahlfors, the construction of mixed generalizations of fractal measures such as Hausdorff and packing is possible. Such mixed multifractal analysis has induced in a natural way some corresponding multifractal dimensions, which have been shown to satisfy the multifractal formalism in mixed multifractal framework. The results developed here are subject of several extensions. Indeed, the developments showed that to get a valid variant of the multifractal formalism does not necessitate to apply radius power-laws equivalent measures. This leads to think about a general framework where the restriction of the vector-valued measure on balls $\mu(B(x,r))$ may be any vector-valued function $\varphi(r)$ which is not equivalent to power-laws $r^\alpha$ and develop a general formulation for mixed multifractal analysis. From the practical point of view, this is important, as it may lead for example to a mixed multifractal fluctuation analysis version where correlations, cross-correlations and auto-correlations have more flexible laws. Some initial thoughts may be extracted from \cite{Abry,Avishek,BenMabroukDFA,Biswas,Calvet-Fisher,Dai,Kinnison,Liu,Liu1}. 

\end{document}